\newtheorem{theorem}{Theorem}[section]
\newtheorem{lemma}[theorem]{Lemma}
\newtheorem{corollary}[theorem]{Corollary}
\theoremstyle{definition}
\newtheorem{remark}[theorem]{Remark}
\newtheorem{example}[theorem]{Example}
\newtheorem{conjecture}[theorem]{Conjecture}
\renewcommand{\leq}{\leqslant}
\renewcommand{\geq}{\geqslant}
\newcommand{\eps}{\epsilon}
\newcommand\op[1]{\operatorname{#1}}
\def\P{\mathbf{P}}
\def\E{\mathbf{E}}
\def\Var{\op{Var}}
\def\ord{\op{ord}}
\def\fix{\op{fix}}
\def\calC{\mathcal{C}}
\def\ordn{\ord S_n}
\def\Poisson{\op{Poisson}}
\newcommand\floor[1]{\left\lfloor{#1}\right\rfloor}
\newcommand\pfrac[2]{\left(\frac{#1}{#2}\right)}
\begin{document}

\title{Random generation with cycle type restrictions}
\author{Sean Eberhard}
\address{Sean Eberhard, London, UK}
\email{eberhard.math@gmail.com}

\author{Daniele Garzoni}
\address{Daniele Garzoni, Universit\`a degli Studi di Padova, Dipartimento di Matematica ``Tullio Levi-Civita''}
\email{daniele.garzoni@phd.unipd.it}
\maketitle

\begin{abstract}
We study random generation in the symmetric group when cycle type restrictions are imposed. Given $\pi, \pi' \in S_n$, we prove that $\pi$ and a random conjugate of $\pi'$ are likely to generate at least $A_n$ provided only that $\pi$ and $\pi'$ have not too many fixed points and not too many $2$-cycles. As an application, we investigate the following question: For which positive integers $m$ should we expect two random elements of order $m$ to generate $A_n$? Among other things, we give a positive answer for any $m$ having any divisor $d$ in the range $3 \leq d \leq o(n^{1/2})$.
\end{abstract}

\section{Introduction}

Fix two conjugacy classes $\calC, \calC'$ of the symmetric group $S_n$. Should we expect random elements $\pi\in\calC, \pi'\in\calC'$ to generate at least $A_n$?

Clearly not if $\calC$ and $\calC'$ both represent elements with many fixed points, for then it is likely that $\pi$ and $\pi'$ have a common fixed point, and therefore cannot possibly generate $A_n$. Also not if $\calC$ and $\calC'$ both represent elements with many $2$-cycles: if both $\calC$ and $\calC'$ represent elements with $\Omega(n)$ $2$-cycles, then it is not hard to see that the probability that $\pi$ and $\pi'$ have a common $2$-cycle is bounded away from zero.

The purpose of this paper is to prove that, apart from these two basic obstructions, yes we should expect $\pi, \pi'$ to generate at least $A_n$. The following is our main theorem.
\begin{theorem}\label{main-thm}
Let $\calC, \calC' \subset S_n$ be fixed conjugacy classes. For each $j$ let $c_j$ be the number of $j$-cycles in a representative element of $\calC$, and similarly $c'_j$ for $\calC'$. Let $\pi\in\calC, \pi'\in\calC'$ be chosen uniformly at random. Assume that $c_1, c'_1 = o(n^{2/3})$. Then

\begin{enumerate}
    \item $\P(\langle \pi, \pi' \rangle \geq A_n) = 1 - o(1)$ if and only if
    \[
      (c_1 + c_2^{1/2}) ({c'_1} + {c'_2}^{1/2}) = o(n) ;
    \]
    \item $\P(\langle \pi, \pi' \rangle \geq A_n) = \Omega(1)$ if and only if
    \begin{align*}
      &(c_1 + c_2^{1/2}) ({c'_1} + {c'_2}^{1/2}) = O(n), ~\text{and}\\
      &c_2 + c'_2 = n - \Omega(n).
    \end{align*}
\end{enumerate}
In fact the hypothesis $c_1, c'_1 = o(n^{2/3})$ is not needed for the ``only if'' statements.
\end{theorem}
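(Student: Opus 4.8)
Conjugating $\pi'$ by a uniform $\sigma\in S_n$ produces a uniform element of $\calC'$, so it is enough to fix \emph{any} $\pi\in\calC$ and a representative $\pi'_0\in\calC'$ and analyse $G:=\langle\pi,\sigma\pi'_0\sigma^{-1}\rangle$ for uniform $\sigma$ (and then take an extra average over $\pi\in\calC$ for the ``only if'' parts). Write $P:=(c_1+c_2^{1/2})(c'_1+{c'_2}^{1/2})$; expanding, $P=o(n)$ (resp.\ $O(n)$) if and only if each of $c_1c'_1/n$, $c_1^2c'_2/n^2$, $c_2{c'_1}^2/n^2$, $c_2c'_2/n^2$ is $o(1)$ (resp.\ $O(1)$), and these four quantities are, up to constants, the expected numbers of the four ``small obstructions'' below.

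\textbf{The obstructions (``only if'').} A set of size $1$ or $2$ invariant under $G$ makes $G$ intransitive, hence $\not\geq A_n$, and such a set arises exactly as: (A) a common fixed point; (B) a common $2$-cycle; (C) a $2$-cycle of $\pi'$ whose support is pointwise fixed by $\pi$; or (C$'$) the same with $\pi,\pi'$ interchanged. Using that $\fix(\pi)$, $\fix(\pi')$ are uniformly random subsets and the $2$-cycles form uniformly random partial matchings, the expected numbers of these configurations are $\asymp c_1c'_1/n,\ c_2c'_2/n^2,\ c_1^2c'_2/n^2,\ c_2{c'_1}^2/n^2$, with second moments of Poisson type. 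Hence if $P=\omega(n)$ one of these means tends to infinity and the configuration occurs with probability $1-o(1)$, giving $\P(G\geq A_n)=o(1)$; if instead $P\geq\delta n$ only along a subsequence, a Paley--Zygmund bound still produces the configuration with probability $\Omega(1)$ there. This settles the ``only if'' of (1) and half of that of (2); note that none of this uses $c_1,c'_1=o(n^{2/3})$. For the other half: if $c_2+c'_2=n-o(n)$ then $\pi$ and $\pi'$ each differ from a fixed-point-free involution on only $o(n)$ points, and I would show that with probability $1-o(1)$ the group $G$ is ``essentially dihedral'' — generated by two near-involutions whose $2$-cycle graphs have $O(\log n)$ components — hence far too small to contain $A_n$. (When $\pi,\pi'$ are honest involutions $G$ is literally dihedral and $A_n\not\leq D_m$ for $n\geq5$; the delicate point is to control the $o(n)$ impurities.)

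\textbf{The ``if'' of (1).} Assume $c_1,c'_1=o(n^{2/3})$ and $P=o(n)$. I would follow the classical transitive $\to$ primitive $\to$ classification route. \emph{Transitivity:} for $\emptyset\neq S\subsetneq[n]$ with $\pi(S)=S$, one has $\P_\sigma(\sigma\pi'_0\sigma^{-1}(S)=S)=(\text{number of }|S|\text{-subsets invariant under }\pi'_0)/\binom{n}{|S|}$, and summing over such $S$ bounds $\P(G\text{ intransitive})$. The number of invariant sets of a permutation of given size is governed by its short-cycle structure, so small $S$ contribute terms controlled by $c_1,c_2$ (via $P=o(n)$), \emph{except} for one family — a $k$-cycle of one permutation whose support is $k$ fixed points of the other, with expected count $\asymp c'_k c_1^k/n^k\leq c_1^k/(kn^{k-1})$ — which is $o(1)$ for every $k\geq3$ precisely because $c_1,c'_1=o(n^{2/3})$ (the case $k=3$ is binding, and is where the exponent $2/3$ comes from); larger $S$ are handled since many invariant sets force many short cycles. \emph{Primitivity:} similarly bound $\P(G\text{ preserves a nontrivial block system})$ by summing over block systems, which reduces to excluding blocks of bounded size (controlled as above) and again uses randomness of $\sigma$. \emph{Classification:} a primitive $G\leq S_n$ with $A_n\not\leq G$ is either of order $\leq n^{O(\log n)}$ or of large-base/product-action type; in the former case $\P_\sigma(\sigma\pi'_0\sigma^{-1}\in G)\leq|G|\,|C_{S_n}(\pi'_0)|/n!$ is super-polynomially small (the bound $c'_1=o(n^{2/3})$ keeps the centralizer small), and summing over the few primitive overgroups of $\pi$ gives $o(1)$; the large-base/product-action cases are excluded by a cycle-type/counting argument, applied if necessary to a product such as $\pi\pi'$, whose cycle type is close to uniform for random $\sigma$. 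Since $[G:A_n\cap G]\leq2$ when $A_n\leq G$, we conclude $G\geq A_n$ with probability $1-o(1)$.

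\textbf{The ``if'' of (2), and the expected obstacles.} Now assume $c_1,c'_1=o(n^{2/3})$, $P=O(n)$ and $c_2+c'_2=n-\Omega(n)$. Condition on none of (A)--(C$'$) occurring: each corresponding mean is $O(1)$, so each avoidance event has probability bounded below by a positive constant (Poisson-type lower bound), and these avoidance events are positively correlated, so their intersection has probability $\Omega(1)$. On this event — with $c_1,c'_1=o(n^{2/3})$ killing the $k\geq3$ obstructions automatically and $c_2+c'_2=n-\Omega(n)$ keeping us out of the near-involution regime — the pair $(\pi,\pi')$ remains generic enough that the argument of the ``if'' of (1) applies (the conditioning only constrains bounded-size features), giving $G\geq A_n$ with conditional probability $1-o(1)$; multiplying yields $\P(G\geq A_n)=\Omega(1)$. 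I expect the genuine difficulties to be: the classification step (iii), carried out uniformly over the worst-case fixed $\pi$ (when $\pi$ has no exploitable long or prime-length cycle, e.g.\ a product of $3$-cycles or of two $(n/2)$-cycles, the exotic primitive overgroups must be excluded by hand, presumably via a product with $\pi'$); verifying that the ``if''-(1) argument is robust to the conditioning needed in (2); and the near-involution regime $c_2+c'_2=n-o(n)$ in the ``only if'' of (2), where the $o(n)$ impurities prevent a clean reduction to genuine involutions.
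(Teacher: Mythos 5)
Your skeleton (bounded-size obstructions for the ``only if'', a first-moment count over common invariant sets for transitivity, then imprimitive/primitive counting with CFSG input) matches the paper's, and your identification of the family ``$k$-cycle on $k$ fixed points'' with $k=3$ as the source of the exponent $2/3$ is exactly right. But the two places you flag as ``expected obstacles'' are precisely where the proposal has genuine gaps, and in both cases the fix is not a refinement of your plan but a different argument. For the ``if'' of (2): conditioning on the absence of the four size-$\leq 2$ obstructions (A)--(C$'$) and claiming the case-(1) argument then gives conditional probability $1-o(1)$ is false. Under $P=O(n)$ one can have $c_2,c'_2\asymp n$, and then for every fixed $k\geq 2$ the expected number of common invariant $2k$-sets that are unions of $k$ $2$-cycles of each permutation is of order $(4c_2c'_2/n^2)^k/k=\Theta(1)$ (two fixed-point-free involutions of $S_{2k}$ generate a transitive group with probability $\asymp k^{-1/2}$), so intransitivity caused by orbits of size $4,6,8,\dots$ persists with probability bounded away from zero even after your conditioning; moreover the asserted positive correlation of the avoidance events is unproven. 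What is actually needed is joint control of the total number $N$ of orbits of size at most $n/2$: the paper bounds $\E N=O(1)$ via the generating-function bound $f_k\leq\min_{x>0}x^{-k}(1+x)^{c_1}(1+x^2)^{c_2}\cdots$ for the number of invariant $k$-sets, and then proves a genuine Poisson-type approximation $\P(N=0)=e^{-\E N}+o(1)$ by the method of moments (mixed factorial moments of all the $N_k$ simultaneously, with errors controlled by $\min\{c_1+c^{1/2},c'_1+{c'}^{1/2}\}^{-2/3}$, which is where the $o(n^{2/3})$ hypothesis re-enters). This is the technical heart of the paper and has no counterpart in your sketch.

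For the ``only if'' of (2) in the regime $c_2+c'_2=n-o(n)$, the ``essentially dihedral, hence too small'' claim is not an argument and is not the statement one should aim for once the $o(n)$ impurities are present; the provable statement is that $\langle\pi,\pi'\rangle$ is almost surely \emph{intransitive}. The paper gets this by a second-moment argument applied to the number of orbits of size less than $n^{1/3}$ consisting exclusively of $2$-cycles of both permutations: the bijection between pairs of fixed-point-free involutions of $S_{2k}$ and permutations with only even cycles gives the transitivity probability $\asymp k^{-1/2}$, whence the expected number of such orbits is $\asymp\sum_k(4c_2c'_2/n^2)^k/k=\omega(1)$, and Chebyshev finishes. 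Two smaller inaccuracies in your primitive step: the quantity controlling $|C_{S_n}(\pi')|$ is the total number of cycles $c'$ (it is at most $n^{c'}$), not $c'_1$, and the hypothesis that actually makes the union bound work is $c+c'\leq(1-\delta)n$ (which does follow from the theorem's hypotheses); and the product-action groups cannot be dismissed by asserting that $\pi\,{\pi'}^\sigma$ has near-uniform cycle type -- the paper's remark exhibits pairs of involutions trapped in $S_{\sqrt n}\wr S_2$ with positive probability -- rather, all primitive maximal subgroups are handled uniformly by the bound $f_Mf'_M/|S_n{:}M|\leq|M|n^{c+c'}/n!$ together with Babai's estimate that the sum of $|M|$ over classes of primitive maximal subgroups is at most $n^{\sqrt n}$.
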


The point of the theorem is that the likelihood of $\langle \pi, {\pi'}^\sigma\rangle \geq A_n$ is more-or-less completely characterized by counting fixed points and $2$-cycles. The symmetric version of the theorem is illustrative:

\begin{corollary} \label{cor:symmetric-case}
Assume $c_1 = c'_1$ and $c_2 = c'_2$.
\begin{enumerate}
    \item $\P(\langle \pi, \pi' \rangle \geq A_n) = 1-o(1)$ if and only if $c_1 = o(n^{1/2})$ and $c_2 = o(n)$.
    \item $\P(\langle \pi, \pi' \rangle \geq A_n) = \Omega(1)$ if and only if $c_1 = O(n^{1/2})$ and $c_2 = n/2 - \Omega(n)$.
\end{enumerate}
\end{corollary}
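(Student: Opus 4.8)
The plan is to derive Corollary~\ref{cor:symmetric-case} directly from Theorem~\ref{main-thm} by specializing to the symmetric case $c_1 = c'_1 =: a$ and $c_2 = c'_2 =: b$. Under this assumption the product $(c_1 + c_2^{1/2})({c'_1} + {c'_2}^{1/2})$ collapses to the square $(a + b^{1/2})^2$, so the entire task reduces to unwinding a handful of elementary asymptotic equivalences and checking that the running hypothesis of the theorem is automatically met in the relevant regimes.

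For part~(1), I would first observe that $(a + b^{1/2})^2 = o(n)$ holds if and only if $a + b^{1/2} = o(n^{1/2})$, which, since $a, b^{1/2} \geq 0$, holds if and only if both $a = o(n^{1/2})$ and $b^{1/2} = o(n^{1/2})$, i.e.\ $a = o(n^{1/2})$ and $b = o(n)$. This is exactly the condition in Corollary~\ref{cor:symmetric-case}(1). To invoke the ``if'' direction of Theorem~\ref{main-thm}(1) one needs $c_1, c'_1 = o(n^{2/3})$; but $a = o(n^{1/2})$ forces $a = o(n^{2/3})$, so this hypothesis comes for free. The ``only if'' direction needs no such hypothesis, as the theorem records. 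Hence both implications of part~(1) follow.

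For part~(2), the same manipulation shows that $(a + b^{1/2})^2 = O(n)$ is equivalent to $a = O(n^{1/2})$ and $b = O(n)$, while $c_2 + c'_2 = n - \Omega(n)$ reads $2b = n - \Omega(n)$, i.e.\ $b = n/2 - \Omega(n)$. Since $b = n/2 - \Omega(n)$ already implies $b = O(n)$ (indeed $b \leq n/2$ always), the two conditions of Theorem~\ref{main-thm}(2) collapse to ``$a = O(n^{1/2})$ and $b = n/2 - \Omega(n)$'', which is precisely Corollary~\ref{cor:symmetric-case}(2). The ``if'' direction again requires $c_1, c'_1 = o(n^{2/3})$, which follows from $a = O(n^{1/2}) = o(n^{2/3})$, and the ``only if'' direction requires nothing further.

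There is essentially no obstacle here beyond bookkeeping: the only point that deserves a moment's care is the compatibility check just described---namely that in every regime in which we need to apply an ``if'' direction of Theorem~\ref{main-thm}, the number of fixed points is automatically $o(n^{2/3})$, so that the theorem applies verbatim.
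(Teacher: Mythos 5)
Your proposal is correct and matches the paper's intent exactly: the corollary is stated as an immediate specialization of Theorem~\ref{main-thm} (the paper gives no separate proof), and your bookkeeping --- including the observation that $c_1 = O(n^{1/2})$ automatically yields the $o(n^{2/3})$ hypothesis needed for the ``if'' directions, while the ``only if'' directions need no such hypothesis --- is precisely the verification required.
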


Our theorem refines some previous results about random generation in $S_n$. The most well-known and basic of these is Dixon's theorem~\cite{dixon}, confirming a conjecture of Neto, that two random elements of $S_n$ almost surely generate at least $A_n$. Confirming a conjecture of Robinson, Shalev~\cite{shalev} proved that this remains true if we require the two elements to be conjugate. One can view Theorem~\ref{main-thm} as a common generalization of all these two results, since a random permutation almost surely has very few fixed points and very few $2$-cycles.\footnote{To be precise, for any $\omega_n \to \infty$, the probability that $\pi$ has at most $\omega_n$ fixed points tends to $1$ as $n\to\infty$, and similarly for $2$-cycles.} In another direction, Babai and Hayes~\cite{babai_hayes} proved that if $\pi$ is fixed and $\pi'$ is uniformly random, then $\pi$ and $\pi'$ will almost surely generate at least $A_n$ if and only if $\pi$ has $o(n)$ fixed points. This does not follow from our theorem (because of the $c_1, c'_1 = o(n^{2/3})$ restriction), but it certainly has a similar flavour.

Our original motivation was the following question. Let $\ordn$ be the set of all $m$ which occur as the order of at least one $\pi \in S_n$. Fix $m \in \ord S_n$, and let $\pi$ and $\pi'$ be two random elements of order $m$. Should we expect $\pi$ and $\pi'$ to generate at least $A_n$? For the almost-sure version of the question, we give a positive answer for a broad class of $m$. Specifically, we prove the following theorem.

\begin{theorem} \label{thm:fixed-order-almost-sure}
Let $m \in \ord S_n$, and assume that either
\begin{enumerate}
    \item $m$ has a divisor $d$ in the range $3 \leq d \leq o(n^{1/2})$, or
    \item $m$ is even and there is at least one $\pi \in S_n$ with $o(n^{1/2})$ fixed points and $o(n)$ $2$-cycles.
\end{enumerate}
Then two random elements of $S_n$ of order $m$ almost surely generate at least $A_n$.
\end{theorem}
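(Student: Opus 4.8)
The plan is to derive Theorem~\ref{thm:fixed-order-almost-sure} from Theorem~\ref{main-thm} together with an anatomical fact about random permutations of prescribed order. Sample a uniformly random $\pi\in S_n$ of order $m$ in two stages: first its cycle type $\tau$, with probability proportional to the size of the corresponding conjugacy class (here $\tau$ ranges over partitions of $n$ all of whose parts divide $m$ and whose $\lcm$ equals $m$), then $\pi$ uniformly in that class. It then suffices to show that, under hypothesis (1) or (2), there are $f(n)=o(n^{1/2})$ and $g(n)=o(n)$ so that with probability $1-o(1)$ a uniformly random $\pi\in S_n$ of order $m$ has at most $f(n)$ fixed points and at most $g(n)$ $2$-cycles. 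Granting this, a random pair $(\pi,\pi')$ of order-$m$ elements has, with probability $1-o(1)$, both cycle types in the collection $\mathcal{G}_n$ of ``good'' types, and conditioning on two types $\tau,\tau'\in\mathcal{G}_n$ places us exactly in the setting of Theorem~\ref{main-thm} with $c_1,c_1'\leq f(n)=o(n^{2/3})$ and $(c_1+c_2^{1/2})(c_1'+c_2'^{1/2})\leq (f(n)+g(n)^{1/2})^2=o(n)$; part~(1) of that theorem gives $\P(\langle\pi,\pi'\rangle\geq A_n\mid\tau,\tau')=1-o(1)$, and the uniformity over $\tau,\tau'\in\mathcal{G}_n$ needed to conclude follows from the quantitative form of Theorem~\ref{main-thm} established in its proof (or, failing that, from a routine subsequence argument).

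For the anatomical statement under hypothesis~(1), fix a divisor $d\mid m$ with $3\leq d=o(n^{1/2})$ and switch on cycle types. If $\tau$ is an order-$m$ type with at least $d$ fixed points, let $\phi(\tau)$ be obtained by replacing $d$ fixed points with a single $d$-cycle; then $\phi$ is injective, preserves the order (all parts still divide $m$, and a $d$-cycle does not raise the $\lcm$ past $m$), and passing from the class of $\tau$ to the class of $\phi(\tau)$ multiplies the class size by $c_1(c_1-1)\cdots(c_1-d+1)/(d(c_d+1))$. Since there are at most $n/d$ $d$-cycles, this factor exceeds $2$ once $c_1\geq 2n^{1/d}+d$, so classes with many fixed points are dominated by their $\phi$-images, and a geometric-series estimate gives $\P(c_1\geq T)=o(1)$ for any $T$ growing slightly faster than $n^{1/d}+d$; as $d\geq 3$ and $d=o(n^{1/2})$, this is $o(n^{1/2})$. (The only arithmetic obstruction is that $c_1\equiv n$ modulo the gcd of the parts $>1$ dividing $m$, which is $1$ unless $m=p^a$ is a prime power, when it equals $p\leq d=o(n^{1/2})$; either way $c_1$ is forced to be at most $o(n^{1/2})$.) If $m$ is odd there are no $2$-cycles; if $m$ is even, the same switch, now folding a batch of $2$-cycles into one $d$-cycle (or $2d$-cycle, using $2d\mid m$ when $d$ is odd), bounds $c_2$ by $o(n)$ with probability $1-o(1)$.

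For hypothesis~(2): here $4\nmid m$ and every divisor $d\geq 3$ of $m$ is not $o(n^{1/2})$ (otherwise we would be in case~(1)), so $m=2m_0$ with $m_0>1$ odd and all prime factors of $m_0$ exceeding $\epsilon n^{1/2}$ for some fixed $\epsilon>0$; consequently the divisors of $m$ of ``small'' type are only $1$ and $2$. Hence an order-$m$ permutation consists of at most $O(n^{1/2})$ ``long'' cycles — each of length divisible by some prime $>\epsilon n^{1/2}$ — realizing the prime-power and even-length divisibilities dictated by $m$, together with an arbitrary involution on the remaining points. Since an involution on $s$ points has $\asymp s^{1/2}$ fixed points and at most $s/2$ $2$-cycles, the hypothesis that \emph{some} order-$m$ permutation has $o(n^{1/2})$ fixed points and $o(n)$ $2$-cycles is equivalent to saying the long cycles can be chosen to cover all but $o(n)$ of the $n$ points. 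A weight comparison — a cycle of length $\ell$ on a given $\ell$-set accounts for $(\ell-1)!$ elements, vastly more than the $\asymp\ell^{\ell/2}e^{-\ell/2}$ involutions on that set — then shows that in a uniformly random order-$m$ permutation the long cycles cover all but $o(n)$ points with probability $1-o(1)$, so the residual involution lives on $o(n)$ points and contributes $o(n^{1/2})$ fixed points and $o(n)$ $2$-cycles. Evenness of $m$ is used exactly here: the $2$-cycles of the residual involution absorb the $o(n)$ of slack, so coverage $n-o(n)$ by the long cycles suffices rather than the much stronger $n-o(n^{1/2})$.

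I expect the genuine difficulty to lie in case~(2) — precisely in controlling the conditioning on ``order exactly $m$'' as opposed to ``order dividing $m$'', since in this regime the former cuts down to an extremely thin family of cycle types and one must verify that this restriction does not covertly force many fixed points or $2$-cycles. By comparison, the switching argument of case~(1) and the reduction of the theorem to the anatomical statement are routine.
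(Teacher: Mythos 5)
Your reduction to Theorem~\ref{main-thm} via two-stage sampling, and your case~(1), are essentially the paper's argument: Lemma~\ref{easy_computation} there performs your switch in a single injective step (replace $kd$ fixed points by $k$ $d$-cycles, resp.\ $kd$ $2$-cycles by $2k$ $d$-cycles), which sidesteps the slight awkwardness in your ``geometric-series'' over repeated single-cycle switches (iterates are not jointly injective unless you fix the number of iterations; easily repaired), and your parenthetical about $c_1$ modulo a gcd is unnecessary since you only ever reduce $c_1$ down to a threshold that is at least $d$. The genuine gap is in case~(2), at exactly the step you flag: the claim that in a uniformly random order-$m$ permutation the long cycles cover all but $o(n)$ points whp. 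Your justification --- on a given $\ell$-set there are $(\ell-1)!$ $\ell$-cycles versus only about $\ell^{\ell/2}e^{-\ell/2}$ involutions --- implicitly compares a bad permutation with one obtained by re-packing part of its residual involution into long cycles on the same points. But every cycle length must divide $m$, and in this regime every divisor $d\geq 3$ of $m$ is $\gg n^{1/2}$ and may exceed, or fail to tile, the residual set: for instance if the smallest divisor $\geq 3$ of $m$ has order $n^{0.8}$ and the residual consists of $n^{0.6}$ fixed points, no local re-packing exists, yet such a permutation is bad for your purposes. So the local weight comparison proves nothing as stated; the hypothesis (existence of one good $\pi$) must enter quantitatively, whereas in your write-up it is used only to say that good coverage is \emph{achievable}, not that it is \emph{typical}.

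The claim you need is true, and the repair is the paper's device in Lemma~\ref{good_m_calculation_2}: bound globally the density in $S_n$ of permutations with at least $\delta n/4$ $2$-cycles (at most $1/(2^{r}r!)$ with $r=\delta n/4$), or at least $\delta n/2$ fixed points (at most $1/(\delta n/2)!$), by $n^{-\Omega(n)}$, and compare against the single witness class $\calC_0$: its elements have $O(n^{1/2})$ long cycles, $o(n^{1/2})$ fixed points and $o(n)$ $2$-cycles, hence $o(n)$ cycles in total, so $|\calC_0|/n!\geq n^{-o(n)}$, which dwarfs the bad mass inside the set of order-$m$ elements. This yields your coverage statement, after which your conditional step (given the long part, the residual is a uniform involution, possibly conditioned to contain a $2$-cycle, hence has $O(s^{1/2})=o(n^{1/2})$ fixed points) is a legitimate alternative to the paper's treatment of fixed points, which instead switches $2k$ fixed points into $k$ $2$-cycles --- the place where evenness of $m$ enters in the paper --- and then invokes the same global count. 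Without some such comparison against $\calC_0$, your case~(2) is not a proof.
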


For the positive-probability version of the question, we give a complete characterization.

\begin{theorem} \label{thm:fixed-order-positive-prob}
Let $m \in \ord S_n$. Then two random elements of order $m$ generate at least $A_n$ with probability bounded away from zero if and only if
\begin{enumerate}
    \item $m$ is odd and there is at least one $\pi \in S_n$ of order $m$ with $O(n^{1/2})$ fixed points, or
    \item $m$ is even and not $2$.
\end{enumerate}
\end{theorem}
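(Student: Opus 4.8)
The plan is to derive everything from Theorem~\ref{main-thm}(2), applied to the (random) conjugacy classes of two independent uniformly random elements $\pi,\pi'$ of order $m$; recall that $\pi$ then has cycle type a random partition of $n$ whose parts divide $m$ and whose least common multiple is exactly $m$, weighted by its class size. Writing $c_j = c_j(\pi)$ for the number of $j$-cycles of $\pi$ and using the classical criterion $m\in\ord S_n\iff\sum_{p^e\|m}p^e\leq n$, the two facts about the cycle type of $\pi$ that are really needed are: \emph{(A)} if $r := \min\{c_1(\sigma):\sigma\in S_n,\ \ord\sigma = m\} = O(n^{1/2})$, then $\P(c_1(\pi) = O(n^{1/2})) = \Omega(1)$; and \emph{(B)} if $m$ is even with $m\ne2$, then $\P(c_2(\pi)\leq(\tfrac12-\delta)n) = \Omega(1)$ for some constant $\delta>0$. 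I would also record the elementary bound $r\leq1$ when $m$ is even: realise $m$ by disjoint cycles whose lengths are the prime powers exactly dividing $m$, then pad the remaining points with $2$-cycles, leaving at most one fixed point.

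For the ``only if'' direction, suppose $\P(\langle\pi,\pi'\rangle\geq A_n) = \Omega(1)$; then $m\ne1$, as $\langle\pi,\pi'\rangle$ must be transitive. Averaging over the choice of the classes $\calC\ni\pi$ and $\calC'\ni\pi'$, and using that the conditional probability is at most $1$, there is with probability $\Omega(1)$ a class pair $(\calC,\calC')$ for which $\P(\langle\pi,\pi'\rangle\geq A_n\mid\calC,\calC') = \Omega(1)$; for such a pair the ``only if'' half of Theorem~\ref{main-thm}(2) (which imposes no condition on fixed points) gives $(c_1+c_2^{1/2})(c_1'+{c_2'}^{1/2}) = O(n)$ and $c_2+c_2' = n - \Omega(n)$. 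If $m$ is odd then $c_2 = c_2' = 0$, so $c_1 c_1' = O(n)$ and $\min(c_1,c_1') = O(n^{1/2})$, i.e.\ one of $\calC,\calC'$ represents an order-$m$ element with $O(n^{1/2})$ fixed points — this is~(1). If $m$ is even we must rule out $m=2$: for $m=2$ every class is an involution class, and a uniformly random involution has $c_2 = n/2 - O(n^{1/2})$ outside an event of probability $e^{-\Omega(n)}$ (a standard large-deviation estimate for its number of fixed points), so with probability $1 - e^{-\Omega(n)}$ the sampled class pair has $c_2+c_2'\geq n - O(n^{1/2})$, contradicting $c_2+c_2' = n - \Omega(n)$; hence $m$ even forces $m\ne2$, which is~(2).

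For the ``if'' direction, assume (1) or (2). In case~(1), $m$ is odd and $r = O(n^{1/2})$; by~(A), with probability $\Omega(1)$ both $\pi$ and $\pi'$ have $c_1 = O(n^{1/2})$, and $c_2 = c_2' = 0$, so $(c_1+c_2^{1/2})(c_1'+{c_2'}^{1/2}) = O(n)$, $c_2+c_2' = 0 = n - \Omega(n)$, and $c_1,c_1' = O(n^{1/2}) = o(n^{2/3})$; the ``if'' half of Theorem~\ref{main-thm}(2) (applied uniformly over class pairs meeting its hypotheses) then gives $\P(\langle\pi,\pi'\rangle\geq A_n\mid\calC,\calC') = \Omega(1)$, hence $\P(\langle\pi,\pi'\rangle\geq A_n) = \Omega(1)$. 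In case~(2), $m$ is even and $\ne2$, so $r\leq1$ and~(A) applies, while~(B) applies as well; intersecting the two $\Omega(1)$-events we obtain, with probability $\Omega(1)$, that $\pi$ and $\pi'$ both satisfy $c_1 = O(n^{1/2})$ and $c_2\leq(\tfrac12-\delta)n$, whence $(c_1+c_2^{1/2})(c_1'+{c_2'}^{1/2}) = O(n)$, $c_2+c_2'\leq(1-2\delta)n = n - \Omega(n)$, and $c_1,c_1' = o(n^{2/3})$, and Theorem~\ref{main-thm}(2) concludes as before.

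The substance, and the main obstacle, lies in (A) and (B). The natural approach is a generating-function/saddle-point analysis of the cycle type of a random order-$m$ element: dropping the ``$\lcm$ exactly $m$'' constraint, the relevant exponential generating function is $\exp\bigl(\sum_{d\mid m,\ d\leq n}x^d/d\bigr)$, and its $x^n$-coefficient is controlled by a saddle $x = \rho$ with $\sum_{d\mid m,\ d\leq n}\rho^d = n$, giving $\E c_1\approx\rho$ and $\E c_2\approx\rho^2/2$ with $\rho = O(n^{1/D})$, where $D$ is the largest divisor of $m$ not exceeding $n$. For $m$ even and $\ne2$ one has $D\geq3$, and a short case split according to whether $D = o(n)$ or $D = \Omega(n)$ (in the latter case only boundedly many $D$-cycles fit, but each covers $\Omega(n)$ points, leaving few for $2$-cycles) yields~(B). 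Part~(A) is the delicate one: one cannot hope for $c_1 = O(r+1)$ — for instance when $m = 3$ and $3\mid n$ one has $r = 0$ but $c_1$ concentrates near $n^{1/3}$ — so the target is $c_1 = O(\max(r,n^{1/3}))$ with probability $\Omega(1)$, which suffices since $D\geq3$ and $r = O(n^{1/2})$ keep both $r$ and $n^{1/3}$ at $O(n^{1/2})$. The real difficulty is that the saddle-point heuristic must be validated precisely where it is weakest — when the divisors of $m$ are sparse and large, the extreme case being $m$ a prime close to $n$, where a single long cycle fits and $c_1$ is essentially forced to equal $n\bmod m$ — and there the distribution of $c_1$ has to be estimated by hand, by showing that $\P(c_1 = k)$ decays fast enough once $k$ passes $\max(r,n^{1/3})$ that an $\Omega(1)$ fraction of the mass remains at $O(n^{1/2})$.
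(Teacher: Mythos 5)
Your outer reduction is exactly the paper's: both directions are funneled through Theorem~\ref{main-thm}(2) by viewing a uniform order-$m$ element as a class-size-weighted random conjugacy class, the ``only if'' half coming from the unconditional converses (plus a separate disposal of $m=2$), and the ``if'' half from verifying that the sampled classes satisfy $c_1=O(n^{1/2})$, $c_2\leq(\tfrac12-\delta)n$ with probability bounded below. The problem is that your claims (A) and (B), which you yourself identify as ``the substance'', are not proved: you offer a saddle-point heuristic for the exponential generating function $\exp(\sum_{d\mid m,\,d\leq n}x^d/d)$, but this analyzes the wrong measure (order dividing $m$, no weighting subtleties from the $\lcm$-exactly-$m$ constraint), and you concede that it must be ``validated precisely where it is weakest'' -- sparse, large divisors of $m$ -- and that there the distribution of $c_1$ ``has to be estimated by hand''. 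That hand estimate is the actual content of the theorem, and it is missing. The paper's own examples (e.g.\ $m=pq$, $n=pq+p-1$, where a single class of $pq$-cycles with $\sim n^{1/2}$ fixed points dominates everything else) show that the weighted class distribution is decided by delicate comparisons of individual class sizes, not by a smooth saddle; so as written the proposal proves the reduction but not the theorem.

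For comparison, the paper establishes (stronger, almost-sure versions of) (A) and (B) by entirely elementary class-size comparisons: Lemma~\ref{easy_computation} bounds $|\calC|/|\calC'|$ when $kd$ fixed points (or 2-cycles) are traded for $k$ $d$-cycles, and this is combined with a case split on the smallest relevant divisor $d$ of $m$ -- if $d=o(n^{1/2})$ one injects classes with many fixed points or 2-cycles into much larger classes (Lemmas~\ref{good_m_calculation}, \ref{good_m_calculation_2}); if $d=\Omega(n^{1/2})$ one plays the crude bound $1/t!$ for the density of permutations with $\geq t$ fixed points against the lower bound $|\calC_0|/n!\geq n^{-O(n^{1/2})}$ for a witness class $\calC_0$ with few fixed points (which exists by hypothesis in the odd case, and by padding with 2-cycles in the even case), and for 2-cycles one also uses that any order-$m$ element with $m>2$ even contains a cycle of length $\geq d$, forcing $c_2\leq\min\{2d,(n-d)/2\}\leq 2n/5$. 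If you want to complete your write-up, replacing the saddle-point sketch by arguments of this type (or carrying out the by-hand estimates you defer) is what is needed; everything upstream of (A) and (B) in your proposal is fine and matches the paper. (Your large-deviation claim for involutions in the $m=2$ case is also unproved, though standard; the paper instead just notes that two involutions generate a dihedral group, so never $A_n$.)
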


The novelty of these results is the wide range we allow for $m$. The case of bounded $m$ is included in work of Liebeck and Shalev: see~\cite{liebeck-shalev-23generation}, and particularly \cite{liebeck-shalev-fuchsian-groups}. In the latter paper the authors consider more generally random homomorphisms $\Gamma \to S_n$ for any Fuchsian group $\Gamma$: thus for example they prove that $A_n$ is generated by random elements satisfying $x^2 = y^3 = (xy)^7 = 1$. It is clear that our rather elementary approach, relying essentially on the independence of the generators, does not apply to such a problem. On the other hand we are not aware of any previous results for unbounded $m$.

\subsection{Notation and idiosyncrasies}

Much of our notation has already appeared. We consistently write $\pi$ and $\pi'$ for our two permutations, which we assume are random subject to having $c_j$ and $c'_j$ $j$-cycles, respectively, for each $j$. We will write $c$ and $c'$ for the total number of cycles:
\begin{align*}
  c&= c_1 + \cdots + c_n \\
  c'&= c'_1 + \cdots + c'_n.
\end{align*}
Occasionally, we will denote cycle types using exponent notation: thus for example the conjugacy class $\calC$ consists of all permutations with cycle type $(1^{c_1}, 2^{c_2}, \dots, n^{c_n})$.

We write $\Omega$ for our ground set of size $n$, and we write $S_n$ and $A_n$ for the symmetric and alternating groups on $\Omega$. If $G$ acts on a set $X$, $g\in G$, $x \in X$, we write $x^g$ for the image of $x$ under $g$. In particular, if $g, h \in G$ we write $h^g$ for $g^{-1}hg$. Thus if $\fix_X(g)$ is the fixed point set of $g$ then we have
\[
  \fix_X(g^h) = \fix_X(g)^h.
\]

We use standard big-O and little-o without reservation, all understood with respect to the limit $n \to\infty$. Additionally we use the Vinogradov notation $x \ll y$ to mean $x = O(y)$, and $x \asymp y$ to mean $x \ll y$ and $x \gg y$ (i.e., $x/y$ is bounded above and below by positive constants).

As will be clear already and annoying to some readers, we are mainly interested in asymptotic properties of $S_n$ as $n\to\infty$, so almost all of our results are expressed with inexplicit error terms like $o(1)$ or $1 - o(1)$. None of our methods are especially advanced, so it is certainly possible to prove everything with explicit error terms, but doing so would be notationally cumbersome and, we feel, distracting. Instead, we have prioritized statements which are as general as possible with respect to the conjugacy classes $\calC$ and $\calC'$. If ever our theorem is needed with an explicit error term for specific conjugacy classes, it will be easy enough to rehash the essential arguments.

In line with this philosophy, we do not hesitate to say ``almost surely'', when really we mean ``with probability tending to $1$ as $n\to\infty$'', and ``with positive probability'' instead of ``with probability bounded away from $0$'', etc. This technically imprecise language could actually be made precise by working with an ultraproduct of $(S_n)_{n=1}^\infty$ instead of the finite groups $S_n$, but there is no reader whom this would help.

\subsection{Organization of the paper}

We now briefly sketch the proof of Theorem~\ref{main-thm}, and explain the organization of the paper.

First, we prove the ``only if'' statements in Theorem~\ref{main-thm} in Section~\ref{sec:converses}. We also give a few examples justifying the $c_1, c'_1 = o(n^{2/3})$ hypothesis. We hope that starting with a wealth of examples will orient the reader, and motivate the rest of the paper.

In particular, it will be clear from these examples that the main obstruction in Theorem~\ref{main-thm} is transitivity. Therefore in Section~\ref{sec:intransitive} we estimate the probability that $G = \langle \pi, \pi'\rangle$ is transitive by studying the distribution of the number $N$ of orbits of $G$ of size less than $n/2$. This is the most technical part of the paper. We will prove that if
\[
  (c_1 + c_2^{1/2}) (c'_1 + {c'_2}^{1/2}) = o(n)
\]
then $\E N = o(1)$, and hence $G$ is almost surely transitive. Otherwise, if
\[
  (c_1 + c_2^{1/2}) (c'_1 + {c'_2}^{1/2}) = O(n)
\]
and
\[
  c_2 + c'_2 = n - \Omega(n)
\]
then we will prove $\E N = O(1)$. We will then use the method of moments to prove the Poisson-type approximation
\[
  \P(N = 0) = e^{-\E N} + o(1).
\]
Hence $G$ is transitive with probability bounded away from zero.

If $G$ is transitive, then very likely $G \geq A_n$. We prove this in Section~\ref{sec:transitive-subgroups}. We treat the imprimitive and primitive cases separately. Neither argument is very difficult, and the bounds are much stronger than the bounds we give for the intransitive case. For the primitive case we use results depending the classification of finite simple groups. This will complete the proof of Theorem~\ref{main-thm}.

Finally, in Section~\ref{sec:fixed-order} we study fixed-order generation. Again we give a wealth of examples, and we prove Theorems~\ref{thm:fixed-order-almost-sure} and \ref{thm:fixed-order-positive-prob} using Theorem~\ref{main-thm}.

\section{Necessity of the hypotheses} \label{sec:converses}

Our first order of business is to discuss the necessity of the various hypotheses in Theorem~\ref{main-thm}. We prove the stated ``only if'' conditions, and additionally we discuss some examples where $c_1 + c'_1$ exceeds $n^{2/3}$.

\subsection{The case \texorpdfstring{$c_1 c'_1/n = \Omega(1)$}{c1 c1' / n = Omega(1)} or \texorpdfstring{$c_1 c'_1/n = \omega(1)$}{c1 c1' = omega(1)}} \label{subsec:fixed-points}

If $\pi$ and $\pi'$ each have many fixed points then it is likely that they have a common fixed point. The expected number of common fixed points\ of $\pi$ and $\pi'$ is exactly $c_1 c'_1 / n$. We claim that if $c_1 c'_1 / n = \Omega(1)$ then there is a nonvanishing probability that $\pi$ and $\pi'$ have a common fixed point, and if $c_1 c'_1 / n = \omega(1)$ then almost surely $\pi$ and $\pi'$ have a common fixed point. To see this, note that $\fix(\pi)$ is just a random set of size $c_1$ and $\fix(\pi')$ is just a random set of size $c'_1$. The probability that two random sets of size $c_1$ and $c'_1$ are disjoint is
\begin{align*}
  \frac{n-c_1}{n} \frac{n-c_1-1}{n-1} \cdots \frac{n-c_1-c'_1+1}{n-c'_1+1}
  &\leq \exp\left(
    - \frac{c_1}{n} - \frac{c_1}{n-1} - \cdots - \frac{c_1}{n-c'_1+1}
  \right) \\
  &\leq \exp\left(
    - \frac{c_1 c'_1}{n}
  \right).
\end{align*}
This proves both our claims.

\subsection{The case \texorpdfstring{$c_2 c'_2 \gg n^2$}{c2 c2' >> n\textasciicircum2}}

Suppose that $c_2 c'_2 \gg n^2$. We claim that there is a nonvanishing probability that $\pi$ and $\pi'$ have a common $2$-cycle. To see this, fix the $c_2$ $2$-cycles of $\pi$, and consider picking the $c'_2$ $2$-cycles of $\pi'$ in order. The $k$-th $2$-cycle is chosen uniformly from a pool of $\binom{n-2k+2}{2}$ possibilities, of which at least $c_2-2k+2$ are also $2$-cycles of $\pi$. Thus the probability that each of these fails to be a common $2$-cycle is bounded by
\[
\prod_{k \leq c'_2, c_2/2} \left( 1- \frac{c_2-2k+2}{\binom{n-2k+2}{2}} \right) \leq \exp \left(-\Omega \left(\frac{c_2 c'_2}{n^2}\right)\right).
\]
If $c_2 c'_2 \gg n^2$ then this is bounded away from 1.

\subsection{The case \texorpdfstring{$c_1 {c'_2}^{1/2}/n = \Omega(1)$}{c1 c2'\textasciicircum{1/2} / n = Omega(1)} or \texorpdfstring{$c_1 {c'_2}^{1/2}/n = \omega(1)$}{c1 c2'\textasciicircum{1/2} / n = omega(1)}, or vice versa} \label{subsec:c1c2}

Morally, a similar argument applies if $c_1^2 c'_2 \gg n^2$, but the situation is complicated by the need to track two ``state variables'': the number of remaining $2$-cycles to place, and the number of exhausted fixed points. Instead we use moment methods.

Assume $c_1^2 c'_2 \gg n^2$, and let $N$ be the number of $2$-cycles $(xy)$ of $\pi'$ such that $x,y$ are both fixed points of $\pi$. Clearly,
\[
\E N = \frac{c_1(c_1-1)c'_2}{n(n-1)}=\frac {c_1^2 c'_2}{n^2}(1+O(1/c_1)).
\]
Counting ordered pairs of such $2$-cycles, we have
\begin{align*}
  \E(N(N-1))
  &= \frac{c_1(c_1-1)(c_1-2)(c_1-3)c'_2(c'_2-1)}{n(n-1)(n-2)(n-3)} \\
  &= \pfrac{c_1^2 c'_2}{n^2}^2 (1 + O(1/c_1 + 1/c'_2)).
\end{align*}
Hence
\[
  \Var N = \E N + \E(N(N-1)) - (\E N)^2 \leq \E N + O(1/c_1 + 1/c'_2) (\E N)^2,
\]
so
\[
  \P(N=0) \leq \frac{\Var N}{(\E N)^2} = \frac1{\E N} + O(1/c_1 + 1/c'_2).
\]
Note now that if $c_1^2 c'_2 \gg n^2$ then $c_1 \gg n^{1/2}$. The case in which $c_1 \asymp n$ and $c'_2 \asymp 1$ can be handled separately, so assume also $c'_2 = \omega(1)$. Then we have
\begin{align*}
  \P(N=0) &\leq \frac1{\E N} + o(1),~\text{where}\\
  \E N &\sim \frac{c_1^2 c'_2}{n^2}.
\end{align*}
Thus if $c_1^2 c'_2 / n^2 = \omega(1)$ then $\P(N=0) = o(1)$.

The case in which $c_1^2 c'_2 / n^2 \asymp 1$ is a little more delicate, but can be handled with the method of moments. By a similar argument we have
\[
  \E (N (N-1) \cdots (N-k+1)) = \pfrac{c_1^2 c'_2}{n^2}^k (1 + O_k(1/c_1 + 1/c'_2)).
\]
Moreover,
\[
  \E (N(N-1) \cdots (N-k+1)) \leq \pfrac{c_1^2 c'_2}{n^2}^k.
\]
Thus from Bonferroni's inequalities we have
\begin{align*}
  \P(N = 0)
  &= \sum_{k=0}^{K-1} (-1)^k \E \binom{N}{k} + O\left( \E \binom{N}{K} \right) \\
  &= \sum_{k=0}^{K-1} \frac{(-1)^k}{k!} \pfrac{c_1^2 c'_2}{n^2}^k (1 + O_k(1/c_1 + 1/c'_2)) + O\left( \frac1{K!} \pfrac{c_1^2 c'_2}{n^2}^K \right) \\
  &= e^{-c_1^2 c'_2 / n^2} + O_K \left( 1/c_1 + 1/c'_2 \right) + O \left(\frac1{K!} \pfrac{c_1^2 c'_2}{n^2}^K \right).
\end{align*}
Assuming $c_1^2 c'_2 / n^2 \asymp 1$, we can choose $K=O_\eps(1)$ so that the second error term is smaller than $\eps$, and then as before the first error term is $o_\eps(1)$. Hence
\[
  \P(N=0) = e^{-c_1^2 c'_2 / n^2} + o(1).
\]
In particular, $\P(N=0)$ is bounded away from $1$.

We will use a similar argument later, in Subsection~\ref{subsec:possion}, to prove the positive direction of Theorem~\ref{main-thm}(2).

\subsection{The case \texorpdfstring{$c_2, c'_2 = n/2 - o(n)$}{c2, c2' = n/2 - o(n)}}

Now suppose that $c_2, c'_2 = n/2 - o(n)$. We have already shown that the probability that $\pi$ and $\pi'$ have a common $2$-cycle is bounded away from zero. We claim further that $\langle \pi, \pi'\rangle$ is almost surely intransitive. We do not claim however that $\pi$ and $\pi'$ almost surely have a common $2$-cycle -- instead we use a more involved argument.

Let $N$ be the number of sets $X \subset \Omega$ with the following properties:
\begin{enumerate}
    \item $0 < |X| < n^{1/3}$,
    \item $X$ is preserved by both $\pi$ and $\pi'$,
    \item $\pi|_X$ and $\pi'|_X$ both consist only of $2$-cycles, and
    \item $\langle \pi, \pi'\rangle |_X$ is transitive.
\end{enumerate}
In other words, $N$ is the number of orbits of $\langle \pi, \pi'\rangle$, of size less than $n^{1/3}$, consisting exclusively of $2$-cycles. We claim that $N$ is almost surely positive, so in particular $\langle \pi, \pi'\rangle$ is almost surely not transitive.

To prove this we use the second moment method. Let $N_k$ be the number of such sets of size $2k$. Then
\[
  \E N_k = \binom{n}{2k}^{-1} \binom{c_2}{k} \binom{c'_2}{k} p(k),
\]
where $p(k)$ is the probability that two elements of $S_{2k}$, each of cycle type $(2^{k/2})$, generate a transitive subgroup. Using $k = o(n^{1/2})$ and Stirling's approximation,
\[
  \E N_k
  \sim \pfrac{c_2 c'_2}{n^2}^k \frac{(2k)!}{k!^2} p(k)
  \asymp \pfrac{4 c_2 c'_2}{n^2}^k \frac{p(k)}{k^{1/2}}.
\]
Next we need to estimate $p(k)$.

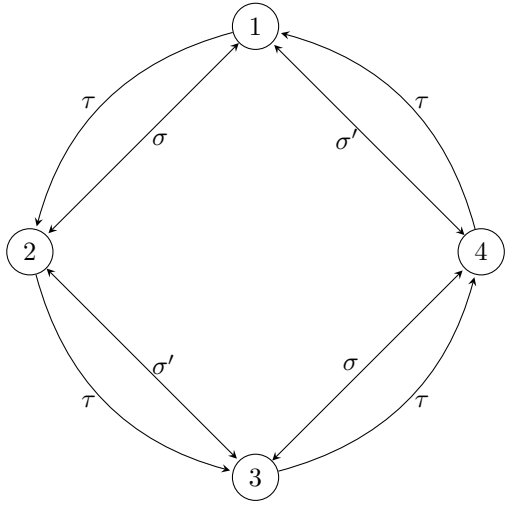
\begin{figure}
    \centering
    \begin{tikzpicture}[>=stealth,shorten >=1pt,auto]

\def \n {4}
\def \radius {3cm}

\foreach \x in {1,...,\n}{
    \node[draw, circle] (\x) at (\x*360/\n : \radius) {$\x$};
}

\path[->]
    (1) edge [bend right] node[left] {$\tau$} (2)
    (2) edge [bend right] node[left] {$\tau$} (3)
    (3) edge [bend right] node[right] {$\tau$} (4)
    (4) edge [bend right] node[right] {$\tau$} (1);

\path[<->]
    (1) edge node [right] {$\sigma$} (2)
    (2) edge node [right] {$\sigma'$} (3)
    (3) edge node [left] {$\sigma$} (4)
    (4) edge node [left] {$\sigma'$} (1);
\end{tikzpicture}
    \caption{Bijection between pairs of permutations $\sigma, \sigma'$ of cycle type $(2^k)$ and permutations $\tau$ having only even-length cycles}
    \label{fig:bijection}
\end{figure}

\begin{lemma} \label{lem:p(k)}
$ p(k) \asymp k^{-1/2}. $
\end{lemma}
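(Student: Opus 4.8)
\emph{Proof proposal.} The plan is to compute $p(k)$ exactly via a combinatorial bijection and then apply Stirling's formula.

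First I would reformulate transitivity graph-theoretically. A permutation of $2k$ points of cycle type $(2^k)$ is a fixed-point-free involution, i.e.\ a perfect matching. Given two such matchings $\sigma,\sigma'$, form the multigraph $\Gamma$ on the ground set of size $2k$ whose edge set consists of the $k$ transpositions of $\sigma$ together with the $k$ transpositions of $\sigma'$ (a transposition common to $\sigma$ and $\sigma'$ contributing a double edge). Every vertex lies in exactly one $\sigma$-edge and one $\sigma'$-edge, so $\Gamma$ is $2$-regular and its edges alternate $\sigma$/$\sigma'$ around each component; hence each component is a cycle of even length (a double edge being a cycle of length $2$). The orbits of $\langle\sigma,\sigma'\rangle$ are precisely the components of $\Gamma$, so $\langle\sigma,\sigma'\rangle$ is transitive if and only if $\Gamma$ is a single cycle of length $2k$.

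Next I would set up the bijection indicated in Figure~\ref{fig:bijection}. Given $\tau\in S_{2k}$ all of whose cycles have even length, write each cycle starting from its least element, $(a_1\,a_2\,\cdots\,a_{2\ell})$, and let $\sigma$ collect the transpositions $(a_1 a_2),(a_3 a_4),\dots,(a_{2\ell-1}a_{2\ell})$ and $\sigma'$ collect $(a_2 a_3),(a_4 a_5),\dots,(a_{2\ell}a_1)$, over all cycles of $\tau$. This produces a pair of fixed-point-free involutions, and conversely $\tau$ is recovered from $(\sigma,\sigma')$ by orienting each component of $\Gamma$ outward from its least vertex along a $\sigma$-edge first. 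One checks these two maps are mutually inverse (the case $\sigma=\sigma'$, and double edges, being handled correctly), so this is a bijection between $\{\tau\in S_{2k}:\text{all cycles of }\tau\text{ even}\}$ and pairs $(\sigma,\sigma')$ of fixed-point-free involutions of $S_{2k}$, under which $\langle\sigma,\sigma'\rangle$ is transitive exactly when $\tau$ is a single $2k$-cycle. Therefore, since there are $(2k-1)!$ such $2k$-cycles and $\bigl((2k-1)!!\bigr)^2=\bigl((2k)!/(2^k k!)\bigr)^2$ pairs of fixed-point-free involutions,
\[
  p(k)=\frac{(2k-1)!}{\bigl((2k)!/(2^k k!)\bigr)^2}=\frac{(2k-1)!\,4^k (k!)^2}{\bigl((2k)!\bigr)^2}=\frac{4^k}{2k}\binom{2k}{k}^{-1}.
\]
(Equivalently, the denominator is $(2k)![x^{2k}](1-x^2)^{-1/2}=(2k)!\binom{2k}{k}4^{-k}$ via the exponential generating function for permutations with all cycles of even length.) Finally, Stirling's approximation $\binom{2k}{k}\sim 4^k/\sqrt{\pi k}$ gives $p(k)\sim\frac{\sqrt\pi}{2}\,k^{-1/2}$, so in particular $p(k)\asymp k^{-1/2}$.

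I expect the only genuinely delicate point to be verifying the bijection — in particular that the inverse map (orienting each component of $\Gamma$ from its least vertex) is well defined and returns the original $\tau$, and that it behaves correctly on double edges and when $\sigma=\sigma'$. Everything after the exact formula is routine manipulation with Stirling's formula. (If one preferred to bypass the bijection, $p(k)$ could instead be estimated directly by conditioning on the component of $\Gamma$ containing a fixed point and setting up a recursion, but the exact evaluation above is cleaner and suffices.)
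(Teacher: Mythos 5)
Your proof is correct and follows essentially the same route as the paper: the same bijection between pairs of fixed-point-free involutions and permutations $\tau$ with only even cycles (built by starting each cycle/component at its least element and alternating $\sigma$- and $\sigma'$-edges), the same exact count $p(k)=\frac{4^k k!^2}{2k\,(2k)!}$, and the same Stirling asymptotic $p(k)\sim\frac{\sqrt\pi}{2}k^{-1/2}$. The matching/multigraph reformulation is just a cosmetic repackaging of the paper's argument.
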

\begin{proof}
There is a bijection between ordered pairs of permutations $\sigma, \sigma' \in S_{2k}$ of cycle type $(2^k)$ and permutations $\tau \in S_{2k}$ having only even-length cycles, defined as follows (see Figure~\ref{fig:bijection}). Given $\sigma, \sigma'$, define $\tau$ by repeating the following process: find the smallest (in some arbitrary but fixed order) element $x$ at which we have not yet defined $\tau$, and define
\[
  x
  \xrightarrow\tau x^\sigma
  \xrightarrow\tau x^{\sigma \sigma'}
  \xrightarrow\tau x^{\sigma \sigma' \sigma}
  \xrightarrow\tau \cdots
  \xrightarrow\tau x.
\]
It is easy to see that the cycle must terminate after an even number of steps, so the process does indeed define a permutation having only even-length cycles. In the other direction, suppose $\tau \in S_{2k}$ has only even-length cycles. On a given cycle of $\tau$, find the smallest element $x$ and define
\[
  x
  \xleftrightarrow{\sigma} x^{\tau}
  \xleftrightarrow{\sigma'} x^{\tau^2}
  \xleftrightarrow{\sigma} x^{\tau^3}
  \xleftrightarrow{\sigma'} \cdots
  \xleftrightarrow{\sigma'} x.
\]
These maps are clearly mutually inverse.

Now note that $\langle \sigma, \sigma'\rangle$ is transitive if and only if $\tau$ is a $2k$-cycle. Thus
\begin{align*}
  p(k)
  &= \P(\langle \sigma, \sigma'\rangle~\text{transitive}) \\
  &= \frac{|\{\tau: \tau~\text{is a $2k$-cycle}\}|}{|\{\sigma: \sigma~\text{has cycle type}~(2^k)\}|^2} \\
  &= \frac{(2k)!/(2k)}{((2k)!/(2^k k!))^2} \\
  &= \frac{4^k k!^2}{2k (2k)!} \\
  &\sim \frac{\sqrt{\pi}}{2} k^{-1/2}.
\end{align*}
In the last line we applied Stirling's approximation.
\end{proof}

Hence
\[
  \E N_k \asymp \pfrac{4 c_2 c'_2}{n^2}^k \frac1{k},
\]
and
\[
  \E N = \sum_{1 \leq k < n^{1/3}} \E N_k \asymp \sum_{1 \leq k < n^{1/3}} \pfrac{4 c_2 c'_2}{n^2}^k \frac1k.
\]
As long as $c_2, c'_2 = n/2 - o(n)$, this is $\omega(1)$.

In order to estimate $\Var N$, consider $\E(N(N-1))$. This is the expected number of ordered pairs of distinct sets $X, Y$ satisfying conditions (1) through (4). Let $E_X$ be the event that $X$ satisfies (1) through (4). If $|X| = 2k$, we have
\[
  \P(E_X) = \frac{\binom{c_2}{k} \binom{c'_2}{k}}{\binom{n}{2k}^2} p(k).
\]
Because of condition (4), $E_X \cap E_Y = \emptyset$ unless $X=Y$ or $X \cap Y = \emptyset$. If $X \cap Y = \emptyset$, and $|X| = 2k$ and $|Y| = 2l$ say, then
\begin{align*}
  \P(E_X \cap E_Y)
  &= \frac{\binom{c_2}{k,l,c_2-k-l} \binom{c'_2}{k,l,c'_2-k-l}}{\binom{n}{2k, 2l, n-2k-2l}^2} p(k) p(l) \\
  &= \P(E_X) \P(E_Y) (1 + O((k+l)^2 / n)).
\end{align*}
It follows from this that
\[
  \E(N(N-1)) \leq (\E N)^2 (1 + O(n^{-1/3})),
\]
and so
\[
  \Var N = \E N + \E(N(N-1)) - (\E N)^2 \leq \E N + O(n^{-1/3}) (\E N)^2.
\]
Thus
\[
  \P(N = 0) \leq \frac{\Var N}{\E N^2} \leq \frac1{\E N} + O(n^{-1/3}).
\]
Since $\E N = \omega(1)$, this implies $N > 0$ almost surely.

\subsection{Cases in which \texorpdfstring{$c_1 + c'_1 \gg n^{2/3}$}{c1 + c1' >> n\textasciicircum{2/3}}}

We now discuss some examples in which $c_1 + c'_1 \gg n^{2/3}$, say $c_1 \gg n^{2/3}$. First, suppose that $c_1^3 c'_3 \gg n^3$. In this case a straightforward modification of our argument in the $c_1^2 c'_2 \gg n^2$ case (Subsection~\ref{subsec:c1c2}) shows that there is likely a $3$-cycle $(xyz)$ of $\pi'$ such that $x,y,z$ are fixed points of $\pi$. Similarly if $c_1^4 c'_4 \gg n^4$, etc.

These examples are not too troubling. In fact, a slight elaboration of our proof shows that if $c_1 + c'_1 \leq n^{1-\Omega(1)}$ then it is enough to assume
\[
  (c_1 + c_2^{1/2} + \cdots + c_n^{1/n})
  (c'_1 + {c'_2}^{1/2} + \cdots + {c'_n}^{1/n}) = o(n)
\]
for the almost-sure problem, and
\begin{align*}
  &(c_1 + c_2^{1/2} + \cdots + c_n^{1/n})
  (c'_1 + {c'_2}^{1/2} + \cdots + {c'_n}^{1/n}) = O(n) \\
  &c_2 + c'_2 = n - \Omega(n)
\end{align*}
for the positive-probability problem. (Note that if $c_1 + c'_1 \leq n^{1-\Omega(1)}$ then really only boundedly many terms in these expressions are important.)

The following example is more troubling. Suppose $\pi$ is a random $n$-cycle and $\pi'$ a random transposition. Then up to conjugacy $\pi$ is $(1 2 \cdots n)$ and $\pi'$ is $(x n)$ for some $x$ chosen uniformly from $\{1, \dots, n-1\}$. Let $G = \langle \pi, \pi'\rangle$. Then $G$ is determined by $\gcd(x, n)$: if $\gcd(x, n)>1$ then $G$ is imprimitive, while if $\gcd(x, n)=1$ then $G = S_n$. Thus the probability that $G \geq A_n$ is $\varphi(n) / (n-1)$, which can be small or large depending on the arithmetic of $n$. This example illustrates the difficulty of pinning down a pithy if-and-only-if condition without any a priori hypothesis on $c_1 + c'_1$.

Finally, we note that if $c + c' > n + 1$ then there are in fact no $\pi \in \calC$ and $\pi'\in\calC'$ such that $\langle \pi, \pi'\rangle$ is transitive.

\section{Intransitive subgroups} \label{sec:intransitive}

Let $G = \langle \pi, \pi'\rangle$. As so often the case in random generation problems in $S_n$, the main obstruction is transitivity: if $G$ is transitive then very likely $G \geq A_n$ (we will prove this in Section~\ref{sec:transitive-subgroups}). In other words, our main job is to estimate the probability that $\pi$ and $\pi'$ have a common fixed set of some size $k$.

Let $N_k$ be the number of orbits of $G$ of size $k$, in other words the number of $k$-sets fixed by $G$ and on which $G$ acts transitively. Let
\[
  N = \sum_{1\leq k \leq n/2} N_k.
\]
Our goal is to estimate $\P(N = 0)$. We will accomplish this in two stages:
\begin{enumerate}
    \item We will bound $\lambda = \E N$. This will already establish what we need for the almost-sure problem, since $\P(N = 0) \geq 1 - \lambda$.
    \item We will prove a Poisson-type approximation for $N$ which shows that $\P(N=0) \approx e^{-\lambda}$.
\end{enumerate}

\subsection{Bounding \texorpdfstring{$\lambda = \E N$}{lambda = EN}}

There are many ways we can form a fixed $k$-set from the cycles of $\pi$ and $\pi'$. For any two partitions
\begin{align}
    k &= d_1 1 + d_2 2 + \cdots + d_k k, \label{eqn:d-partition}\\
    k &= d'_1 1 + d'_22 + \cdots + d'_k k, \label{eqn:d'-partition}
\end{align}
such that $d_j \leq c_j, d'_j \leq c'_j$ for each $j$, we can form a fixed $k$-set by taking $d_j$ $j$-cycles from $\pi$ and $d'_j$ $j$-cycles from $\pi'$, for each $j$. The probability that these $k$-sets coincide after $\pi'$ is conjugated by $\sigma$ is $1 / \binom{n}{k}$. Thus we have
\begin{equation} \label{eqn:ENk-formula}
  \E N_k = \sum_{\eqref{eqn:d-partition}, \eqref{eqn:d'-partition}} \frac{\binom{c_1}{d_1} \cdots \binom{c_k}{d_k} \binom{c'_1}{d'_1} \cdots \binom{c'_k}{d'_k}}{\binom{n}{k}} p(d_1, \dots, d_k ; d'_1, \dots, d'_k),
\end{equation}
where $p(d_1, \dots, d_k ; d'_1, \dots, d'_k)$ denotes the probability that random permutations $\tau, \tau' \in S_k$ with cycle types $(1^{d_1}, \dots, k^{d_k})$ and $(1^{d'_1}, \dots, k^{d'_k})$ generate a transitive subgroup.\footnote{Of course, this is the very thing we are trying to estimate in this section. This ``self-similarity'' is largely incidental: when we write $p(d_1, \dots, d_k ; d'_1, \dots, d'_k)$ we usually have in mind $k, d_1, \dots, d'_k$ of bounded size, whereas in the bigger picture of this section we are concerned with asymptotics.}

In this subsection we simply ignore $p(d_1, \dots, d_k ; d'_1, \dots, d'_k)$, bounding it by $1$. Thus from \eqref{eqn:ENk-formula} we have simply
\[
  \E N_k \leq \frac{ f_k f'_k }{\binom{n}{k}},
\]
where
\begin{equation}\label{fk}
  f_k = \sum_{\eqref{eqn:d-partition}} \binom{c_1}{d_1} \cdots \binom{c_k}{d_k}
\end{equation}
is the number of $k$-sets fixed by $\pi$, and similarly $f'_k$. (In the next subsection we will need the full force of \eqref{eqn:ENk-formula}.)

\begin{lemma} \label{lem:fkx-bound}
We have
\[
  f_k \leq \min_{x > 0}\, x^{-k} (1+x)^{c_1} (1+x^2)^{c_2} \cdots (1+x^k)^{c_k}.
\]
\end{lemma}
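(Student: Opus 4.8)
The plan is to recognize $f_k$, the number of $k$-sets fixed by $\pi$, as a coefficient in a generating function and then apply the trivial bound that a single coefficient of a power series with nonnegative coefficients is at most the value of the series at any positive point (divided by the appropriate power of the variable). Concretely, I would argue as follows. A $k$-set fixed by $\pi$ is obtained by choosing, for each $j$, a sub-collection of $d_j$ of the $c_j$ many $j$-cycles of $\pi$, subject to $\sum_j j\, d_j = k$. Hence $f_k = \sum_{\eqref{eqn:d-partition}} \prod_j \binom{c_j}{d_j}$ as in \eqref{fk}. Now observe that for any fixed $x>0$,
\[
  \prod_{j=1}^{k} (1+x^j)^{c_j}
  = \prod_{j=1}^{k} \sum_{d_j=0}^{c_j} \binom{c_j}{d_j} x^{j d_j}
  = \sum_{d_1,\dots,d_k} \binom{c_1}{d_1}\cdots\binom{c_k}{d_k}\, x^{d_1 1 + \cdots + d_k k},
\]
where the outer sum is over all tuples $(d_1,\dots,d_k)$ with $0 \leq d_j \leq c_j$. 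Every term on the right-hand side is nonnegative, and the terms with $d_1 1 + \cdots + d_k k = k$ — that is, exactly the partitions \eqref{eqn:d-partition} — contribute precisely $f_k x^k$. Therefore $f_k x^k \leq \prod_{j=1}^k (1+x^j)^{c_j}$, i.e. $f_k \leq x^{-k}\prod_{j=1}^k(1+x^j)^{c_j}$. Since this holds for every $x>0$, we may take the minimum over $x>0$, which is the claimed bound.

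The argument is essentially a one-line generating-function observation, so there is no serious obstacle; the only thing to be careful about is the bookkeeping that the full expansion of $\prod_j(1+x^j)^{c_j}$ ranges over all tuples $(d_1,\dots,d_k)$ with $0\le d_j\le c_j$ (not just those summing to $k$ in the weighted sense), and that isolating the coefficient of $x^k$ recovers exactly \eqref{fk}. One should also note that the minimum over $x>0$ is attained (or at least approached), since as $x\to 0^+$ the bound $x^{-k}\prod(1+x^j)^{c_j}\to\infty$ (assuming $k\ge 1$) and as $x\to\infty$ it also diverges, while the expression is continuous on $(0,\infty)$; but for the statement as written, merely taking the infimum suffices and no existence claim is needed.

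This lemma will presumably be used to bound $\E N_k \leq f_k f'_k/\binom{n}{k}$ by choosing $x$ (and the analogous $x'$ for $f'_k$) cleverly — likely $x \asymp (k/n)^{?}$ or tuned to balance the dominant terms coming from $c_1$ and $c_2$ — so that summing over $k$ yields the desired estimate $\lambda = o(1)$ or $\lambda = O(1)$ under the hypotheses of Theorem~\ref{main-thm}. The present statement, however, is just the clean preparatory inequality, and the proof is complete once the coefficient-extraction identity above is recorded.
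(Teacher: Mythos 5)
Your proof is correct and is essentially identical to the paper's: both expand $\prod_j(1+x^j)^{c_j}$, note that the terms with $\sum_j j d_j = k$ contribute exactly $f_k x^k$, and drop the remaining nonnegative terms. The extra remarks about attainment of the minimum are unnecessary but harmless.
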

\begin{proof}
This follows from
\begin{align*}
  f_k x^k
  &= \sum_{\eqref{eqn:d-partition}} \binom{c_1}{d_1} \cdots \binom{c_k}{d_k} x^{d_1 1 + \cdots + d_k k} \\
  &\leq \sum_{d_1, \dots, d_k\geq 0} \binom{c_1}{d_1} \cdots \binom{c_k}{d_k} x^{d_1 1 + \cdots + d_k k} \\
  &= (1+x)^{c_1} (1+x^2)^{c_2} \cdots (1+x^k)^{c_k}.\qedhere
\end{align*}
\end{proof}

Define the function $h$ by
\[
  h(x) = x \log \frac1x + (1-x) \log\frac1{1-x}.
\]
It is well known that
\[
  k^{-1/2} e^{h(k/n)n} \ll \binom{n}{k} \leq e^{h(k/n)n} \qquad (1\leq k\leq n/2).
\]
The latter inequality follows from
\[
  \binom{n}{k} x^k \leq (1+x)^n,
\]
with $x = k/(n-k)$. The former inequality follows from Stirling's approximation.

\begin{lemma} \label{fk-bound}
For any $\eps \in (0, 1/2)$ we have
\[
  f_k \leq
  \pfrac{c_1}{k}^k e^{O(\eps^{-2}k)} +
  e^{h\pfrac{k}{2c_2} c_2 + \eps k} +
  \pfrac{n}{k}^{k/3} e^{O(\eps^{-2}k)}
  .
\]
\end{lemma}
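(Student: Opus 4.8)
The goal is to convert the clean but implicit bound of Lemma \ref{lem:fkx-bound} into an explicit three-term estimate, the three terms coming from three different "regimes'' for how the $k$-set is assembled from cycles of $\pi$. My plan is to split the sum \eqref{fk} according to which cycle lengths contribute the bulk of the $k$ points: the fixed points ($j=1$), the $2$-cycles ($j=2$), and the "long'' cycles ($j\geq 3$). Concretely, write $k = a + b + c$ where $a = d_1$ is the number of fixed points used, $b = 2 d_2$ the number of points coming from $2$-cycles, and $c = \sum_{j\geq 3} j d_j$ the rest; at least one of $a, b, c$ is $\geq k/3$. I would bound $f_k$ by a sum of three partial sums, $f_k \le f_k^{(1)} + f_k^{(2)} + f_k^{(3)}$, where in the $i$-th we restrict to tuples with the $i$-th block of size $\geq k/3$, and estimate each separately.

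For the first regime ($a \ge k/3$): here most of the $k$-set is fixed points, so the dominant factor is $\binom{c_1}{a} \le (ec_1/a)^a \le (ec_1/(k/3))^k$ up to the contribution of the remaining $k - a \le 2k/3$ points, which can be crudely bounded by $2^{O(k)}$ worth of ways (the number of sub-partitions of an integer $\le 2k/3$ is $e^{O(\sqrt k)} = e^{O(k)}$, and each binomial is at most $n^{k}$... no — this needs care). Rather than the crude $n^k$, I would apply Lemma \ref{lem:fkx-bound} again but optimized: choosing $x \asymp k/c_1$ in that lemma already gives $f_k \le (c_1/k)^k e^{O(k)} \cdot \prod_{j\ge 2}(1 + (k/c_1)^j)^{c_j}$, and when $c_1 \gg n^{1/2}$ (the relevant case, since otherwise the first term dominates trivially) the tail product is $e^{o(k)}$; the $e^{O(\eps^{-2}k)}$ slack absorbs the $e^{O(k)}$. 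This gives the $(c_1/k)^k e^{O(\eps^{-2}k)}$ term. For the third regime ($c \ge k/3$, i.e. at least $k/3$ points sit in cycles of length $\ge 3$): there are at most $n^{c/3} \le n^{k/3}$ wait — the number of ways to choose $d_j$ cycles of length $j$ from $c_j$ available, summed over partitions of $c$ into parts $\ge 3$, is at most $\prod_j \binom{c_j}{d_j}$, and crucially each length-$j$ cycle "uses up'' $j \ge 3$ of the $c$ points, so the number of cycles used is $\le c/3 \le k/3$; bounding $\binom{c_j}{d_j} \le c_j^{d_j} \le n^{d_j}$ and $\sum d_j \le k/3$ gives $(n)^{k/3}$ choices for which cycles, times $e^{O(\sqrt k)}$ for the partition count, times the first-block and second-block factors which in this regime are $\le e^{O(\eps^{-2}k)}$-type corrections. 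Normalizing by the $(k)^{?}$ — actually the stated term is $(n/k)^{k/3}$, so I should keep one factor of $(1/k)$ per cycle: using $\binom{c_j}{d_j} \le (n/d_j)^{d_j} \cdot e^{d_j}$ doesn't directly give $1/k$; instead note $\binom{c_j}{d_j} \le \binom{n}{\sum d_j} \le (en/(c/3))^{c/3} \le (3en/k)^{k/3}$ by grouping, which yields the $(n/k)^{k/3} e^{O(k)}$ term.

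For the second, most delicate regime ($b \ge k/3$, dominated by $2$-cycles): the relevant factor is $\binom{c_2}{d_2}$ with $d_2 \ge k/6$, and we must compare $d_2$ to $c_2$. Writing $d_2 = b/2$ and using $\binom{c_2}{d_2} \le e^{h(d_2/c_2) c_2}$ (the standard entropy bound, valid since $d_2 \le c_2$), with $d_2 \le k/2$ this is at most $e^{h(k/(2c_2)) c_2}$ provided $k/(2c_2) \le 1/2$ and $h$ is increasing on $[0,1/2]$ — so $e^{h(k/(2c_2)) c_2}$ is the main contribution once $d_2 = k/2$ is the extreme case; one checks $h(d_2/c_2)c_2$ is maximized over $0\le d_2 \le \min(k/2, c_2)$ either at $d_2 = k/2$ (giving the stated $h(k/(2c_2))c_2$) or near $d_2 = c_2/2$ (giving $c_2 \log 2 = O(k)$, absorbed). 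The leftover $k - b \le 2k/3$ points distributed among fixed points and long cycles contribute, as in the other cases, a factor $e^{O(\eps^{-2}k)}$ — here is where I'd need to be a little careful that this leftover factor really is subexponential-in-$k$ and doesn't secretly hide a $(c_1/k)^{2k/3}$; but since we're in the regime $b\ge k/3$, the fixed-point contribution uses $\le 2k/3$ points from a pool of $c_1$, and if $c_1 \le n^{2/3}$ hmm — actually the lemma as stated has no hypothesis on $c_1$, so the $\eps k$ in the exponent of the middle term and the $O(\eps^{-2}k)$ in the others must be doing real work. I expect \textbf{this bookkeeping of the cross-terms} — showing that whichever block is large, the other two blocks contribute only $e^{O(\eps^{-2}k)}$ or $e^{\eps k}$ and get swallowed — to be the main obstacle; the natural device is to apply Lemma \ref{lem:fkx-bound} with a carefully chosen $x$ (roughly $x \asymp (k/c_1)$ or $x\asymp (k/c_2)^{1/2}$ depending on the regime) so that the generating-function bound automatically packages all the small contributions, and then use $h(y) \le y\log(e/y)$ and convexity to extract the named leading terms. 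I would organize the write-up as: (i) split $k = a+b+c$; (ii) in each of the three cases apply Lemma \ref{lem:fkx-bound} with the regime-appropriate $x$ and simplify via the entropy estimates above; (iii) collect the three bounds.
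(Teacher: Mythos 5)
Your plan splits the \emph{sum} \eqref{fk} according to which block of summation variables ($d_1$, $d_2$, or the cycles of length $\geq 3$) accounts for at least $k/3$ of the points, and then hopes the other two blocks contribute only $e^{O(\eps^{-2}k)}$ (or $e^{\eps k}$ for the middle term). That cross-term bookkeeping, which you yourself flag as the main obstacle, is exactly what is missing, and it does not come for free: within the regime $d_1\geq k/3$, say, the remaining $\leq 2k/3$ points can sit in $2$-cycles and contribute a factor like $\binom{c_2}{k/3}\approx \pfrac{c_2}{k}^{k/3}e^{O(k)}$, which is not $e^{O(\eps^{-2}k)}$; dominating such mixed terms by one of the three stated terms forces a comparison between the \emph{parameters} $c_1/k$, $(c_2/k)^{1/2}$, $(c/k)^{1/3}$ — and that comparison, not the split of the sum, is the actual engine of the proof. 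The paper does precisely this: with $k\leq 3c/2$ (the case $k\geq 3c/2$ giving $f_k\leq 2^c\leq 2^{2k/3}$, absorbed by the third term), it applies Lemma~\ref{lem:fkx-bound} in the form $f_k\leq x^{-k}(1+x)^{c_1}(1+x^2)^{c_2}(1+x^3)^{c}$ and cases on which of $c_1/k$, $\tfrac12\eps(c_2/k)^{1/2}$, $(c/k)^{1/3}$ is largest, choosing $x=k/c_1$, $x=\bigl(k/(2c_2-k)\bigr)^{1/2}$, or $x=(k/c)^{1/3}$ respectively; the case hypotheses are what turn the cross terms into $O(\eps^{-2}k)$ (e.g.\ $c_1/k\geq\tfrac12\eps(c_2/k)^{1/2}$ gives $(k/c_1)^2c_2\leq 4\eps^{-2}k$) or into $\eps k$ in the middle case. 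This also explains the role of $\eps$, which you correctly sense "must be doing real work'' but never supply: the middle term must carry only $e^{\eps k}$ slack (this precision is used later via concavity of $h$), and buying it costs the $\eps^{-2}$ in the other two exponents via the $\tfrac12\eps$-weighted threshold.

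Beyond the structural gap, two of your shortcuts are false as stated. In regime 1 you claim that when $c_1\gg n^{1/2}$ the tail product $\prod_{j\geq2}(1+(k/c_1)^j)^{c_j}$ is $e^{o(k)}$: take $c_1\asymp n^{2/3}$, $c_2\asymp n$, $k\asymp n^{2/3}$, and $(k/c_1)^2c_2\asymp n\gg k$, so the tail is $e^{\omega(k)}$; likewise "otherwise the first term dominates trivially'' is not a valid reduction. In regimes 2 and 3 the leftover fixed-point factor can be as large as $(c_1/k)^{2k/3}$, which is not an "$e^{O(\eps^{-2}k)}$-type correction'' and cannot be absorbed into $e^{\eps k}$; handling it again requires the parameter trichotomy above. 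So while your three regimes correctly mirror the three terms of the bound and you have the right ingredients (Lemma~\ref{lem:fkx-bound}, the entropy bound $\binom{c_2}{d_2}\leq e^{h(d_2/c_2)c_2}$, Vandermonde-type grouping for long cycles), the proposal as written has a genuine gap at its central step.
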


Note that the middle term here is of the form
\[
  \pfrac{c_2}{k}^{k/2} e^{O(k)}.
\]
The more precise bound is important to us, however.

\begin{proof}
First note that we may assume $k \leq 3c/2$, for if $k \geq 3c/2$ then directly from \eqref{fk} we have
\[
  f_k \leq 2^c \leq 2^{2k/3},
\]
which is subsumed by the third term in the claim.

Now assume $k \leq 3c/2$. By the previous lemma we have, if $x \leq 1$,
\begin{equation} \label{eqn:fkx-bound}
  f_k \leq x^{-k} (1+x)^{c_1} (1+x^2)^{c_2} (1+x^3)^{c}.
\end{equation}
The unique minimizer of this expression satisfies
\[
-\frac{k}{x} + \frac{c_1}{1+x} + \frac{2c_2 x}{1+x^2} + \frac{3c x^2}{1+x^3} = 0,
\]
and hence
\[
  c_1 \pfrac{x}{1+x} + 2c_2 \pfrac{x^2}{1+x^2} + 3c \pfrac{x^3}{1+x^3} = k.
\]
The solution to this equation satisfies $x \leq 1$ (since $3c/2 \geq k$), and is comparable to the smallest of $k/c_1$, $(k/c_2)^{1/2}$, $(k/c)^{1/3}$. We consider these three cases separately. More precisely, because we need to take extra care with the $c_2$ dependence, we consider three cases depending on which of
\[
  c_1/k,\quad \frac12 \eps (c_2 / k)^{1/2},\quad (c / k)^{1/3}
\]
is largest.
\begin{enumerate}
    \item First suppose $c_1 / k \geq \frac12 \eps (c_2 / k)^{1/2}, (c / k)^{1/3}$. Take $x = k / c_1$ in \eqref{eqn:fkx-bound}. The result is
    \[
      f_k \leq \pfrac{c_1}{k}^k e^{\pfrac{k}{c_1} c_1 + \pfrac{k}{c_1}^2 c_2 + \pfrac{k}{c_1}^3 c} \leq \pfrac{c_1}{k}^k e^{O(\eps^{-2} k)}.
    \]
    \item Next suppose $\frac12 \eps(c_2 / k)^{1/2} \geq c_1/k, (c/k)^{1/3}$. Note the latter condition implies $k \leq \eps^6 c_2 \leq c_2 / 2$. Take
    \[
      x = \pfrac{k}{2c_2 - k}^{1/2}
    \]
    in \eqref{eqn:fkx-bound}. The result is
    \[
      f_k
      \leq e^{h \pfrac{k}{2c_2} c_2 + \pfrac{k}{2c_2-k}^{1/2} c_1 + \pfrac{k}{2c_2-k}^{3/2} c}
      \leq e^{h \pfrac{k}{2c_2} c_2 + \eps k}.
    \]
    \item Finally, suppose $(c / k)^{1/3} \geq \frac12 \eps (c_2 / k)^{1/2}, c_1 / k$. Take $x = (k/c)^{1/3}$ in \eqref{eqn:fkx-bound}. The result is
    \[
      f_k \leq \pfrac{c}{k}^{k/3} e^{\pfrac{k}{c}^{1/3} c_1 + \pfrac{k}{c}^{2/3} c_2 + \pfrac{k}{c} c} \leq \pfrac{c}{k}^{k/3} e^{O(\eps^{-2} k)}.
    \]
\end{enumerate}
Since $c \leq n$ this proves the lemma.
\end{proof}

\begin{lemma} \label{lem:Nk-bound-small-k}
We have
\[
  \E N_k \ll \left(
  \frac{(c_1 + c_2^{1/2}) (c'_1 + {c'_2}^{1/2})}{n} + \frac{c_1 + c'_1}{n^{2/3}} + \pfrac{k}{n}^{1/6}
  \right)^k e^{O(k)}.
\]
Moreover, if $c_2 + c'_2 = n - \Omega(n)$ then
\begin{align*}
  \E N_k
  &\ll \frac1{k^{k/3}}
  \left(
  \frac{ c_1 c'_1 + c_1 {c'_2}^{1/2} + {c_2}^{1/2} c'_1}{n} + \frac{c_1 + c'_1}{n^{2/3}}
  \right)^k O(1)^k \\
  &\qquad + (1 - \Omega(1))^k + O(k/n)^{k/6}.
\end{align*}
\end{lemma}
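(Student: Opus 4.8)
The plan is to feed the bound from Lemma~\ref{fk-bound} (applied to both $\pi$ and $\pi'$) into the inequality $\E N_k \leq f_k f'_k / \binom{n}{k}$, and then expand the resulting product of three-term bounds into nine cross terms, each of which we estimate using the lower bound $\binom{n}{k} \gg k^{-1/2} e^{h(k/n)n}$.

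First I would write $f_k \leq F_1 + F_2 + F_3$ with $F_1 = (c_1/k)^k e^{O(\eps^{-2}k)}$, $F_2 = e^{h(k/(2c_2))c_2 + \eps k}$, $F_3 = (n/k)^{k/3} e^{O(\eps^{-2}k)}$ (and similarly $F'_j$ for $\pi'$), so that $\E N_k \ll \sum_{i,j} F_i F'_j / \binom{n}{k}$. The ``diagonal'' terms are the cleanest: $F_1 F'_1 / \binom{n}{k} \ll (c_1 c'_1 / k^2)^k e^{O(k)} \cdot k^{1/2} e^{-h(k/n)n}$, and since $e^{-h(k/n)n} = (k/n)^k (1-k/n)^{-(n-k)} \leq (k/n)^k e^{O(k)}$, this is $\ll (c_1 c'_1 / (kn))^k e^{O(k)} \cdot k^{1/2} \ll (c_1 c'_1/n)^k e^{O(k)}$. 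The term $F_3 F'_3 / \binom{n}{k} \ll (n/k)^{2k/3} (k/n)^k e^{O(\eps^{-2}k)} = (k/n)^{k/3} e^{O(\eps^{-2}k)}$; fixing $\eps$ to a small constant this gives the $(k/n)^{k/6}$-type term (with room to spare — indeed one gets $k/3$ in the exponent, matching the improved bound). The mixed terms $F_1 F'_3$, $F_3 F'_1$ contribute $(c_1/k)^k (n/k)^{k/3} (k/n)^k e^{O(k)} = (c_1 / n^{2/3})^k \cdot (\text{bounded})^k \cdot (\text{something} \leq 1)$, after checking the powers of $k$ and $n$ balance; this is the source of the $c_1/n^{2/3}$ and $c'_1/n^{2/3}$ terms. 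The $F_2$-involving terms require the one genuinely delicate estimate: one must show $h(k/(2c_2)) c_2 \leq k \log(c_2/k) + O(k)$ for $k \leq c_2/2$, equivalently (writing $t = k/(2c_2) \leq 1/2$) that $c_2 h(t) = k\big(\tfrac1t \log\tfrac1{2t} + \tfrac{1-t}{t}\log\tfrac1{1-t}\big)/ (1/(2)) \cdot \tfrac12$... more simply $c_2 h(t) \leq k \log(c_2/k) + O(k)$ since $h(t) \leq t\log(1/t) + t$ and $t\log(1/t) \cdot c_2 = \tfrac{k}{2}\log(2c_2/k)$; so $F_2 \leq (c_2/k)^{k/2} e^{O(k)}$, and then $F_2 F'_2/\binom{n}{k} \ll (c_2 c'_2)^{k/2} k^{-k}(k/n)^k k^{1/2} e^{O(k)} \ll (c_2^{1/2} c_2'^{1/2}/n)^k e^{O(k)}$, while $F_1 F'_2 / \binom{n}{k} \ll (c_1 c_2'^{1/2}/n)^k e^{O(k)}$ and $F_2 F'_3/\binom{n}{k} \ll (c_2^{1/2}/k^{?})\cdots$ folds into the $n^{-2/3}$ or $(k/n)^{1/6}$ terms after bounding $(c_2^{1/2}/k^{1/2})^k (n/k)^{k/3}(k/n)^k = (c_2^{1/2} n^{-2/3})^k(\text{bdd})^k \leq (n^{1/2} n^{-2/3})^k (\text{bdd})^k$ using $c_2 \leq n$. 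Collecting the nine terms and using $(a+b)^k \asymp \max(a,b)^k$ up to $2^k = e^{O(k)}$ gives the first displayed bound.

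For the second, sharper bound under the hypothesis $c_2 + c'_2 = n - \Omega(n)$, the new input is that $h(k/(2c_2))c_2$ can be replaced by a bound with an extra saving: since $c_2 \leq n - c'_2 - \Omega(n) \leq (1-\delta)n$ for a constant $\delta$, one gets $F_2/\binom{n}{k}^{1/2}$-type factors carrying a $(1-\Omega(1))^k$ geometric gain in exactly the regime where the $2$-cycle terms would otherwise dominate; concretely, the term $F_2 F'_2 / \binom{n}{k}$ becomes $(4 c_2 c'_2 / n^2)^{k/2} e^{O(k)} k^{1/2}$, and $4c_2 c'_2 \leq (c_2 + c'_2)^2 \leq (n - \Omega(n))^2 = (1-\Omega(1))n^2$, giving the $(1-\Omega(1))^k$ summand. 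Meanwhile the diagonal $c_1 c'_1$ term acquires the $k^{-k/3}$ prefactor because — as I would check by rerunning case (1)/(3) of Lemma~\ref{fk-bound} more carefully, or simply by using $\binom{n}{k} \gg (n/k)^k$ rather than $e^{h(k/n)n}$ only when it helps — one can extract an additional $(k/\text{something})^{-k/3}$; the cleanest route is to note $\binom{n}{k} \geq (n/k)^k$ and write $f_k f'_k / \binom{n}{k} \leq f_k f'_k (k/n)^k$, then bound $f_k \leq (c_1/k)^k e^{O(k)} + (c_2/k)^{k/2}e^{O(k)} + (n/k)^{k/3}e^{O(k)}$ and observe that against the full $(k/n)^k$ the $F_1F_1'$ and mixed terms each carry surplus powers of $n/k$ that convert to $k^{-k/3}$ after pulling out one factor of $(k/n)^{k/3}$ from each of two $(n/k)^{k/3}$-free terms — I expect this bookkeeping to be the main obstacle, as it is purely a matter of tracking exponents of $k$ and $n$ across nine terms without slippage.

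The only real subtlety, and where I would spend the most care, is the $F_2$ estimate: getting $h(k/(2c_2)) c_2 \leq \tfrac{k}{2}\log(c_2/k) + O(k)$ with the right constant, and separately the improvement $h(k/(2c_2))c_2 + \eps k - h(k/n)n \leq -\Omega(k)$ when $c_2 \leq (1-\delta)n$, both require expanding $h$ near $0$ and comparing $c_2 h(k/(2c_2))$ with $n h(k/n)$ — essentially the monotonicity/concavity fact that $x \mapsto x\, h(k/(2x))$ is increasing, so replacing $c_2$ by something of order $n$ costs a definite proportion. Everything else is routine combination via $(a_1 + \cdots + a_9)^{?} \asymp_9 \max_i a_i$, $\binom{n}{k} \asymp k^{-1/2}(n/k)^k(1-k/n)^{-(n-k)}$, and $\eps \to$ a fixed small constant to absorb the $e^{O(\eps^{-2}k)}$ factors into $e^{O(k)}$; I would state the lemma's two displays as the conclusion after this expansion, noting that in the first display the $e^{O(k)}$ can sit outside the parenthesised max since each of the nine contributions is of the claimed shape times $e^{O(k)}$.
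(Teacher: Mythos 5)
Your treatment of the first display is essentially the paper's own argument: the same decomposition $\E N_k \le f_kf_k'/\binom{n}{k}$, the same three-term bound from Lemma~\ref{fk-bound} expanded into cross terms, with $e^{h(k/(2c_2))c_2} \le (c_2/k)^{k/2}e^{O(k)}$ used to crush the $2$-cycle factors; apart from minor exponent bookkeeping (e.g.\ in your $F_2F_3'$ computation the powers of $k$ give an extra $k^{k/6}$, which in fact lands you exactly at $(k/n)^{k/6}$ rather than below it), that part is fine.

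The genuine gap is in the ``Moreover'' bound, precisely at the $c_2,c_2'$ cross term. Your executed estimate is $F_2F_2'/\binom{n}{k} \ll k^{1/2}\pfrac{4c_2c_2'}{n^2}^{k/2}e^{O(k)}$ followed by $4c_2c_2' \le (c_2+c_2')^2 \le (1-\delta)^2n^2$. But the hypothesis only provides $c_2+c_2' \le (1-\delta)n$ for \emph{some} constant $\delta>0$, possibly small, while the $e^{O(k)}$ factor (coming from the slack in $h(t) \le t\log(1/t)+t$, from Stirling, and from $e^{2\eps k}$) carries an absolute constant in the exponent; hence $(1-\delta)^k e^{O(k)}$ is in general not $(1-\Omega(1))^k$ and the claimed middle summand does not follow. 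This is exactly why the paper remarks after Lemma~\ref{fk-bound} that the middle term ``is of the form $(c_2/k)^{k/2}e^{O(k)}$'' but that ``the more precise bound is important to us'': one must keep the entropic form $e^{h(k/(2c_2))c_2+\eps k}$ for this one term and compare it directly with $h(k/n)n$ via concavity, $h(x) \le h(k/n) + (x-k/n)\log(n/k-1)$, which yields the exact identity-level cancellation
\[
  h\pfrac{k}{2c_2}c_2 + h\pfrac{k}{2c_2'}c_2' - h\pfrac{k}{n}n \le -(n-c_2-c_2')\log\pfrac{n}{n-k} \le -(n-c_2-c_2')\frac{k}{n} = -\Omega(k),
\]
leaving only the tunable slack $2\eps k$, which is then beaten by choosing $\eps$ small relative to $\delta$ (a point you also need but do not address: fixing $\eps$ to an absolute constant is not enough here). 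Your final paragraph does gesture at the right idea (comparing $c_2\,h(k/(2c_2))$ with $n\,h(k/n)$ via monotonicity/concavity of $x \mapsto x\,h(k/(2x))$), and a careful version of that — using concavity or the derivative bound $\frac{d}{ds}\bigl(s\,h(k/s)\bigr) = \log\frac{s}{s-k} \ge k/s$ to exploit the joint condition on $c_2+c_2'$ — does work and is equivalent to the paper's computation; but as written you never carry it out, and the computation you do carry out loses exactly the constant-per-$k$ precision that the second statement requires.
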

\begin{proof}
We have
\[
  \E N_k \leq f_k f'_k / \binom{n}{k}.
\]
Bounding each of $f_k$ and $f'_k$ using the previous lemma and expanding the product, we get
\begin{align*}
    \E N_k 
    &\ll \frac1{k^k} \pfrac{c_1 c'_1}{n}^k e^{O(\eps^{-2} k)} \\
    &\qquad + \frac1{k^{k/2}} \pfrac{c_1 {c_2'}^{1/2} + {c_2}^{1/2} c'_1}{n}^k e^{O(\eps^{-2} k)} \\
    &\qquad + \frac1{k^{k/3}} \left(\frac{c_1 + c'_1}{n^{2/3}} \right)^k e^{O(\eps^{-2} k)} \\
    &\qquad + \left(\frac{k}{n} \right)^{k/6} e^{O(\eps^{-2} k)} \\
    &\qquad + k^{1/2} e^{h\left(\frac{k}{2c_2}\right) c_2 + h\left(\frac{k}{2c_2'}\right)c'_2 - h(k/n)n + 2 \eps k}.
\end{align*}
Here we used the simple bound $\binom{n}{k} \geq (n/k)^k$ for every term except the one involving both $c_2$ and $c'_2$, where we used the more precise bound $\binom{n}{k} \gg k^{-1/2} e^{h(k/n) n}$. Additionally, for the other terms involving $c_2$ or $c'_2$ we used
\[
  e^{h\pfrac{k}{2c_2}c_2} \leq \pfrac{c_2}{k}^{k/2} e^{O(k)}.
\]
If we apply this bound also to the $c_2, c'_2$ term, and bound various things crudely, then we get the first statement in the lemma.

To get the second statement, the only term which requires further comment is the $c_2, c'_2$ term. Since $h$ is concave and $h'(x) = \log(1/x - 1)$, we have
\[
  h(x) \leq h(k/n) + (x - k/n) \log(n/k - 1).
\]
Hence, by a short calculation,
\begin{align*}
  h\pfrac{k}{2c_2} c_2 + h\pfrac{k}{2c'_2} c'_2 - h\pfrac{k}{n} n
  &\leq -(n - c_2 - c'_2) \log \pfrac{n}{n-k} \\
  &\leq -(n - c_2 - c'_2) k/n.
\end{align*}
Assuming $c_2 + c'_2 = n - \Omega(n)$, this is $-\Omega(k)$, so the lemma follows from taking $\eps$ appropriately small.
\end{proof}

The above lemma suffices as long as $k = o(n)$. For larger $k$ it is convenient to argue a little differently (and more simply).

\begin{lemma} \label{lem:Nk-bound-large-k}
Assume $c_1 + c'_1 = o(n)$ and $c+c' \leq n - \Omega(n)$. Then for $k\leq n/2$ we have
\[
  \E N_k \leq e^{-\Omega(k) + o(n)}.
\]
\end{lemma}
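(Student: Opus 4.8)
The plan is to bound $\E N_k = f_k f'_k / \binom{n}{k}$ by controlling $f_k$ and $f'_k$ in a regime where $k$ is a constant fraction of $n$, where the sharp estimates of Lemma~\ref{fk-bound} are no longer the most convenient tool. Instead I would go back to the generating-function bound of Lemma~\ref{lem:fkx-bound}: $f_k \leq x^{-k}\prod_{j}(1+x^j)^{c_j}$ for any $x > 0$, and pick a single value of $x$ (say $x = 1$, or $x$ slightly less than $1$) to get a crude but uniform bound. With $x = 1$ we get $f_k \leq 2^{c}$ and $f'_k \leq 2^{c'}$, hence $\E N_k \leq 2^{c+c'}/\binom{n}{k}$. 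Since $\binom{n}{k} \geq e^{h(k/n)n}/k^{1/2}$ and $h(k/n) \geq h(1/2)\cdot(\text{something})$ — more precisely $h$ is bounded below by a positive constant times $k/n$ on, say, $[\alpha, 1/2]$ for fixed $\alpha>0$ — this already handles the range $k \asymp n$ provided $c + c'$ is a small enough fraction of $n$. But the hypothesis only gives $c + c' \leq n - \Omega(n)$, which is not strong enough for the $x=1$ choice alone, so the $x$ must be chosen more carefully.

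The key observation making the sharper choice work is that we should exploit the hypothesis $c_1 + c'_1 = o(n)$ separately: the fixed points contribute a factor $(1+x)^{c_1}$ which, for $x$ close to $1$, is only $2^{o(n)}$, essentially negligible. For the remaining cycles (lengths $\geq 2$), I would use the bound $(1+x^j)^{c_j} \leq (1+x^2)^{c_j}$ for $j \geq 2$ when $x \leq 1$, giving $f_k \leq x^{-k}(1+x)^{c_1}(1+x^2)^{c - c_1}$. Writing $x^2 = t$, this is $\leq t^{-k/2}(1+t)^{c-c_1} \cdot 2^{o(n)}$, and optimizing in $t$ gives $f_k \leq \binom{c-c_1}{\lceil k/2\rceil}^{O(1)} \cdot 2^{o(n)}$ roughly, or more cleanly $f_k \leq e^{h(k/(2c))\cdot c \cdot (1+o(1))}\cdot 2^{o(n)}$ type bounds — but actually the cleanest route is: with $t$ chosen so that $t/(1+t) \cdot (c - c_1) = k/2$, i.e. matching the expected number of points covered, we get $f_k \le (2c/k)^{k/2} e^{O(k)} 2^{o(n)}$ when $k \le c$, and $f_k \le 2^{c}2^{o(n)}$ when $k > c$. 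Multiplying the bounds for $f_k$ and $f'_k$ and dividing by $\binom{n}{k} \geq e^{h(k/n)n - O(\log n)}$, the exponents combine to something of the form $n\big[\tfrac{c}{n}h(\tfrac{k}{2c}) + \tfrac{c'}{n}h(\tfrac{k}{2c'}) - h(\tfrac{k}{n})\big] + O(k) + o(n)$ (interpreting the $h$ terms as their trivial bounds $\tfrac{k}{2c}\log\tfrac{2ce}{k}$ when $k > c$ etc.), and the bracket is the crucial quantity.

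The main obstacle is showing that this bracketed exponent is $-\Omega(k)$ uniformly over $\Omega(n) \le k \le n/2$, using only $c + c' \le n - \Omega(n)$ and $c_1+c'_1 = o(n)$. I would handle it by a convexity/monotonicity argument analogous to the one already used in the proof of Lemma~\ref{lem:Nk-bound-small-k}: the function $a \mapsto a\, h(k/(2a))$ is increasing in $a$ (for $2a \geq k$), so replacing $c$ by anything larger only hurts, and one wants to show $c\,h(\tfrac{k}{2c}) + c'\,h(\tfrac{k}{2c'})$ is bounded by $h(\tfrac{k}{n})n$ minus a linear-in-$k$ term. Using $h(x) \leq h(k/n) + (x - k/n)\log(n/k - 1)$ (tangent-line bound, $h$ concave) applied at the points $x = k/(2c)$ and $x = k/(2c')$ with weights $c$ and $c'$, the linear terms telescope: the sum is at most $h(\tfrac{k}{n})(c + c') + (k - \tfrac{(c+c')k}{n})\log(\tfrac nk - 1)$, and since $c + c' \le n - \Omega(n)$ one should extract $-\Omega(k)\log(n/k)$, which is $-\Omega(k)$ on the range $k \le n/2$. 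The one point needing care is the crossover where $k$ exceeds $c$ or $c'$ (so the trivial $2^c$ bound kicks in for one factor); there one argues directly that $2^{c} e^{-h(k/n)n} \le e^{-\Omega(n)}$ whenever $c \le n/2 - \Omega(n)$ and $k \asymp n$, handling the leftover fixed-point factor with $c_1 = o(n)$. Once the exponent is pinned at $-\Omega(k) + o(n)$, the lemma follows; I would then remark that combining with Lemma~\ref{lem:Nk-bound-small-k} for $k = o(n)$ gives a bound on $\E N$ over the full range.
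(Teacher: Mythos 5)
Your overall plan is viable but it is not the paper's route, and as written it has two concrete soft spots. The paper's proof is shorter: it substitutes the single value $x=(k/(n-k))^{1/2}$ into the product bound $f_kf'_k\leq x^{-2k}(1+x)^{c_1+c'_1}(1+x^2)^{c+c'}$ from Lemma~\ref{lem:fkx-bound}, notes $x^{-2k}(1+x^2)^n=e^{h(k/n)n}$ cancels against $\binom{n}{k}$ up to $e^{O(\log k)}$, writes $(1+x^2)^{c+c'}=(1+x^2)^n(1-k/n)^{n-c-c'}\leq(1+x^2)^ne^{-k(n-c-c')/n}$, and absorbs $(1+x)^{c_1+c'_1}\leq e^{(2k/n)^{1/2}(c_1+c'_1)}=e^{o(n)}$. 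Your alternative — optimize $x$ separately for each factor to get entropy-type bounds and then compare $c\,h(k/(2c))+c'\,h(k/(2c'))$ with $h(k/n)n$ by the tangent-line bound — does work in principle: the telescoping you describe gives exactly $c\,h(\frac{k}{2c})+c'\,h(\frac{k}{2c'})-h(\frac kn)n\leq-(n-c-c')\log\frac{n}{n-k}\leq-(n-c-c')k/n$, the same computation the paper performs for the $c_2,c'_2$ term in Lemma~\ref{lem:Nk-bound-small-k}.

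The two gaps. First, you cannot afford the ``$+O(k)$'' slop in the exponent: the gain from the bracket is $-(n-c-c')k/n$, i.e.\ $-\delta k$ where $\delta$ is the unspecified constant in the hypothesis $c+c'\leq n-\Omega(n)$, and an error term $Ck$ with an absolute constant $C$ may well exceed $\delta k$, so the conclusion $-\Omega(k)$ does not follow from the exponent ``$n[\cdots]+O(k)+o(n)$'' as you wrote it. The fix is to keep the exact entropy form: for $k\leq c-c_1$ the constrained optimum is exactly $\min_{t\leq 1}t^{-k/2}(1+t)^{c-c_1}=e^{(c-c_1)h\left(\frac{k}{2(c-c_1)}\right)}$, so $f_k\leq 2^{c_1}e^{(c-c_1)h\left(\frac{k}{2(c-c_1)}\right)}$ with only an $e^{o(n)}$ fudge; this is precisely the point of the remark after Lemma~\ref{fk-bound} that the precise middle term, not $(c_2/k)^{k/2}e^{O(k)}$, is what is needed. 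Second, your treatment of the crossover $k>c$ (or $k>c'$) invokes ``$c\leq n/2-\Omega(n)$'', which does not follow from the hypotheses (only $c\leq k\leq n/2$ does; e.g.\ $c=n/2-1$, $c'$ small is allowed), and the displayed quantity $2^ce^{-h(k/n)n}$ has lost the other factor $f'_k$. The clean patch is to write $2^{c-c_1}=e^{(c-c_1)h(1/2)}$ and apply the same tangent-line bound at the point $1/2$: since $c-c_1\leq k$ and $\log(n/k-1)\geq0$, one gets $(c-c_1)h(1/2)\leq(c-c_1)h(k/n)+(k/2-(c-c_1)k/n)\log(n/k-1)$, the same inequality as in the main case, so the telescoping and the final bound $-(n-c-c')k/n+o(n)$ go through uniformly for all $k\leq n/2$. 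With these two repairs your argument gives the lemma; the paper's single-substitution proof avoids both issues.
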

\begin{proof}
From Lemma~\ref{lem:fkx-bound} we have, for $0 < x < 1$,
\[
  f_k f'_k \leq x^{-2k} (1+x)^{c_1 + c'_1} (1+x^2)^{c + c'}.
\]
Take $x = (k/(n-k))^{1/2}$. We have
\[
  (1+x)^{c_1 + c'_1} \leq e^{x(c_1 + c'_1)} \leq e^{(2k/n)^{1/2} (c_1 + c'_1)}
\]
and
\[
  (1+x^2)^{c+c'} = (1+x^2)^n (1-k/n)^{n - c - c'} \leq (1+x^2)^n e^{-k(n - c- c')/n}.
\]
Hence
\[
  f_k f'_k \leq e^{h(k/n)n} e^{(2k/n)^{1/2}(c_1 + c'_1) - k(n-c-c')/n},
\]
and
\[
  \E N_k \leq \frac{f_k f'_k}{\binom{n}{k}} \leq e^{O(\log k) + (2k/n)^{1/2} (c_1 + c'_1) - k(n-c-c')/n}.
\]
Assuming $c_1+c'_1 = o(n)$ and $c + c' \leq (1-\eps) n$, this is bounded by
\[
  e^{O(\log k) + o((2k n)^{1/2}) - \eps k} = e^{o(n) - \eps k},
\]
as claimed.
\end{proof}

The following theorem follows by combining the previous two lemmas.

\begin{theorem} \label{thm:main-ENk-bound}
$\ $
\begin{enumerate}
    \item If $c_1 + c'_1 = o(n^{2/3})$ and $(c_1 + {c_2}^{1/2}) (c'_1 + {c'_2}^{1/2}) = o(n)$ then $\E N = o(1)$.
    \item If $c_1 + c'_1 = O(n^{2/3})$, $(c_1 + {c_2}^{1/2}) (c'_1 + {c'_2}^{1/2}) = O(n)$, and $c_2 + c'_2 = n - \Omega(n)$, then $\E N = O(1)$.
\end{enumerate}
\end{theorem}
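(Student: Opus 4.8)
The plan is to sum the $\E N_k$ bounds over $k$, splitting the range $1 \le k \le n/2$ into a ``small $k$'' regime $1 \le k \le \delta n$ and a ``large $k$'' regime $\delta n < k \le n/2$, where $\delta > 0$ is a small constant to be chosen at the end. In the small range I would apply Lemma~\ref{lem:Nk-bound-small-k}; in the large range, Lemma~\ref{lem:Nk-bound-large-k}. For part~(1) we are given $c_1 + c_1' = o(n^{2/3})$ and $A := (c_1 + c_2^{1/2})(c_1' + {c_2'}^{1/2}) = o(n)$, so in particular $c, c' \le n - \Omega(n)$ is automatic (if, say, $c = n$ then $\pi$ has all fixed points and $c_1 = n$, contradicting $c_1 = o(n^{2/3})$; more carefully, $c_1 + c_2^{1/2} \gg n^{1/2}$ would force $A \gg n^{1/2}(c_1' + {c_2'}^{1/2})$, which is only $o(n)$ if $c_1' + {c_2'}^{1/2} = o(n^{1/2})$, and one checks this case directly). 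For part~(2), the hypothesis $c_2 + c_2' = n - \Omega(n)$ together with $c_1 + c_1' = O(n^{2/3}) = o(n)$ gives $c + c' \le n - \Omega(n)$ as well, after noting that cycles of length $\ge 3$ contribute at most $n/3$ each to $c, c'$ and the refined second bound of Lemma~\ref{lem:Nk-bound-small-k} is available.

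For the small range in part~(1): by Lemma~\ref{lem:Nk-bound-small-k}, $\E N_k \ll (A/n + (c_1+c_1')/n^{2/3} + (k/n)^{1/6})^k e^{O(k)}$. Under the hypotheses, $A/n = o(1)$ and $(c_1 + c_1')/n^{2/3} = o(1)$, and for $k \le \delta n$ we have $(k/n)^{1/6} \le \delta^{1/6}$. So the base of the exponential is at most $o(1) + \delta^{1/6}$; choosing $\delta$ small (but constant) and absorbing the $e^{O(k)}$, this base times $e^{O(1)}$ is $\le 1/2$ for all large $n$, whence $\sum_{1 \le k \le \delta n} \E N_k \ll \sum_k 2^{-k} = O(1)$. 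That gives $\E N = O(1)$ but not yet $o(1)$. To upgrade to $o(1)$ I would isolate the $k = 1$ term, for which $\E N_1 = c_1 c_1'/n \le A/n = o(1)$ directly, and note that for $2 \le k \le \delta n$ the base of the exponential is actually $o(1)$ (the term $(k/n)^{1/6}$ only reaches $\delta^{1/6}$ near $k = \delta n$, but for $k \le n^{1/2}$ say it is $o(1)$, and for $n^{1/2} \le k \le \delta n$ the factor $2^{-k}$ is already $o(1)$ summed); combining, $\sum_{2 \le k \le \delta n}\E N_k = o(1)$. For the large range, Lemma~\ref{lem:Nk-bound-large-k} gives $\E N_k \le e^{-\Omega(k) + o(n)} \le e^{-\Omega(\delta n) + o(n)} = e^{-\Omega(n)}$ for $k > \delta n$ (here $\Omega(k) \ge \Omega(\delta n) = \Omega(n)$ dominates the $o(n)$), and summing the at most $n/2$ such terms still gives $e^{-\Omega(n)} = o(1)$. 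Adding the two ranges proves part~(1).

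For part~(2) the argument is the same with the refined (second) bound of Lemma~\ref{lem:Nk-bound-small-k}: the base becomes $O((\,c_1 c_1' + c_1 {c_2'}^{1/2} + c_2^{1/2} c_1')/n + (c_1+c_1')/n^{2/3}\,) + (1 - \Omega(1)) + O(k/n)^{1/6}$-type terms. Note $c_1 c_1' + c_1{c_2'}^{1/2} + c_2^{1/2}c_1' \le A = O(n)$, so the first bracket is $O(1)$; the term $(1-\Omega(1))^k$ sums geometrically to $O(1)$; and the $k^{-k/3}$ prefactor on the first bracket forces $(O(1)/k^{1/3})^k$ which is summable to $O(1)$ and in fact $o(1)$ for $k \ge 2$, while the $k=1$ contribution is $O(1)$. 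The $O(k/n)^{k/6}$ term sums to $O(1)$ over $k \le \delta n$ after choosing $\delta$ small. The large range is handled by Lemma~\ref{lem:Nk-bound-large-k} exactly as before, using $c + c' \le n - \Omega(n)$. Hence $\E N = \sum_k \E N_k = O(1)$, completing part~(2).

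The main obstacle I anticipate is bookkeeping the interaction between the polynomial-in-$k$ prefactors ($k^{-k}$, $k^{-k/2}$, $k^{-k/3}$, and the lone $k^{1/2}$) and the various $e^{O(k)}$ and $e^{O(\eps^{-2}k)}$ factors when one wants the clean dichotomy ``$O(1)$ vs.\ $o(1)$'': one must fix $\eps$ (hence the $e^{O(\eps^{-2})}$ constant) \emph{before} choosing $\delta$, and check that the geometric decay survives. The other delicate point is verifying that $c + c' \le n - \Omega(n)$ under the stated hypotheses so that Lemma~\ref{lem:Nk-bound-large-k} applies — this needs the small separate argument sketched above ruling out the degenerate cases where one permutation is nearly all fixed points or all $2$-cycles.
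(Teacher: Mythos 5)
Your proposal follows exactly the paper's route: Theorem~\ref{thm:main-ENk-bound} is obtained there precisely by summing Lemma~\ref{lem:Nk-bound-small-k} over $k \leq \delta n$ and Lemma~\ref{lem:Nk-bound-large-k} over $\delta n < k \leq n/2$, and your bookkeeping (fixing $\eps$ before $\delta$, isolating $k=1$ and splitting at $k = n^{1/2}$ to upgrade $O(1)$ to $o(1)$ in part (1)) is sound. The one step that does not hold as written is your verification of $c + c' \leq n - \Omega(n)$, which is needed to invoke Lemma~\ref{lem:Nk-bound-large-k}: in part (2), the crude observation that each permutation has at most $n/3$ cycles of length $\geq 3$ only yields $c + c' \leq (c_1 + c'_1) + (c_2 + c'_2) + \tfrac{2n}{3}$, which can exceed $n$ under the stated hypotheses. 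What you need is the refined count $c \leq \tfrac{n}{3} + \tfrac{2}{3} c_1 + \tfrac{1}{3} c_2$ (the cycles of length $\geq 3$ occupy only the $n - c_1 - 2c_2$ points not covered by fixed points and $2$-cycles), which gives
\[
  c + c' \leq \tfrac{2n}{3} + \tfrac{2}{3}(c_1 + c'_1) + \tfrac{1}{3}(c_2 + c'_2) \leq \tfrac{2n}{3} + O(n^{2/3}) + \tfrac{1}{3}(n - \Omega(n)) = n - \Omega(n).
\]
In part (1) the same inequality applies once you observe that $c_2^{1/2}{c'_2}^{1/2} \leq (c_1 + c_2^{1/2})(c'_1 + {c'_2}^{1/2}) = o(n)$ forces $c_2 + c'_2 \leq n - \Omega(n)$ (otherwise, since $c_2, c'_2 \leq n/2$, both would be $n/2 - o(n)$ and the product would be $\asymp n^2$); this replaces your ``one checks this case directly.'' With that one-line repair your argument is complete and coincides with the paper's.
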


\subsection{A Poisson approximation} \label{subsec:possion}

In the previous subsection we bounded the expectation $\E N$ of $N$. In this subsection we apply the method of moments to show that, under similar hypotheses, if $\E N$ is bounded then in fact
\[
  \P(N = 0) \approx e^{-\E N}.
\]
The method of moments depends on being able to prove moment estimates of the form
\[
  \E N(N-1) \cdots (N-m+1) \approx (\E N)^m.
\]
We therefore now turn our attention to the estimation of the moments and mixed moments of $(N_k)_{k\geq 1}$.

Recall from \eqref{eqn:ENk-formula} that
\[
  \E N_k = \sum_{\eqref{eqn:d-partition}, \eqref{eqn:d'-partition}} \frac{ \prod_{j=1}^k \binom{c_j}{d_j} \binom{c'_j}{d'_j}}{\binom{n}{k}} p(d_1, \dots, d_k; d'_1, \dots, d'_k).
\]
We first ask to what extent we have, for $k = O(1)$,
\[
  \E N_k \approx \sum_{\eqref{eqn:d-partition}, \eqref{eqn:d'-partition}} \frac{ \prod_{j=1}^k \frac{c_j^{d_j}}{d_j!} \frac{{c'_j}^{d'_j}}{d'_j!}}{\frac{n^k}{k!}} p(d_1, \dots, d_k; d'_1, \dots, d'_k).
\]
We have
\[
  \binom{c}{d} = (1 + O_d(1/c)) \frac{c^d}{d!},
\]
so the approximation is appropriate for the factors in which $c_j, c'_j$ are large, as well as the ones in which $d_j \leq 1$. Our contention is that the other terms do not contribute significantly, so overall the approximation is fine.

\begin{lemma}
We have
\[
  f_k \ll_k (c_1 + c^{1/2})^k.
\]
Moreover, for any particular index $j$ and any $t\geq 0$ we have
\[
  \sum_{\eqref{eqn:d-partition}, d_j \geq t} \binom{c_1}{d_1} \cdots \binom{c_k}{d_k} \ll_k \pfrac{c_j}{(c_1 + c^{1/2})^j}^t (c_1 + c^{1/2})^k.
\]
\end{lemma}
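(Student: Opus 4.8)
The plan is to prove both bounds by the same device: estimate the generating function $f_k x^k = \sum_{\eqref{eqn:d-partition}} \prod_j \binom{c_j}{d_j} x^{jd_j}$ and choose $x$ wisely. For the first inequality, recall from Lemma~\ref{lem:fkx-bound} that
\[
  f_k \leq x^{-k} (1+x)^{c_1} (1+x^2)^{c_2} \cdots (1+x^k)^{c_k}.
\]
Taking $x = c_1^{-1} + c^{-1/2}$ (or more simply $x \asymp 1/(c_1 + c^{1/2})$), each factor $(1+x^j)^{c_j} \leq e^{x^j c_j}$, and the key observation is that $x c_1 \leq 1$ and $x^2 c \leq 1$, while for $j \geq 3$ one has $x^j c_j \leq x^j c \leq (x^2 c)^{j/2} \leq 1$ (using $x \leq 1$, which holds since $c_1 + c^{1/2} \geq 1$ for $c \geq 1$). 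Hence $\prod_j (1+x^j)^{c_j} \leq e^{O(k)}$ once we note that only $j \leq k$ contribute and $\sum_{j=3}^k x^j c_j$ is $O(1)$ — actually it suffices that $\sum_{j\geq 1} x^j c_j = O(1)$, which follows from the three bounds above summed geometrically. Therefore $f_k \leq x^{-k} e^{O(k)} \ll_k (c_1 + c^{1/2})^k$.

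For the second, ``tail'' inequality, I would introduce a bias toward large $d_j$: write
\[
  \Bigl(\sum_{\eqref{eqn:d-partition},\, d_j \geq t} \prod_i \binom{c_i}{d_i}\Bigr) x^k
  \leq y^{-t} \sum_{d_1,\dots,d_k \geq 0} \prod_i \binom{c_i}{d_i} x^{i d_i} y^{[i=j] d_i}
  = y^{-t} (1+x^j y)^{c_j} \prod_{i \neq j} (1+x^i)^{c_i}
\]
for any $y \geq 1$, where the extra variable $y$ penalizes the constraint $d_j \geq t$ by a factor $y^{-t}$ (since $y^{d_j} \geq y^t$ on that range). Now take the same $x \asymp 1/(c_1 + c^{1/2})$ as before and choose $y = 1/(x^j c_j) \cdot (\text{const})$ — more precisely $y \asymp (c_1 + c^{1/2})^j / c_j$, which is $\geq 1$ because $c_j \leq (c^{1/2})^2 \cdot c^{(j-2)/2}\cdots$; one checks $x^j c_j \leq (x^2 c)^{j/2} \leq 1 \leq$ whatever is needed, so $y \geq 1$ as required. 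With this choice $(1+x^j y)^{c_j} = (1 + O(1))^{c_j}$ is not quite bounded, so instead one should pick $y$ only large enough that $x^j y c_j = O(k)$, say $y = k/(x^j c_j)$ when that exceeds $1$; then $(1+x^j y)^{c_j} \leq e^{x^j y c_j} = e^{O(k)}$, the remaining factors contribute $e^{O(k)}$ as in the first part, and
\[
  \sum_{\eqref{eqn:d-partition},\, d_j \geq t} \prod_i \binom{c_i}{d_i}
  \leq x^{-k} y^{-t} e^{O(k)}
  \ll_k (c_1 + c^{1/2})^k \Bigl(\frac{x^j c_j}{k}\Bigr)^t
  = (c_1 + c^{1/2})^k \Bigl(\frac{c_j}{k\,(c_1 + c^{1/2})^j}\Bigr)^t.
\]
This matches the claim up to the harmless factor $k^{-t}$, which only strengthens it (the stated bound omits the $k^{-t}$, so it follows a fortiori); if one wants the bound exactly as written, simply drop the $k$ in the choice of $y$ and absorb the resulting $e^{O(k)}$ slack into the $\ll_k$ — the two regimes $x^j c_j \lessgtr 1$ are handled by taking $y = \max(1,\, 1/(x^j c_j))$.

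The main obstacle is bookkeeping around the constraint $y \geq 1$ and making sure the product $\prod_{i\neq j}(1+x^i)^{c_i}$ is still $e^{O(k)}$ after we've perturbed the $j$-th factor: one must verify $\sum_{i} x^i c_i = O(1)$ survives, which it does since removing one term only helps, and that $j \leq k$ throughout so there are at most $k$ factors and $x^i \leq x$ for all of them. A minor subtlety is the edge case $c = 0$ or $c_1 + c^{1/2} < 1$, excluded since we always have at least one cycle when $k \geq 1$; and the implied constants genuinely depend on $k$ through the number of partitions, hence the $\ll_k$. Everything else is the routine optimization already rehearsed in Lemma~\ref{fk-bound}.
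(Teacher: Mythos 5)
Your proof is correct, but it takes a genuinely different route from the paper's. The paper exploits the standing hypothesis $k = O(1)$ directly: since the number of partitions \eqref{eqn:d-partition} is $O_k(1)$, one has $f_k \asymp_k \max_{\eqref{eqn:d-partition}} c_1^{d_1}\cdots c_k^{d_k}$, and this maximum (a linear program in the $d_i$) is attained by concentrating all weight on one part, giving $\max_i c_i^{k/i} \leq \max\{c_1^k, c^{k/2}\} \asymp_k (c_1+c^{1/2})^k$; the tail bound is then immediate by extracting the factor $c_j^t$ and applying the same estimate to partitions of $k-tj$. You instead reuse the generating-function bound of Lemma~\ref{lem:fkx-bound} with $x \asymp 1/(c_1+c^{1/2})$ and, for the tail, tilt by a second variable $y \geq 1$, a two-parameter Chernoff-type argument. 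Both are valid; your version carries explicit $e^{O(k)}$ losses and so would survive without the $k=O(1)$ restriction, while the paper's maximum-term reduction makes the tail estimate a one-liner. A few small repairs to your write-up: with $x \leq 1/(c_1+c^{1/2})$ one has $x^j c_j \leq \max\{x c_1,\, x^2 c\} \leq 1$ for every $j$ (and in fact $\sum_j x^j c_j \leq x c_1 + x^2 c \leq 2$), so the regime $x^j c_j > 1$ never occurs and the $\max(1,\cdot)$ in the choice of $y$ is moot; moreover with $y \asymp (c_1+c^{1/2})^j/c_j$ one gets $x^j y \asymp 1/c_j$, so $(1+x^j y)^{c_j} \leq e^{O(1)}$ is in fact bounded and the detour through $y = k/(x^j c_j)$ is unnecessary (though harmless). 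One should also note the trivial case $c_j = 0$, where the tail sum vanishes for $t \geq 1$, and observe that any stray constants of the form $C^t$ are absorbed into $\ll_k$ because the sum is empty unless $t \leq k/j \leq k$.
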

\begin{proof}
We revisit the calculation of the previous section, now armed with the restrictive hypothesis $k = O(1)$. From \eqref{fk},
\[
  f_k \asymp_k \max_\eqref{eqn:d-partition} c_1^{d_1} \cdots c_k^{d_k}.
\]
The maximum is not difficult to analyze: in fact the maximum over real $d_i$ is precisely
\[
  \max \{ c_1^k, c_2^{k/2}, \dots, c_k^{k/k}\}.
\]
We bound this crudely by
\[
  \max\{c_1^k, c^{k/2}\} \asymp_k (c_1 + c^{1/2})^k.
\]

Now consider the part of the sum in which $d_j \geq t$. By the same token we have
\begin{align*}
  \max_{\eqref{eqn:d-partition}, d_j \geq t} c_1^{d_1} \cdots c_k^{d_k}
  &= c_j^t \max_{d_1 1 + \cdots + d_k k = k - tj} c_1^{d_1} \cdots c_k^{d_k} \\
  &\ll_k c_j^t (c_1 + c^{1/2})^{k-tj}.
\end{align*}
This proves the lemma.
\end{proof}

\begin{lemma}
\begin{align*}
  \E N_k
  &= \sum_{\eqref{eqn:d-partition}, \eqref{eqn:d'-partition}} \frac{ \prod_{j=1}^k \frac{c_j^{d_j}}{d_j!} \frac{{c'_j}^{d'_j}}{d'_j!}}{\frac{n^k}{k!}} p(d_1, \dots, d_k; d'_1, \dots, d'_k) \\
  &\qquad + O_k\left( \frac1{\min\{c_1 + c^{1/2}, c'_1 + {c'}^{1/2}\}^{2/3}}
    \frac{((c_1 + c^{1/2})(c'_1 + {c'}^{1/2}))^k}{n^k}
  \right).
\end{align*}
\end{lemma}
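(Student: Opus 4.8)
The plan is to split the expansion \eqref{eqn:ENk-formula} of $\E N_k$ according to whether, for every cycle length $j$ that is used (meaning $d_j \geq 2$, respectively $d'_j \geq 2$), there are many $j$-cycles available. On this ``main'' region the approximations $\binom{c}{d} = (1 + O_d(1/c))\,c^d/d!$ and $\binom{n}{k} = (1 + O_k(1/n))\,n^k/k!$ are good in a relative sense, whereas on the complementary ``bad'' region both $\E N_k$ and the main term $M_k$ appearing in the statement (equivalently, \eqref{eqn:ENk-formula} with $\binom{c_j}{d_j}$ and $\binom{n}{k}$ replaced by $c_j^{d_j}/d_j!$ and $n^k/k!$) are negligible. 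Write $C = c_1 + c^{1/2}$ and $C' = c'_1 + {c'}^{1/2}$, fix a threshold $T$ to be optimised, and call a pair of partitions \emph{nice} if $c_j \geq T$ whenever $d_j \geq 2$ and $c'_j \geq T$ whenever $d'_j \geq 2$, and \emph{bad} otherwise. We may assume $\min\{C, C'\}$ is larger than any given constant depending on $k$: otherwise $\min\{C,C'\}^{-2/3} = \Omega_k(1)$, while $\E N_k \leq f_k f'_k/\binom{n}{k} \ll_k (CC')^k/n^k$ (by the previous lemma together with $\binom{n}{k} \asymp_k n^k/k!$) and likewise $M_k \ll_k (CC')^k/n^k$, so the claim is trivial. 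In particular $T$ may be taken large in terms of $k$, and $T \leq (2n)^{2/3} = o(n)$.

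First I would dispose of the nice region. A nice pair contributes $O_k(1)$ factors $\binom{c_j}{d_j}$ with $d_j \geq 2$, each equal to $(1 + O_k(1/T))\,c_j^{d_j}/d_j!$; the factors with $d_j \leq 1$ equal $c_j^{d_j}/d_j!$ exactly; and $\binom{n}{k} = (1 + O_k(1/T))\,n^k/k!$ since $n \gg T$. Hence the ratio of a nice summand of $\E N_k$ to the corresponding summand of $M_k$ is $1 + O_k(1/T)$, and since $0 \leq p \leq 1$ the nice part of $|\E N_k - M_k|$ is $\ll_k T^{-1} M_k \ll_k T^{-1}(CC')^k/n^k$, using the crude bound $\sum_{(d)} \prod_j c_j^{d_j}/d_j! \ll_k C^k$ (each summand being at most $\max\{c_1, c^{1/2}\}^k \leq C^k$, and there being $O_k(1)$ of them).

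Next the bad region, handled by a union bound over an offending index $j$. Suppose $j$ offends on the $\pi$ side, so $c_j < T$. The tail bound of the previous lemma at $t = 2$ gives $\sum_{(d):\, d_j \geq 2} \prod_i \binom{c_i}{d_i} \ll_k (c_j/C^j)^2 C^k < T^2 C^{k-2}$, and the same bound holds with $c_i^{d_i}/d_i!$ in place of $\binom{c_i}{d_i}$, that sum being at most $c_j^2 \max \prod_i c_i^{d_i} \ll_k c_j^2 C^{k-2j}$ with the maximum over partitions of $k - 2j$. Multiplying by the unconstrained sum over $(d')$ (which is $f'_k \ll_k C'^k$, respectively $\sum_{(d')}\prod_i {c'_i}^{d'_i}/d'_i! \ll_k C'^k$), dividing by $\binom{n}{k} \asymp_k n^k/k!$, summing over the $O_k(1)$ admissible $j$, and treating offending indices on the $\pi'$ side symmetrically, we obtain that the bad parts of $\E N_k$ and of $M_k$, hence of their difference, are all $\ll_k T^2(C^{-2} + {C'}^{-2})(CC')^k/n^k$. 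Combining the two regions gives $|\E N_k - M_k| \ll_k (T^{-1} + T^2(C^{-2} + {C'}^{-2}))(CC')^k/n^k$, and the choice $T = \min\{C, C'\}^{2/3}$ balances the two terms, since then $T^{-1} = \min\{C,C'\}^{-2/3}$ and $T^2(C^{-2} + {C'}^{-2}) \leq 2\min\{C,C'\}^{-2/3}$; this is the asserted estimate.

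The only genuinely load-bearing step is the bad-region estimate, and within it the fact that the previous lemma's tail bound supplies a \emph{quadratic} gain $(c_j/C^j)^2$ at $t = 2$ rather than merely a linear one — a linear gain would optimise to $T = \min\{C,C'\}^{1/2}$ and the weaker exponent $1/2$. Everything else (the two elementary binomial approximations, the bound $\sum_{(d)}\prod_j c_j^{d_j}/d_j! \ll_k C^k$, and the bookkeeping of the $O_k(1)$ factors) is routine. Incidentally, one can do slightly better by telescoping $\prod_j\binom{c_j}{d_j}\binom{c'_j}{d'_j} - \prod_j\frac{c_j^{d_j}}{d_j!}\frac{{c'_j}^{d'_j}}{d'_j!}$ one factor at a time and using the pointwise bound $c_j^{d_j}/d_j! - \binom{c_j}{d_j} = O_k(c_j^{d_j-1})$ for $d_j \geq 2$, which yields the sharper error $O_k(\min\{C,C'\}^{-1}(CC')^k/n^k)$; but the exponent $2/3$ is more than enough for the Poisson approximation in the next subsection.
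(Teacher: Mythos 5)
Your argument is correct and follows essentially the same route as the paper: split the sum according to whether every index with $d_j\geq 2$ (resp.\ $d'_j\geq 2$) has many $j$-cycles, bound the bad part via the $t=2$ tail estimate of the previous lemma, apply the binomial approximations on the good part, and optimize the threshold to get the exponent $2/3$. The only (cosmetic) difference is that you use a single absolute cutoff $T=\min\{c_1+c^{1/2},c'_1+{c'}^{1/2}\}^{2/3}$ where the paper uses the $j$-dependent cutoffs $c_j\leq\delta(c_1+c^{1/2})^j$ with $\delta^3=\min\{c_1+c^{1/2},c'_1+{c'}^{1/2}\}^{-1}$; both choices balance the same two error terms.
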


\begin{proof}
Let $\delta$ be a parameter, and split the sum according to whether there is any $j$ such that $d_j \geq 2$ and $c_j \leq \delta (c_1 + c^{1/2})^j$, or $d'_j\geq 2$ and $c'_j \leq \delta (c'_1 + {c'}^{1/2})^j$. By the previous lemma, the part of the sum where there is some such $j$ is bounded by
\[
  O_k(\delta^2) (c_1 + c^{1/2})^k (c'_1 + {c'}^{1/2})^k.
\]
On the other hand, if $c_j \geq \delta (c_1 + c^{1/2})^j$ whenever $d_j \geq 2$ then we have
\[
  \binom{c_1}{d_1} \cdots \binom{c_k}{d_k}
  = \frac{c_1^{d_1}}{d_1!} \cdots \frac{c_k^{d_k}}{d_k!} \left(1 + O_k\pfrac{1}{\delta (c_1 + c^{1/2})} \right),
\]
and similarly for the primed variables. Thus the error in our approximation is bounded by
\[
  O_k\left(\delta^2 + \frac1{\delta (c_1 + c^{1/2})} + \frac1{\delta (c'_1 + {c'}^{1/2})} \right)
  \frac{(c_1 + c^{1/2})^k (c'_1 + {c'}^{1/2})^k}{n^k}.
\]
To get the best bound we take
\[
  \delta^3 = \min\{c_1 + c^{1/2}, c'_1 + {c'}^{1/2}\}^{-1}.
\]
This gives the claimed bound.
\end{proof}

There is a similar estimate for the mixed moment
\[
  \E[(N_1)_{m_1} \cdots (N_k)_{m_k}].
\]
Here we write $(x)_{m_i} = x(x-1) \cdots (x-m_i+1)$ for the ``falling factorial''. Let
\begin{align*}
  m &= m_1 + \cdots + m_k, \\
  M &= m_1 1 + \cdots + m_k k.
\end{align*}
Define $k_1, \dots, k_m$ by
\[
  (k_1, \dots, k_m) = (1^{m_1}, \dots, k^{m_k})
\]
(i.e., $k_i = 1$ for $i\leq m_1$, etc). The product
\[
  (N_1)_{m_1} \cdots (N_k)_{m_k}
\]
counts $m$-tuples of distinct orbits of $\langle \pi, \pi'\rangle$, of which $m_1$ are $1$-sets, $m_2$ are $2$-sets, etc., all these sets being \emph{disjoint} (this is why we count orbits rather than fixed sets). To choose such sets we have to choose, for every such size $k_i$, $d_{i1}$ $1$-cycles, $d_{i2}$ $2$-cycles, $\dots$ of $\pi$, and likewise $d'_{i1}$ $1$-cycles, $d'_{i2}$ $2$-cycles, $\dots$ of $\pi'$. This reasoning leads to the following estimate.

\begin{lemma}
\begin{align*}
  &\E[(N_1)_{m_1} \cdots (N_k)_{m_k}] \\
  &\qquad = \sum \prod_{i=1}^m \frac{\prod_{j=1}^{k} \frac{c_j^{d_{ij}}}{d_{ij}!}\frac{{c'_j}^{d'_{ij}}}{d'_{ij}!}}{\frac{n^{k_i}}{k_i!}} p(d_{i1}, \dots, d_{ik} ; d'_{i1}, \dots, d'_{ik}) \\
  &\qquad\qquad + O_M\left(
  \frac1{\min\{c_1 + c^{1/2}, c'_1 + {c'}^{1/2}\}^{2/3}}
  \frac{((c_1 + c^{1/2})(c'_1 + {c'}^{1/2}))^M}{n^M}
  \right).
\end{align*}
The sum runs over all $(d_{ij})$, $(d'_{ij})$ such that
\[
    d_{i1} 1 + \cdots + d_{ik} k =
    d'_{i1} 1 + \cdots + d'_{ik} k = k_i \qquad (i \in \{1, \dots, m\}).
\]
\end{lemma}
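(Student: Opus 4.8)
The idea is to run exactly the same combinatorial bookkeeping that produced the single-moment estimate in the preceding lemma, but now applied to an $m$-tuple of pairwise-disjoint orbits rather than a single orbit. I would begin from the exact identity for $\E[(N_1)_{m_1}\cdots(N_k)_{m_k}]$ obtained by linearity of expectation: this falling-factorial product counts ordered $m$-tuples of \emph{distinct} orbits $(X_1,\dots,X_m)$ of $\langle\pi,\pi'\rangle$ with $|X_i| = k_i$, and since distinct orbits are automatically disjoint, the indicator of ``$X_1,\dots,X_m$ are all orbits'' depends only on how $\pi$ and $\pi'$ act on the disjoint union $X_1\sqcup\cdots\sqcup X_m$. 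To specify such a configuration one chooses, for each $i$ and each cycle length $j$, a sub-collection of $d_{ij}$ of the $j$-cycles of $\pi$ (and $d'_{ij}$ of the $j$-cycles of $\pi'$) to be swallowed by $X_i$, subject to $\sum_j j\,d_{ij} = \sum_j j\,d'_{ij} = k_i$; then one chooses a bijection of each chosen cycle-union onto a concrete subset of $\Omega$; then one multiplies by $p(d_{i1},\dots;d'_{i1},\dots)$, the probability the resulting pair of permutations on a $k_i$-set is transitive. Collecting the multinomial factors gives the exact expression
\[
  \sum \prod_{i=1}^m
  \frac{\prod_{j}\binom{c_j - \sum_{i'<i} d_{i'j}}{d_{ij}}\binom{c'_j - \sum_{i'<i} d'_{i'j}}{d'_{ij}}}{\binom{n - \sum_{i'<i} k_{i'}}{k_i}}
  \, p(d_{i1},\dots;d'_{i1},\dots),
\]
where the outer sum is over all $(d_{ij}),(d'_{ij})$ satisfying the stated linear constraints.

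**Passing to the Poissonized form.** Next I would replace every binomial coefficient $\binom{c_j - \ast}{d_{ij}}$ by $c_j^{d_{ij}}/d_{ij}!$ and every $\binom{n-\ast}{k_i}$ by $n^{k_i}/k_i!$, and argue that the total error is of the claimed order. There are two sources of error. The first is the ``shift'' $\sum_{i'<i}$ in the top of each binomial: since $\sum_i k_i = M = O(1)$, replacing $c_j - \ast$ by $c_j$ and $n-\ast$ by $n$ costs only a factor $1 + O_M(1/c_j)$ in the surviving terms (those where $c_j$ is not tiny) and is harmless. The second, and the one that actually produces the error term in the statement, is the discrepancy between $\binom{c_j}{d_{ij}}$ and $c_j^{d_{ij}}/d_{ij}!$ when $d_{ij}\geq 2$ and $c_j$ is small: this is precisely the situation controlled by the tail bound in the lemma two above, $\sum_{\eqref{eqn:d-partition},\,d_j\geq t}\prod_\ell\binom{c_\ell}{d_\ell} \ll_k (c_j/(c_1+c^{1/2})^j)^t (c_1+c^{1/2})^k$. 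Introducing the threshold parameter $\delta$ exactly as in the single-orbit proof — splitting the sum according to whether some $(i,j)$ has $d_{ij}\geq 2$ with $c_j \leq \delta(c_1+c^{1/2})^j$ (resp.\ the primed analogue) — the ``bad'' part of the sum is $O_M(\delta^2)\,((c_1+c^{1/2})(c'_1+{c'}^{1/2}))^M/n^M$ by the tail bound applied inside each factor, while on the ``good'' part each binomial is $(1+O_M(1/(\delta(c_1+c^{1/2}))))$ times its Poissonized value; optimizing over $\delta$ via $\delta^3 = \min\{c_1+c^{1/2},\,c'_1+{c'}^{1/2}\}^{-1}$ gives exactly the stated error. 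The only new wrinkle relative to the single-orbit case is that the splitting now ranges over the index set $\{1,\dots,m\}\times\{1,\dots,k\}$ rather than just $\{1,\dots,k\}$; since $m,k = O(1)$ this changes nothing except the implied constants, and the total ``mass'' available is $((c_1+c^{1/2})(c'_1+{c'}^{1/2}))^M/n^M$ because the linear constraints force $\sum_i k_i = M$ cycle-weight to be distributed across the $m$ orbits.

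**The main obstacle.** The genuinely delicate point — and the one I would write out most carefully — is verifying that the $f_k$-type bounds survive the passage from a single partition \eqref{eqn:d-partition} to the product over $i$ of $m$ partitions with prescribed sizes $k_1,\dots,k_m$. Concretely, one needs $\sum \prod_{i,j}\binom{c_j}{d_{ij}} \ll_M \prod_i (c_1+c^{1/2})^{k_i} = (c_1+c^{1/2})^M$ together with the matching tail estimate when some $d_{ij}\geq t$; this is not literally the earlier lemma but follows from it by a short induction on $m$, peeling off one orbit $X_i$ at a time (the remaining cycle budget only decreases, so the bound $(c_1+c^{1/2})^{k_i}$ for each factor still applies with the \emph{original} $c_j$'s as an upper bound). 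Once that bookkeeping lemma is in hand the rest is the verbatim $\delta$-optimization above. The combinatorial identity at the start is routine but must be stated precisely enough that the reader believes the Poissonized main term has \emph{exactly} the product-of-factors shape claimed, with the $p(\cdot;\cdot)$ factors attached orbit-by-orbit and no cross terms — this is where the disjointness of distinct orbits (hence independence of the "shapes'' of the action on different $X_i$, given which cycles go where) is doing the essential work.
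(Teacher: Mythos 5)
Your proposal is correct and follows essentially the same route as the paper: your exact identity with sequential binomial coefficients is algebraically identical to the paper's multinomial formula (the telescoping products $\prod_i\binom{c_j-\sum_{i'<i}d_{i'j}}{d_{ij}}$ and $\prod_i\binom{n-\sum_{i'<i}k_{i'}}{k_i}$ are exactly $\binom{c_j}{d_{1j},\dots,d_{mj}}$ and $\binom{n}{k_1,\dots,k_m}$), and the subsequent $\delta$-splitting and optimization is precisely how the paper reduces to the single-moment argument. Your extra remark that the $f_k$-type tail bound extends to products over the $m$ orbits (by peeling off one orbit at a time) is a point the paper leaves implicit in ``the rest of the proof is the same as in the previous lemma,'' so if anything your write-up is slightly more explicit.
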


\begin{proof}
We have\footnote{We are using a slightly nonstandard, but backwards-compatible, notation for multinomial coefficients: $\binom{n}{a, b} = \frac{n!}{a!b!(n-a-b)!}$, etc.}
\begin{align*}
  &\E[(N_1)_{m_1} \cdots (N_k)_{m_k}] \\
  &\qquad = \sum \frac{\prod_{j=1}^{k} \binom{c_j}{d_{1j}, \dots, d_{mj}} \binom{c'_j}{d'_{1j}, \dots, d'_{mj}}}
  {\binom{n}{k_1, \dots, k_m}}
  \prod_{i=1}^m p(d_{i1}, \dots, d_{ik} ; d'_{i1}, \dots, d'_{ik}).
\end{align*}
The rest of the proof is the same as in the previous lemma.
\end{proof}

\begin{theorem} \label{thm:poisson-approximation}
Let
\[
  B = \max \{(c_1 + c^{1/2}) (c'_1 + {c'}^{1/2}) / n, 1\}.
\]
Then
\[
  \E[(N_1)_{m_1} \cdots (N_k)_{m_k}] - (\E N_1)^{m_1} \cdots (\E N_k)^{m_k}
  \ll_M
  \frac{B^M}{\min\{c_1 + c^{1/2}, c'_1 + {c'}^{1/2}\}^{2/3}}.
\]
Thus also, for any $k,m$,
\[
\E[(N_{\leq k})_m] - (\E N_{\leq k})^m 
 \ll_{k,m}
  \frac{B^{km}}{\min\{c_1 + c^{1/2}, c'_1 + {c'}^{1/2}\}^{2/3}}
  ,
\]
where $N_{\leq k} = N_1 + \cdots + N_k$.
\end{theorem}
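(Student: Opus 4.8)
The theorem has two parts, and the second (the ``$N_{\leq k}$'' statement) follows from the first by a purely combinatorial expansion, so the real work is the first inequality. I would prove the first inequality by comparing the mixed-moment formula from the preceding lemma with the product $(\E N_1)^{m_1} \cdots (\E N_k)^{m_k}$ term by term. The preceding lemma already tells us that
\[
  \E[(N_1)_{m_1} \cdots (N_k)_{m_k}]
  = S + O_M\!\left( \frac{B^M}{\min\{c_1 + c^{1/2}, c'_1 + {c'}^{1/2}\}^{2/3}} \right),
\]
where $S$ is the ``idealized'' sum over tuples $(d_{ij}), (d'_{ij})$ with $\sum_j d_{ij} j = \sum_j d'_{ij} j = k_i$ of $\prod_i \big( \prod_j \frac{c_j^{d_{ij}}}{d_{ij}!} \frac{{c'_j}^{d'_{ij}}}{d'_{ij}!} \big/ \frac{n^{k_i}}{k_i!}\big) p(\cdots)$; here I am using $B^M \geq ((c_1+c^{1/2})(c'_1+{c'}^{1/2}))^M/n^M$ to absorb the error term of that lemma into the form claimed. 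Applying the same lemma with $m=1$ gives $\E N_{k_i} = S_i + O(\cdots)$ where $S_i$ is the analogous single-orbit idealized sum, and $S_i \ll_{k_i} B$ since each term of $S_i$ is at most $((c_1+c^{1/2})(c'_1+{c'}^{1/2}))^{k_i}/n^{k_i} \cdot O_{k_i}(1) \leq B^{k_i} O_{k_i}(1) \leq B\cdot O_{k_i}(1)$ when $B \geq 1$, wait — more carefully $B^{k_i} \le B^{\,?}$ needs $B\ge 1$ and one should keep $B^{k_i}$, and $\prod_i B^{k_i} = B^M$, so $\prod_i S_i \ll_M B^M$ and each $O(\cdots)$ error is $\ll B^M/\min\{\cdots\}^{2/3}$ as well; hence $(\E N_1)^{m_1}\cdots(\E N_k)^{m_k} = \prod_i S_i + O_M(B^M/\min\{\cdots\}^{2/3})$. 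So it suffices to show $S = \prod_{i=1}^m S_i$ exactly.

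**The key identity.** The point is that $S$ \emph{factorizes}: the idealized sum over all $m$ tuples is literally the product of the single-orbit idealized sums. This is because in the idealized expression the only coupling between the $m$ indices $i$ in the original exact formula came through the multinomial coefficients $\binom{c_j}{d_{1j},\dots,d_{mj}}$ and $\binom{n}{k_1,\dots,k_m}$, and the whole purpose of replacing $\binom{c_j}{d_{1j},\dots,d_{mj}}$ by $\prod_i \frac{c_j^{d_{ij}}}{d_{ij}!}$ (and $\binom{n}{k_1,\dots,k_m}$ by $\prod_i \frac{n^{k_i}}{k_i!}$) is that these approximations are multiplicative: $\frac{c_j^{\,d_{1j}+\cdots+d_{mj}}}{(d_{1j})!\cdots(d_{mj})!} = \prod_i \frac{c_j^{d_{ij}}}{d_{ij}!}$ and $\frac{n^{\,k_1+\cdots+k_m}}{k_1!\cdots k_m!} = \prod_i \frac{n^{k_i}}{k_i!}$. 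Since the constraint $\sum_j d_{ij}j = \sum_j d'_{ij}j = k_i$ is separately imposed on each $i$, and the $p(\cdots)$ factor is already a product over $i$, Fubini over the disjoint index blocks gives $S = \prod_{i=1}^m S_i$ with no error. I would write this out as one displayed chain of equalities.

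**Assembling and the routine finish.** Combining: $\E[(N_1)_{m_1}\cdots(N_k)_{m_k}] = S + O_M(\cdots) = \prod_i S_i + O_M(\cdots) = (\E N_1)^{m_1}\cdots(\E N_k)^{m_k} + O_M(B^M/\min\{c_1+c^{1/2},c'_1+{c'}^{1/2}\}^{2/3})$, which is the first claim. For the second claim, expand $(N_{\leq k})_m = (N_1 + \cdots + N_k)(N_1+\cdots+N_k - 1)\cdots(N_1+\cdots+N_k-m+1)$; a standard identity rewrites any falling factorial of a sum of nonnegative integer random variables as a finite $\N$-linear combination, with bounded (depending only on $k,m$) coefficients, of products $(N_1)_{m_1}\cdots(N_k)_{m_k}$ with $m_1+\cdots+m_k = m$, and similarly $(N_{\leq k})^m$ is the same combination applied to the products $(\E N_1)^{m_1}\cdots(\E N_k)^{m_k}$ — wait, this needs care: $(\E N_{\leq k})^m = (\E N_1 + \cdots + \E N_k)^m$ expands by the multinomial theorem into $\sum \binom{m}{m_1,\dots,m_k} \prod_i (\E N_i)^{m_i}$, and the mismatch between the multinomial expansion of the $m$-th power and the combinatorial expansion of the $m$-th falling factorial is itself a bounded linear combination of \emph{lower-degree} terms, each again controlled by the first part applied with $M \leq km$; so applying the first bound to each of the $O_{k,m}(1)$ resulting terms, with $B^{M} \leq B^{km}$ since $B\geq 1$, gives $\E[(N_{\leq k})_m] - (\E N_{\leq k})^m \ll_{k,m} B^{km}/\min\{c_1+c^{1/2},c'_1+{c'}^{1/2}\}^{2/3}$, as claimed.

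**Main obstacle.** The only genuine subtlety is bookkeeping: making sure the falling-factorial-of-a-sum expansion and the reconciliation with the multinomial power expansion are stated cleanly enough that every leftover term is covered by the first part of the theorem with an exponent bounded by $km$, and that the constants absorbed into $O_{k,m}(1)$ (number of terms, number of tuples, values of $p \leq 1$) are genuinely independent of $n$. Everything else — the factorization $S = \prod_i S_i$, and the passage from $\E N_k$ to $S_i$ via the preceding lemma — is immediate from what has already been established.
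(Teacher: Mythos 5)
Your proposal is correct and takes essentially the same route as the paper: the first bound is the two preceding lemmas plus the factorization of the idealized sums (which the paper leaves as ``immediate'' but you rightly spell out, together with the bound $S_i \ll_{k_i} B^{k_i}$ and the error propagation), and the second bound comes from expanding $(N_{\leq k})_m$ and $(\E N_{\leq k})^m$ and applying the first part term by term with $m \leq M \leq km$ and $B \geq 1$. Your hedged worry about a ``mismatch'' in that last step is unfounded: the exact identity $(N_1+\cdots+N_k)_m = \sum_{m_1+\cdots+m_k=m}\binom{m}{m_1,\dots,m_k}(N_1)_{m_1}\cdots(N_k)_{m_k}$ carries precisely the multinomial coefficients, so the terms pair off with the multinomial expansion of $(\E N_{\leq k})^m$ with no lower-degree corrections, which only simplifies your argument.
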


\begin{proof}
The first bound follows immediately from the previous two lemmas. For the second bound, we use
\[
  (N_{\leq k})_m = m! \sum_{m_1 + \cdots + m_k = m} \frac{(N_1)_{m_1}}{m_1!} \cdots \frac{(N_k)_{m_k}}{m_k!},
\]
and similarly
\[
  (\E N_{\leq k})^m = m! \sum_{m_1 + \cdots + m_k = m} \frac{(\E N_1)^{m_1}}{m_1!} \cdots \frac{(\E N_k)^{m_k}}{m_k!}.
\]
Note that the value of
$
  M = 1 m_1 + \cdots + k m_k
$
over
$
  m_1 + \cdots + m_k = m
$
ranges between $m$ and $km$. This proves the theorem.
\end{proof}

\begin{theorem}
Assume $c_1 + c'_1 = o(n^{2/3})$, $(c_1 + c_2^{1/2}) (c'_1 + {c_2'}^{1/2}) = O(n)$, and $c_2 + c'_2 = n - \Omega(n)$. Then 
\[
  \P(N = 0) = e^{-\E N} + o(1).
\]
\end{theorem}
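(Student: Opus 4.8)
The plan is to bootstrap the first-moment bound $\lambda := \E N = O(1)$ (Theorem~\ref{thm:main-ENk-bound}(2)) to the full Poisson approximation by a truncation-plus-Bonferroni argument fed by the moment estimates of Theorem~\ref{thm:poisson-approximation}. Since the assertion is stable under passing to subsequences, and is trivial whenever $\lambda = o(1)$ (then $\P(N=0) = 1-o(1) = e^{-\lambda}+o(1)$), we may assume $\lambda \to \mu$ for some constant $\mu > 0$. Write $B = \max\{(c_1+c^{1/2})(c'_1+{c'}^{1/2})/n, 1\}$ and $D = \min\{c_1+c^{1/2}, c'_1+{c'}^{1/2}\}$, as in Theorem~\ref{thm:poisson-approximation}.

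First I would truncate. For a parameter $K$, set $N_{\leq K} = \sum_{k\leq K} N_k$ and $N_{>K} = N - N_{\leq K}$. The bounds of Lemmas~\ref{lem:Nk-bound-small-k} and~\ref{lem:Nk-bound-large-k} give, under the present hypotheses, $\E N_k \ll r^k$ for a fixed $r \in (0,1)$ once $k$ exceeds an absolute constant (for $k = o(n)$ this is Lemma~\ref{lem:Nk-bound-small-k}; for larger $k$ one uses Lemma~\ref{lem:Nk-bound-large-k}, whose hypothesis $c+c' \leq n-\Omega(n)$ follows from $c_1+c'_1 = o(n)$ and $c_2+c'_2 = n-\Omega(n)$). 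Hence $\limsup_n \E N_{>K} \to 0$ as $K \to \infty$. Since $\P(N_{\leq K}=0) - \E N_{>K} \leq \P(N=0) \leq \P(N_{\leq K}=0)$ and $e^{-\E N_{\leq K}} = e^{-\lambda} + O(\E N_{>K})$, it suffices to prove $\P(N_{\leq K}=0) = e^{-\E N_{\leq K}} + o_K(1)$ for each fixed $K$. For this, fix also $M$, apply Bonferroni's inequalities, and insert the second bound of Theorem~\ref{thm:poisson-approximation} (with $\E N_{\leq K} \leq \lambda \leq C$) to obtain
\[
  \P(N_{\leq K}=0) = e^{-\E N_{\leq K}} + O\!\left(\frac{C^M}{M!}\right) + O_{K,M}\!\left(\frac{B^{KM}}{D^{2/3}}\right).
\]
Taking $M$ large makes the middle term as small as we like, so everything comes down to controlling $B^{KM}/D^{2/3}$ as $n \to \infty$ for fixed $K,M$.

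This error term is harmless in two regimes. If $(c_1+c^{1/2})(c'_1+{c'}^{1/2}) = O(n)$ then $B = O(1)$, and if moreover $D \to \infty$ then $B^{KM}/D^{2/3} = O(D^{-2/3}) = o(1)$ and we are done. If instead $D = O(1)$, then one of $\pi,\pi'$ — say $\pi$ — has only $O(1)$ cycles, so every orbit of $\langle\pi,\pi'\rangle$ is a union of the $O(1)$ orbits of $\pi$; thus $N$ is a sum of $O(1)$ indicators, and bounding the probability of each by a direct first-moment estimate (via $\fix(\pi)$ being a random set, $c_1 = o(n^{2/3})$, and Lemmas~\ref{fk-bound} and~\ref{lem:Nk-bound-large-k}) shows each is $o(1)$; hence $\lambda = o(1)$ and the statement is trivial. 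The genuinely delicate regime is $(c_1+c^{1/2})(c'_1+{c'}^{1/2}) = \omega(n)$, where $B \to \infty$ (and $D \to \infty$) and Theorem~\ref{thm:poisson-approximation} is too lossy for the high moments Bonferroni needs. Here one checks that, after possibly exchanging $\pi$ and $\pi'$, the growth of $B$ forces $c_1 \gg n^{1/2}$ or $c_2 \gg n$, and hypothesis~(b) then pins $c'_1 + {c'_2}^{1/2} = o(n^{1/2})$. Consequently the only small orbits contributing to $\lambda$ are of bounded combinatorial type — short cycles of $\pi'$ lying inside $\fix(\pi)$ (respectively inside the support of the $2$-cycles of $\pi$), together with any cycles of $\pi$ of length below $n/2$ — and for these structured counts the exact factorial moments can be written out explicitly as ratios of falling factorials and shown to equal $(\text{mean})^{\text{order}}(1+o(1))$ by elementary estimates, much sharper than Theorem~\ref{thm:poisson-approximation}. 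This makes the contributing $N_k$ jointly asymptotically Poisson with means $\E N_k$, whence $\P(N=0) = \prod_k e^{-\E N_k} + o(1) = e^{-\lambda} + o(1)$.

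I expect this last regime to be the main obstacle: the general-purpose moment bound of Theorem~\ref{thm:poisson-approximation} degrades once $(c_1+c^{1/2})(c'_1+{c'}^{1/2})/n$ is unbounded, so one must first localise the problem to a short list of dominant orbit shapes and then run a bespoke, tighter Poisson approximation for them. The truncation, the Bonferroni bookkeeping, and the bounded-$B$ case are all routine.
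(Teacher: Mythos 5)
Your proposal has the same skeleton as the paper's proof: truncate at $k = O_\eps(1)$ using Lemmas~\ref{lem:Nk-bound-small-k} and~\ref{lem:Nk-bound-large-k}, run Bonferroni, and control the factorial moments via Theorem~\ref{thm:poisson-approximation}; your cases where $\E N = o(1)$ or $D = O(1)$ are subsumed in the paper by its closing dichotomy (if $D = O(1)$ then $(c_1+c^{1/2})(c'_1+{c'}^{1/2}) = o(n)$ because $c_1 + c'_1 = o(n^{2/3})$, so Theorem~\ref{thm:main-ENk-bound} makes the statement trivial; otherwise $D \gg n^{1/3}$). The genuine divergence is your fourth regime. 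There the paper simply writes ``$B = O(1)$ by hypothesis'', and your instinct that this needs justification is sound: $B$ is built from the total cycle numbers $c, c'$, whereas the hypothesis only controls $(c_1 + c_2^{1/2})(c'_1 + {c'_2}^{1/2})$; for instance $c_1 \asymp n^{0.55}$, $c_2 = c'_1 = 0$, $c'_2 \asymp n^{0.9}$, all remaining cycles of length $3$, satisfies the hypotheses while $B \to \infty$ and $\E N \asymp 1$. The lightest repair, however, is not a bespoke argument in that regime but a sharpening of the input: since $f_k \asymp_k \max_j c_j^{k/j} \ll_k (c_1 + c_2^{1/2} + n^{1/3})^k$, the error term in Theorem~\ref{thm:poisson-approximation} can be stated with $(c_1 + c_2^{1/2} + n^{1/3})(c'_1 + {c'_2}^{1/2} + n^{1/3})/n$ in place of $B$, and this quantity is $O(1)$ under the stated hypotheses (using $c_1 + c'_1 = o(n^{2/3})$), after which the original argument closes with no extra case split. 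Your alternative route, classifying the dominant orbits in the bad regime and redoing the Poisson computation for them directly in the style of Subsection~\ref{subsec:c1c2}, is viable, but as written it is a plan rather than a proof: the classification is asserted, not proved (in fact, after swapping so that $c_1 = \omega(n^{1/2})$, the second hypothesis forces $c'_1 + {c'_2}^{1/2} = o(n^{1/2})$, and the only counts with non-negligible mean are common fixed points and $2$-cycles of $\pi'$ inside $\fix(\pi)$, so your list involving $2$-cycles of $\pi$ and arbitrary short cycles of $\pi$ is more generous than needed, though harmless), and the joint factorial-moment estimates with their disjointness bookkeeping are not carried out. In short: essentially the paper's approach, with a well-spotted and legitimate concern at the ``$B = O(1)$'' step, but the patch you propose for it is left at sketch level where a one-line strengthening of the moment lemma suffices.
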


\begin{proof}
Write
$
  N = N_{\leq k} + N_{>k},
$
where
\begin{align*}
    N_{\leq k} &= \sum_{j = 1}^k N_j, \\
    N_{>k} &= \sum_{j > k} N_j.
\end{align*}
By Lemmas~\ref{lem:Nk-bound-small-k} and \ref{lem:Nk-bound-large-k} from the previous subsection, there is a $k = O_\eps(1)$ such that $\E N_{>k}$ is smaller than $\eps$, and
\[
  \P(N = 0) \leq \P(N_{\leq k} = 0) \leq \P(N = 0) + \P(N_{>k} > 0),
\]
so the main thing to understand is $\E N_{\leq k}$. From Bonferroni's inequalities we have, for any $M\geq 0$,
\[
  1_{N_{\leq k}=0} = \sum_{m=0}^{M-1} (-1)^m \binom{N_{\leq k}}{m} + O\left(\binom{N_{\leq k}}{M}\right).
\]
Therefore, from the previous theorem,
\begin{align*}
  \P(N_{\leq k} = 0)
  &= \sum_{m=0}^{M-1} (-1)^m \frac{ \E [(N_{\leq k})_{m}]}{m!} + O \pfrac{\E[ (N_{\leq k})_M]}{M!} \\
  &= \sum_{m=0}^{M-1} (-1)^m \frac{ (\E N_{\leq k})^m}{m!}
  + O\pfrac{(\E N_{\leq k})^M}{M!} \\
  &\qquad + O_{k, M} \left( 
  \frac{B^{kM}}{\min\{c_1 + c^{1/2}, c'_1 + {c'}^{1/2}\}^{2/3}}\right) \\
  &= e^{-\E N_{\leq k}} 
  + O\pfrac{(\E N_{\leq k})^M}{M!} \\
  &\qquad + O_{k, M} \left( 
  \frac{B^{kM}}{\min\{c_1 + c^{1/2}, c'_1 + {c'}^{1/2}\}^{2/3}}\right).
\end{align*}
Since $\E N_{\leq k} = O_\eps(1)$ (by Lemma~\ref{lem:Nk-bound-small-k} again), we can choose $M = O_\eps(1)$ so that the first error term here is smaller than $\eps$. Note also $B = O(1)$ by hypothesis. Putting all this together, we have
\[
  \P(N = 0) = e^{-\E N} + O(\eps) + O_{\eps}\left(\min\{c_1 + c^{1/2}, c'_1 + {c'}^{1/2}\}^{-2/3} \right).
\]

Finally, note that the hypothesis $c_1 + c'_1 = o(n^{2/3})$ ensures that either
\[
  (c_1 + c^{1/2}) (c'_1 + {c'}^{1/2}) = o(n),
\]
in which case we are already done by Theorem~\ref{thm:main-ENk-bound}, or
\[
  \min\{c_1 + c^{1/2}, c'_1 + {c'}^{1/2}\} \gg n^{1/3}.
\]
Thus
\[
  \P(N = 0) = e^{-\E N} + o(1),
\]
as claimed.
\end{proof}

\begin{remark}
An argument along the same line shows that $N$ is asymptotically distributed as $\Poisson(\E N)$, and indeed for any fixed $k$ the random variables $N_1, \dots, N_k$ are asymptotically distributed as independent Poisson random variables.
\end{remark}

\section{Transitive subgroups} \label{sec:transitive-subgroups}

In the previous section we estimated the probability that $G = \langle \pi, \pi'\rangle$ is transitive. In this section we show that $G$ is almost surely not a transitive subgroup smaller than $A_n$. The proof is easier (modulo known results about primitive groups), and the bounds rather stronger.

The following rather trivial lemma (which was already used implicitly in the previous section) seems worth isolating.

\begin{lemma}\label{fixed_point_lemma}
Suppose $G$ acts transitively on a set $X$, and suppose $\pi_1, \pi_2 \in G$ have $k_1$ and $k_2$ fixed points in $X$, respectively. Draw $\sigma \in G$ uniformly at random. Then
\[
  \P(\fix(\pi_1) \cap \fix(\pi_2^\sigma) \neq \emptyset) \leq k_1 k_2/|X|.
\]
\end{lemma}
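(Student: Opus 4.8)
The plan is to bound the probability by a union bound / first-moment computation over points of $X$. First I would write
\[
  \P(\fix(\pi_1) \cap \fix(\pi_2^\sigma) \neq \emptyset)
  \leq \E\bigl|\fix(\pi_1) \cap \fix(\pi_2^\sigma)\bigr|
  = \sum_{x \in \fix(\pi_1)} \P\bigl(x \in \fix(\pi_2^\sigma)\bigr),
\]
so that the whole thing reduces to estimating, for a fixed point $x$ of $\pi_1$, the probability that $x$ is also fixed by the random conjugate $\pi_2^\sigma$.

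Next I would use the identity recorded in the notation section, $\fix_X(\pi_2^\sigma) = \fix_X(\pi_2)^\sigma$, so that $x \in \fix(\pi_2^\sigma)$ if and only if $x^{\sigma^{-1}} \in \fix(\pi_2)$, i.e. $x \in \fix(\pi_2)^\sigma$. The key point is that, because $G$ is transitive on $X$ and $\sigma$ is uniform in $G$, for each fixed $x$ the image $x^{\sigma^{-1}}$ is uniformly distributed over $X$ (the stabilizer of $x$ has index $|X|$, and the cosets partition $G$ evenly). Hence $\P(x^{\sigma^{-1}} \in \fix(\pi_2)) = |\fix(\pi_2)|/|X| = k_2/|X|$, independently of which $x$ we chose. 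Summing over the $k_1$ choices of $x \in \fix(\pi_1)$ gives the bound $k_1 k_2 / |X|$.

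There is essentially no obstacle here: the only thing to be a little careful about is the transitivity-to-uniformity step, i.e. that uniform $\sigma \in G$ pushes a fixed point $x$ to a uniform point of $X$ — this is just the orbit–stabilizer theorem applied to the coset decomposition of $G$ by $\op{Stab}_G(x)$. Everything else is a one-line first-moment estimate, and the inequality (rather than equality) in the statement is exactly what the union bound $\P(\text{nonempty}) \leq \E|\cdot|$ provides.
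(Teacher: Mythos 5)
Your argument is correct and is essentially the paper's own proof: both bound $\P(\fix(\pi_1)\cap\fix(\pi_2^\sigma)\neq\emptyset)$ by the first moment $\E|\fix(\pi_1)\cap\fix(\pi_2)^\sigma|$ and compute it as $k_1k_2/|X|$ using transitivity. You merely spell out the orbit--stabilizer step (that $x^{\sigma^{-1}}$ is uniform on $X$) which the paper leaves implicit.
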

\begin{proof}
This is immediate from
\[
  \E|\fix(\pi_1) \cap \fix(\pi_2^\sigma)| = \E|\fix(\pi_1) \cap \fix(\pi_2)^\sigma| = k_1 k_2/|X|.\qedhere
\]
\end{proof}

The utility of the lemma is easy to explain. Most maximal subgroups of $S_n$ are given explicitly as the stabilizer of a point in some natural action. For each such subgroup $M$, we need to show that $\pi, \pi'$ are not simultaneously trapped in a conjugate of $M$, i.e., do not have a common fixed point in the given action. By the lemma it will suffice to bound the number of fixed points of $\pi$ and $\pi'$ (with more than a square-root saving).

\subsection{Imprimitive subgroups} \label{subsec:imprimitive-subgroups}

First consider imprimitive transitive subgroups. To show that $\pi, \pi'$ are not simultaneously trapped by any such subgroup, we need to show $\pi, \pi'$ do not simultaneously preserve a partition of $\Omega$ into $k$ blocks of size $m$ for some $m,k>1$ such that $n=mk$.

Let $X_k$ be the set of all $k$-(equi)partitions of $\Omega$, i.e., partitions
\begin{equation} \label{k-partition}
  \Omega = \Omega_1 \cup \cdots \cup \Omega_k
\end{equation}
such that $|\Omega_i| = n/k$ for each $i$. The order of the cells $\Omega_1, \dots, \Omega_k$ is not considered significant. We have
\[
  |X_k| = \frac{n!}{k! (n/k)!^k}.
\]
Note that $S_n$ acts transitively on $X_k$, and $\langle \pi, \pi'\rangle$ preserves a $k$-partition if and only if $\pi$ and $\pi'$ have a common fixed point in $X_k$. By Lemma~\ref{fixed_point_lemma} it suffices to bound $|\fix_{X_k}(\pi)|$.

\begin{remark}
The ``dual'' problem of estimating the number of $\pi$ fixing at least one $k$-partition was considered in Diaconis--Fulman--Guralnick~\cite[Sections~5~and~6]{diaconis--fulman--guralnick} and Eberhard--Ford--Koukoulopoulos~\cite[Theorem~1.2]{eberhard--ford--koukoulopoulos}.
\end{remark}

\begin{lemma} \label{imprimitive_small_k_bound}
Suppose $\pi \in S_n$ has $c$ cycles. Then $|\fix_{X_k}(\pi)| \leq k^c$.
\end{lemma}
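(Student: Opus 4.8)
The plan is to translate the count of $\pi$-invariant equipartitions into a count of colorings, for which $\pi$-invariance becomes an equation that decouples over the cycles of $\pi$. Write $[k]$ for $\{1,\dots,k\}$. Every partition of $\Omega$ into $k$ nonempty parts --- in particular every element of $X_k$, since a part has size $n/k\ge 1$ --- is the set of color classes of exactly $k!$ surjective colorings $f\colon\Omega\to[k]$, one for each labeling of the parts, and colorings attached to distinct partitions are distinct. Moreover, if $f$ is surjective and $\pi$ permutes its color classes, then $\pi$ induces a permutation $\rho\in S_k$ of the $k$ classes; that is, $f\circ\pi=\rho\circ f$. Consequently
\[
  |\fix_{X_k}(\pi)| \;\leq\; \frac{1}{k!}\sum_{\rho\in S_k} \#\{\, f\colon\Omega\to[k] : f\circ\pi=\rho\circ f\,\}.
\]

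The next step is to bound, for each fixed $\rho\in S_k$, the number of colorings $f$ with $f\circ\pi=\rho\circ f$. This equation decouples over the cycles of $\pi$: on a cycle $C$ of length $\ell$, choosing a base point $x\in C$ forces $f(\pi^j x)=\rho^j(f(x))$ for every $j$, so $f|_C$ is completely determined by the single value $f(x)\in[k]$, subject only to the consistency requirement $\rho^\ell(f(x))=f(x)$. There are at most $k$ such values on each cycle --- namely those lying in a $\rho$-cycle whose length divides $\ell$ --- so there are at most $k^c$ colorings $f$ with $f\circ\pi=\rho\circ f$, where $c$ is the number of cycles of $\pi$. Summing over the $k!$ permutations $\rho$ and dividing by $k!$ yields $|\fix_{X_k}(\pi)|\leq k^c$.

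I do not anticipate a genuine obstacle: the argument is short and elementary, and the bound $k^c$ is in fact quite wasteful. The only points needing care are the bookkeeping in the partition--coloring correspondence (each equipartition comes from exactly $k!$ surjective colorings precisely because all its blocks are nonempty), and the observation that we may harmlessly sum over all of $S_k$ rather than only the relevant $\rho$, since the overcount is absorbed by the $1/k!$ factor. One could instead try to build a $\pi$-invariant partition by processing the cycles of $\pi$ one at a time, but then one must keep track of how the blocks met by earlier cycles merge with those met by later cycles, which is considerably fiddlier; passing to colorings avoids this altogether.
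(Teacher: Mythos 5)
Your proof is correct and is essentially the paper's argument: your colorings $f$ with $f\circ\pi=\rho\circ f$ are exactly the paper's ordered partitions satisfying $\Omega_i^\pi=\Omega_{i^\tau}$, and in both cases the count decouples over the cycles of $\pi$ (at most $k$ choices per cycle via a base point), after which one sums over the $k!$ permutations of the cells and divides by $k!$.
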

\begin{proof}
Fixed points in $X_k$ correspond to $k$-partitions \eqref{k-partition} preserved by $\pi$, i.e., such that for some $\tau\in S_k$ we have
\begin{equation} \label{tau-condition}
  \Omega_i^\pi = \Omega_{i^\tau} \qquad (i \in \{1, \dots, k\}).
\end{equation}
Fix $\tau \in S_k$, and let us count the \emph{ordered} partitions satisfying \eqref{tau-condition}. For each cycle of $\pi$, pick a base point. The position of the base point in the partition $\{\Omega_1, \dots, \Omega_k\}$, together with $\tau$, determines the position of every other point in the cycle. Thus there are at most $k$ choices for how to partition the cycle, and therefore at most $k^c$ choices for the partition.

To deduce a bound for $|\fix_{X_k}(\pi)|$ we need to (1) sum over all possibilities for $\tau \in S_k$, and (2) divide by $k!$ because the order of the cells in a partition is not important, so we just get $k^c$ again.
\end{proof}

We need a stronger bound when $k$ is very large, say when $k = n/2$ or $k=n/3$. In this regime we can use the following lemma.

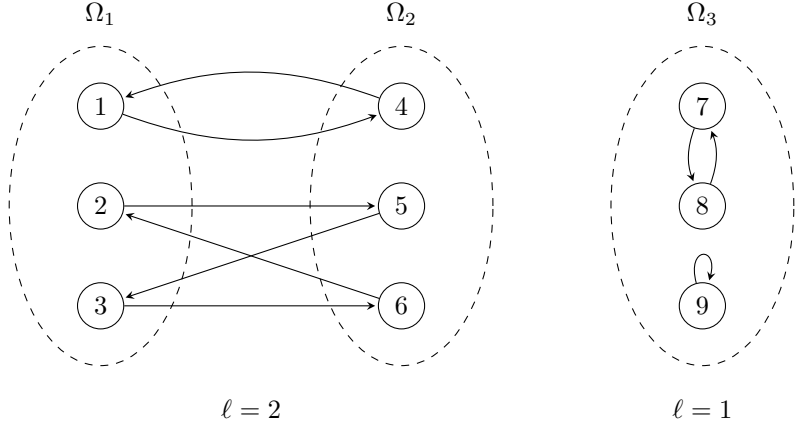
\begin{figure}
    \centering
    \begin{tikzpicture}[>=stealth,shorten >=1pt,auto,
  every node/.style={draw,circle,-},
  every fit/.style={ellipse,draw,inner sep=-4pt,text width=2cm,dashed}
]
\usetikzlibrary{chains,fit,shapes}

\begin{scope}[start chain=going below,node distance=7mm]
\foreach \i in {1,2,3}
  \node[on chain] (\i) {\i};
\end{scope}

\begin{scope}[xshift=4cm,start chain=going below,node distance=7mm]
\foreach \i in {4,5,6}
  \node[on chain] (\i) {\i};
\end{scope}

\begin{scope}[xshift=8cm,start chain=going below,node distance=7mm]
\foreach \i in {7,8,9}
  \node[on chain] (\i) {\i};
\end{scope}

\node [fit=(1) (3), label=above:$\Omega_1$, label={[xshift=2cm]below:{$\ell = 2$}}] {};
\node [fit=(4) (6), label=above:$\Omega_2$] {};
\node [fit=(7) (9), label=above:$\Omega_3$, label=below:{$\ell = 1$}] {};

\foreach \x/\y/\b in {
  1/4/20,
  4/1/20,
  2/5/0,
  5/3/0,
  3/6/0,
  6/2/0,
  7/8/20,
  8/7/20
  }
\path[->] (\x) edge [bend right=\b] (\y);

\path[->] (9) edge [loop above] (9);

\end{tikzpicture}
    \caption{A $\pi$-invariant partition with $m=3$, and the data indicated in the proof of Lemma~\ref{imprimitive_large_k_bound}}
    \label{fig:invariant-partition}
\end{figure}

\begin{lemma}\label{imprimitive_large_k_bound}
$|\fix_{X_{n/m}}(\pi)| \leq m^{O(n)} c^{(1-1/m)c}.$
\end{lemma}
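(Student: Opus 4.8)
The plan is to describe a fixed $\pi$-invariant equipartition $\Omega = \Omega_1 \cup \cdots \cup \Omega_{n/m}$ by the data suggested in Figure~\ref{fig:invariant-partition}: the permutation $\tau$ induced by $\pi$ on the $n/m$ blocks, the sizes $\ell_1, \dots, \ell_t$ of the orbits of $\tau$ (so $\sum_j \ell_j = n/m$), the partition $\mathcal Q$ of the cycles of $\pi$ induced by the $\tau$-orbits, and, for each cycle, a cyclic ``offset'' recording which block of its $\tau$-orbit contains the least point of the cycle. First I would record the structural constraints: the union $U_j$ of the blocks in the $j$-th $\tau$-orbit is $\pi$-invariant of size $\ell_j m$, every cycle of $\pi$ lies in a unique $U_j$, every cycle inside $U_j$ has length divisible by $\ell_j$, and hence — since a part of $\mathcal Q$ with $r_j$ cycles has total length $\ell_j m \ge r_j \ell_j$ — each part of $\mathcal Q$ contains at most $m$ cycles. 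Conversely, once $\mathcal Q$ is fixed (which also fixes the $\ell_j$), the partition is determined by the offsets, of which there are at most $\ell_j^{r_j}$ for $U_j$; so, over-counting,
\[
  |\fix_{X_{n/m}}(\pi)| \le \sum_{\mathcal Q} \prod_j \ell_j^{r_j},
\]
where $\mathcal Q$ ranges over set partitions of the $c$ cycles of $\pi$ into parts of size at most $m$ (those are the only $\mathcal Q$ that can arise), $r_j$ is the number of cycles in part $j$, and $\ell_j$ is one $m$-th of the total length of part $j$.

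Next I would bound the weight uniformly. Using $r_j \le m$ and AM--GM, $\prod_j \ell_j^{r_j} \le \big(\prod_j \ell_j\big)^m \le (K/t)^{tm}$ with $K = n/m$, and since $\max_{t \ge 1}(K/t)^t = e^{K/e}$ this is at most $e^{mK/e} = e^{n/e}$, which is $m^{O(n)}$ because $m \ge 2$. It then remains to count set partitions of a $c$-element set into parts of size at most $m$, and to show that there are at most $m^{O(n)} c^{(1-1/m)c}$ of them. Grouping these by their part-size profile $(a_s)_{s \le m}$ (there are at most $2^c \le m^n$ profiles), a profile with $s a_s$ elements in parts of size $s$ contributes $\frac{c!}{\prod_s (s!)^{a_s} a_s!}$, and with the elementary bounds $\log c! \le c\log c$, $\log s! \ge s\log s - s$, $\log a_s! \ge a_s\log a_s - a_s$ this is at most
\[
  \exp\!\Big( c\log c - \sum_s a_s s\log s - \sum_s a_s\log a_s + 2c \Big).
\]
Writing $b_s = s a_s$, so that $\sum_s b_s = c$ and each $b_s$ is either $0$ or $\ge 1$, the two subtracted sums rearrange as $\sum_s \tfrac{s-1}{s} b_s\log s + \sum_s \tfrac{b_s}{s}\log b_s \ge \tfrac1m \sum_s b_s\log b_s \ge \tfrac1m\big(c\log c - c\log m\big)$, the last inequality by convexity of $x \mapsto x\log x$ applied over the $m$ indices $s = 1, \dots, m$ (Jensen). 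Hence each profile contributes at most $c^{(1-1/m)c} m^{O(c)}$, and combining this with the count of profiles and with the weight bound gives $|\fix_{X_{n/m}}(\pi)| \le m^{O(n)} c^{(1-1/m)c}$.

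I expect the routine-but-fiddly point to be the reduction in the first paragraph — in particular, setting up the offsets so that the reconstruction of $\mathcal Q \mapsto \mathcal P$ is honestly an over-count and that each part of $\mathcal Q$ has at most $m$ cycles. The real content, and the step I regard as the crux, is the counting estimate in the last paragraph: extracting the precise exponent $(1-1/m)c$ rather than a weaker saving, which is exactly what the convexity bound $\sum_s b_s \log b_s \ge c\log(c/m)$ provides, and which is also where the hypothesis that $\mathcal Q$ has parts of size at most $m$ enters essentially.
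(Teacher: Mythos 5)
Your proof is correct and follows essentially the same decomposition as the paper: a $\pi$-invariant $n/m$-partition is encoded by a grouping of the cycles of $\pi$ into piles of at most $m$ cycles (your $\mathcal Q$) together with per-cycle alignment offsets, the offsets contributing $e^{O(n)} = m^{O(n)}$ and the groupings contributing $m^{O(c)}c^{(1-1/m)c}$. The only divergence is in the last count, where the paper simply bounds the number of admissible set partitions by $c^c/\floor{c/m}! \leq O(m)^{c/m}c^{(1-1/m)c}$, while you reach the same bound via part-size profiles, the exact multinomial formula, and Jensen's inequality.
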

\begin{proof}
A $\pi$-invariant $n/m$-partition is determined by the following data (see Figure~\ref{fig:invariant-partition}):
\begin{enumerate}
    \item Each $j$-cycle of $\pi$ must induce on the partition a cycle of length $\ell = j/d$ for some $d \leq m$. Label each cycle of $\pi$ by this integer $\ell$.
    \item Next, organize the cycles of $\pi$ into piles of cycles of a common label so that in any pile with label $\ell$, the sum of the lengths of the cycles is exactly $\ell m$.
    \item Finally, within each pile, determine how the cycles should be aligned.
\end{enumerate}
Clearly there are at most $m^c$ options for step 1. In step 3, we have at most $j$ options for each $j$-cycle, so all together we have at most
\[
  \prod_{j=1}^n j^{c_j} \leq \prod_{j=1}^n e^{j c_j} =  e^n
\]
options. The number of options in step 2 is bounded crudely by the number of ways of partitioning the $c$ cycles into piles of size at most $m$, which is at most
\[
  \frac{c^c}{\floor{c/m}!} \leq O(m)^{c/m} c^{c - c/m}.\qedhere
\]
\end{proof}

\begin{lemma} \label{lem:imprimitive-case}
Assume that $c + c' \leq (1 - \delta)n$ for some $\delta \geq \omega(\log \log n / \log n)$ (e.g., a constant). Then the probability that $\langle \pi, \pi'\rangle$ is contained in an imprimitive transitive subgroup is bounded by $2^{-\delta n + o(n)}$.
\end{lemma}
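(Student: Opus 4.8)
The plan is to use a union bound over all the "shapes" of imprimitive structure, controlled separately in a small-$k$ and a large-$k$ regime, using Lemma~\ref{fixed_point_lemma} to convert a fixed-point count for $\pi$ alone into a probability for $\pi$ and $\pi'^\sigma$ simultaneously. Recall that $\langle \pi, \pi'\rangle$ lies in an imprimitive transitive subgroup iff there is some $k$ with $1 < k < n$, $k \mid n$, such that $\pi$ and $\pi'$ have a common fixed point in $X_k$. For each such $k$, conjugating $\pi'$ by a uniformly random $\sigma$ is equivalent (since $S_n$ is transitive on $X_k$) to the setup of Lemma~\ref{fixed_point_lemma}, so the probability that $\pi$ and $\pi'^\sigma$ have a common fixed point in $X_k$ is at most $|\fix_{X_k}(\pi)|\,|\fix_{X_k}(\pi')|/|X_k|$. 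Thus it suffices to show
\[
  \sum_{\substack{k \mid n \\ 1 < k < n}} \frac{|\fix_{X_k}(\pi)|\,|\fix_{X_k}(\pi')|}{|X_k|} \leq 2^{-\delta n + o(n)}.
\]

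First I would treat small $k$, say $2 \le k \le n^{1/2}$ (or more generally $k \le n^{1-\Omega(1)}$). Here Lemma~\ref{imprimitive_small_k_bound} gives $|\fix_{X_k}(\pi)| \le k^c$ and $|\fix_{X_k}(\pi')| \le k^{c'}$, while $|X_k| = n!/(k!\,(n/k)!^k)$, which by Stirling satisfies $\log|X_k| = n\log n - n\log(n/k) + O(k\log n) = n\log k + O(k \log n)$ — that is, $|X_k| = k^{n - o(n)}$ in this range. Hence the $k$-th term is at most $k^{c + c' - n + o(n)} \le k^{-\delta n + o(n)} \le 2^{-\delta n + o(n)}$, using $c + c' \le (1-\delta)n$ and $k \ge 2$. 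Summing over the at most $n$ relevant values of $k$ only costs another factor of $n = 2^{o(n)}$.

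For large $k$, write $k = n/m$ with $m \le n^{1/2}$ and use Lemma~\ref{imprimitive_large_k_bound}: $|\fix_{X_{n/m}}(\pi)| \le m^{O(n)} c^{(1-1/m)c}$ and similarly for $\pi'$. For $|X_{n/m}|$ we have $\log|X_{n/m}| = n\log(n/m) - (n/m)\log(n/m) + O((n/m)\log m) \ge (1-1/m)\,n\log n - O(n)$, using $m \le n^{1/2}$ to absorb the lower-order pieces; more carefully, $|X_{n/m}| \ge (n/m)!^{\,(n/m)(m-1)} \cdot m^{-O(n)} \ge n^{(1-1/m)n}\, m^{-O(n)}$. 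Since $c, c' \le n$, the numerator contribution $c^{(1-1/m)c}\,{c'}^{(1-1/m)c'}$ is at most $n^{(1-1/m)(c+c')}\le n^{(1-1/m)(1-\delta)n}$, so the ratio is bounded by
\[
  m^{O(n)}\, n^{(1-1/m)(1-\delta)n - (1-1/m)n} = m^{O(n)}\, n^{-(1-1/m)\delta n}.
\]
This is genuinely small only once $n^{(1-1/m)\delta n}$ beats $m^{O(n)}$, i.e. roughly once $(1-1/m)\delta \log n \gg \log m$. For $m$ bounded this is automatic; the worst case is $m$ as large as $n^{1/2}$, where $\log m \asymp \tfrac12 \log n$ and we need $(1 - n^{-1/2})\delta \gg \tfrac12$, which fails for small $\delta$ — so a cruder bound is needed for the truly intermediate range.

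**The main obstacle**, accordingly, is the regime of $k$ near $n^{1/2}$ (equivalently $m$ near $n^{1/2}$), where neither lemma alone is sharp enough and the clean "$2^{-\delta n}$" only emerges after interpolating. The fix is to use the small-$k$ bound for all $k \le n^{1-\epsilon}$ (there $|X_k| = k^{n-o(n)}$ really does hold with $o(n)$ error, since $k^{O(k\log n)} = 2^{o(n)}$ when $k \le n^{1-\epsilon}$), and the large-$k$ bound only for $m \le n^{\epsilon}$ with $\epsilon$ a small constant; then in the large-$k$ case $m^{O(n)} = 2^{O(\epsilon n \log n)}$... which is still too big. The honest resolution is that for $m \le n^{\epsilon}$ one should not bound $c^{(1-1/m)c}$ by $n^{(1-1/m)c}$ but keep it as is and compare with $(n/m)!^{(1-1/m)n} \approx (n/m)^{(1-1/m)n}$: the ratio $(c/(n/m))^{(1-1/m)c}$, together with the residual $m^{(1-1/m)n}$ from the denominator's $(n/m)!$ versus $n!$ bookkeeping and the $m^{O(n)}$ from Lemma~\ref{imprimitive_large_k_bound}, combines to something of the form $(e m c / n)^{(1-1/m)(c+c')/\ldots}\cdot(\text{small})$; since $c + c' \le (1-\delta)n$ one extracts a factor $(1-\delta)^{(1-1/m)(c+c')} \le 2^{-\Omega(\delta(c+c'))}$, and provided $c + c' \ge \Omega(n)$ (if not, the imprimitive count is controlled even more easily by hand) this is $2^{-\Omega(\delta n)}$. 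The bookkeeping here is where the hypothesis $\delta \gg \log\log n/\log n$ is used: it guarantees $\delta n \gg n\log\log n/\log n$, which dominates the $O(n\log\log n/\log n)$-type slack coming from the $\floor{c/m}!$ and multinomial Stirling errors when $m$ is polynomially large. I would carry out the two regimes as above, state the intermediate-$m$ estimate as a short sub-lemma with the interpolation made explicit, and then sum the at most $2\sqrt n$ surviving terms, each $2^{-\delta n + o(n)}$, to conclude.
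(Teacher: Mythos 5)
Your overall skeleton matches the paper's (union bound over block counts $k$, Lemma~\ref{fixed_point_lemma} to pass to $|\fix_{X_k}(\pi)||\fix_{X_k}(\pi')|/|X_k|$, Lemma~\ref{imprimitive_small_k_bound} for small $k$ and Lemma~\ref{imprimitive_large_k_bound} for large $k$), and your small-$k$ computation for $k \leq n^{1-\epsilon}$ is correct. But there is a genuine gap exactly in the intermediate range you flag, and your proposed resolution does not close it. After splitting at $k = n^{1-\epsilon}$ (i.e.\ $m = n/k \geq n^{\epsilon}$ handled by the small-$k$ bound, $m \leq n^{\epsilon}$ by Lemma~\ref{imprimitive_large_k_bound}), the large-$k$ side carries the factor $m^{O(n)} = 2^{O(n\log m)}$, and the best you extract from the rest is
\[
  m^{O(n)}\,(1-\delta)^{(1-1/m)(c+c')}\, n^{-(1-1/m)\delta n}.
\]
Your final step keeps only the factor $(1-\delta)^{(1-1/m)(c+c')} \leq 2^{-\Omega(\delta(c+c'))}$, but this can never absorb $m^{O(n)}$: since $\delta \leq 1$ and $m \geq 2$, $2^{-\Omega(\delta n)}$ is always dominated by $2^{\Omega(n\log m)}$. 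The factor that must do the work is $n^{-(1-1/m)\delta n} = 2^{-(1-1/m)\delta n \log n}$, and it beats $m^{O(n)}$ only when $\log m \ll \delta \log n$. Under the hypothesis $\delta = \omega \cdot \log\log n/\log n$ (with $\omega \to \infty$ arbitrarily slowly) this means roughly $m \leq (\log n)^{o(\omega)}$; for $m$ between, say, $(\log n)^{C\omega}$ and $n^{\epsilon}$ your bound is not $2^{-\delta n + o(n)}$ and may not even be less than $1$. Your attribution of the hypothesis on $\delta$ to ``$O(n\log\log n/\log n)$-type slack when $m$ is polynomially large'' is also off: for $m = n^{\Theta(1)}$ the slack $m^{O(n)}$ is $2^{\Theta(n\log n)}$, which no admissible $\delta$ can pay for.

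The repair is to shift the crossover: use the bound $|\fix_{X_k}(\pi)| \leq k^{c}$ of Lemma~\ref{imprimitive_small_k_bound} for the entire range $2 \leq k \leq n/\log n$ (equivalently all $m \geq \log n$), keeping the exact factor $k!\,m!^{k}/n!$ rather than the clean approximation $|X_k| = k^{n - o(n)}$; the resulting expression $O(1)^k k^{c+c'+k/2-n} n^{k/2}$ is log-convex in $k$, so it is maximized at an endpoint, giving $O(2^{c+c'-n}n^{1/2})$ at $k=2$ and $(\log n)^{O(n)} n^{c+c'-n}$ at $k \asymp n/\log n$. Lemma~\ref{imprimitive_large_k_bound} is then needed only for $m \leq \log n$, where $m^{O(n)} = 2^{O(n\log\log n)}$ is affordable. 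The hypothesis $\delta \geq \omega(\log\log n/\log n)$ is used precisely at this crossover, to make $(\log n)^{O(n)} n^{-\delta n} = 2^{O(n\log\log n) - \delta n\log n} \leq 2^{-\delta n + o(n)}$; this is the step your proposal is missing.
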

\begin{proof}
By Lemmas~\ref{fixed_point_lemma} and \ref{imprimitive_small_k_bound}, the probability that $\pi$ and $\pi'$ share a fixed point in $X_k$ is bounded by
\begin{equation}\label{imprim_prob_1}
  \frac{k^{c + c'}}{|X_k|} = \frac{k^{c + c'} k! m!^k}{n!} \leq \frac{k^{c + 
  c' + k} m!^k}{n!}.
\end{equation}
Using Stirling's approximation we have
\[
  \frac{m!^k}{n!}
  \asymp \frac{O(m)^{k/2} (m/e)^n}{n^{1/2} (n/e)^n} = \frac{O(1)^k n^{k/2 - 1/2}}{k^{k/2+n}}.
\]
Hence \eqref{imprim_prob_1} is bounded by
\[
  O(1)^k k^{c + c' +  k/2 - n} n^{k/2 - 1/2}.
\]
This bound is log-convex in $k$, so its maximum in the range $2 \leq k \leq n/\log n$ occurs at one of the end points. At $k=2$ we get
\[
  O(2^{c + c' - n} n^{1/2}),
\]
and at $k \asymp n / \log n$ we get
\[
  O(1)^n (n / \log n)^{c + c' - n} = (\log n)^{O(n)} n^{c+c' - n}.
\]

For $k \geq n / \log n$ we use Lemma~\ref{imprimitive_large_k_bound} instead of Lemma~\ref{imprimitive_small_k_bound}. Let $m = n/k$. Repeating the same calculation, we find that the probability that $\pi$ and $\pi'$ share a fixed point is bounded by
\begin{align*}
  m^{O(n)} n^{(1-1/m)(c + c')} k^{k/2 - n} n^{k/2-1/2}
  &\leq m^{O(n)} n^{(1-1/m)(c + c' - n)}.
\end{align*}

We obtain a bound for the probability that $\langle \pi, \pi'\rangle$ is imprimitive by summing over $k$. The result is
\[
  O(2^{c + c' - n} n^{3/2}) + (\log n)^{O(n)} n^{c + c' - n}.
\]
If $c + c' \leq (1-\delta)n$ (where $\delta \geq \omega(\log \log n / \log n)$) then the first term dominates, and the lemma follows.
\end{proof}

\begin{remark}
The lemma is likely not sharp, but improving it would require more careful methods. For example, suppose $\pi$ and $\pi'$ both have cycle type $2^{n/2}$, so that $c + c' = n$. As in Lemma~\ref{lem:p(k)}, there is a bijection between such pairs $(\pi, \pi')$ and permutations $\tau \in S_n$ having only even cycles, and $\langle \pi, \pi'\rangle$ preserves a $k$-partition if and only if $\tau$ does. This problem is similar to the one considered by {\L}uczak and Pyber: see~\cite{luczak-pyber}.
\end{remark}

\subsection{Primitive subgroups} \label{subsec:primitive-subgroups}

Finally we must consider the possibility that $\pi$ and $\pi'$ get trapped in a primitive subgroup. To some extent we could carry on as we have done for intransitive and imprimitive subgroups, since many families of primitive subgroups of $S_n$ are explicitly given as the stabilizer of a point in some action,\footnote{This is more or less the content of the O'Nan--Scott theorem (see~\cite{lps-onan-scott-thm} for an exposition and proof).} but at some point we will need to use some deeper knowledge about simple groups. Fortunately, at this point there is a convenient sledgehammer: primitive subgroups are few and small, while every conjugacy class with few cycles is big.

\begin{lemma}
Assume $\pi$ has $c$ cycles. Then $|\pi^{S_n}| \geq n!/n^c$.
\end{lemma}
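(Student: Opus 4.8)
The plan is to bound the centralizer of $\pi$ from above, since $|\pi^{S_n}| = n!/|C_{S_n}(\pi)|$, so the claim is equivalent to $|C_{S_n}(\pi)| \leq n^c$. I would start from the standard formula for the centralizer of a permutation of cycle type $(1^{c_1}, 2^{c_2}, \dots, n^{c_n})$, namely
\[
  |C_{S_n}(\pi)| = \prod_{j=1}^n j^{c_j}\, c_j!,
\]
which records the freedom to cyclically rotate each individual cycle and to permute the cycles of each fixed length among themselves.

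The key estimate is then entirely elementary. First I would use $c_j! \leq c_j^{c_j}$ (valid for $c_j \geq 1$; the factors with $c_j = 0$ contribute $1$ and can be dropped from the product), giving
\[
  |C_{S_n}(\pi)| \leq \prod_{j : c_j \geq 1} j^{c_j} c_j^{c_j} = \prod_{j : c_j \geq 1} (j c_j)^{c_j}.
\]
Now the crucial observation is that $j c_j \leq \sum_{i=1}^n i\, c_i = n$ for every $j$, simply because $j c_j$ is one of the (nonnegative) summands in the partition of $n$ determined by $\pi$'s cycle type. Substituting $j c_j \leq n$ in each factor yields
\[
  |C_{S_n}(\pi)| \leq \prod_{j : c_j \geq 1} n^{c_j} = n^{\sum_j c_j} = n^c,
\]
and hence $|\pi^{S_n}| = n!/|C_{S_n}(\pi)| \geq n!/n^c$, as required.

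There is essentially no obstacle here: the only point requiring a word of care is the convention at $c_j = 0$, which is handled by restricting the products to indices with $c_j \geq 1$ throughout. One could also remark in passing that the bound is tight up to lower-order factors when $\pi$ is an $n$-cycle (where $c = 1$ and $|C_{S_n}(\pi)| = n$), which is the regime of interest for us.
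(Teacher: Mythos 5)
Your proof is correct and follows essentially the same route as the paper: both use the centralizer formula $\prod_j j^{c_j} c_j!$, the bound $c_j! \leq c_j^{c_j}$, and the observation $j c_j \leq n$ to conclude $|C_{S_n}(\pi)| \leq n^c$. The remark about the $c_j = 0$ convention is a harmless extra precaution.
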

\begin{proof}
We have
\[
  |\pi^{S_n}|
  = \frac{n!}{\prod_{j=1}^n j^{c_j} c_j!}
  \geq \frac{n!}{\prod_{j=1}^n (j c_j)^{c_j}}
  \geq \frac{n!}{n^c}. \qedhere
\]
\end{proof}

\begin{lemma} \label{lem:f_M-bound}
Let $M$ be a primitive maximal subgroup of $S_n$, other than $S_n$ or $A_n$. Let $f_M$ be the number of conjugates of $M$ containing $\pi$ (equivalently, the number of fixed points of $\pi$ in the action of $S_n$ on the conjugates of $M$). Then
\[
  f_M \leq n^c.
\]
\end{lemma}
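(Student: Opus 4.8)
The plan is to combine the previous two lemmas with the basic fact that primitive maximal subgroups of $S_n$ other than $S_n, A_n$ are small. First I would recall the bound $|M| \leq c_0^n$ for some absolute constant (in fact one can take much smaller, e.g. $4^n$ or even $n^{O(\log n)}$ away from a handful of wreath-product-type families, but the crude exponential bound is all we need here). The number of conjugates of $M$ in $S_n$ is $[S_n : N_{S_n}(M)] = n!/|N_{S_n}(M)| \leq n!/|M|$. Now $f_M$, the number of conjugates of $M$ containing $\pi$, counts fixed points of $\pi$ acting on this conjugacy class of subgroups; since the action is transitive, by a counting argument $f_M$ equals the number of conjugates of $M$ containing $\pi$, which is at most the total number of conjugates of $M$, namely at most $n!/|M|$.

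Next I would bring in the conjugacy class size: by the penultimate lemma, $|\pi^{S_n}| \geq n!/n^c$. The key observation is that if a conjugate $M^g$ contains $\pi$, then $M^g$ contains the entire $S_n$-class $\pi^{S_n}$ intersected with $M^g$; more usefully, summing over conjugates, $\sum_{g} |\pi^{S_n} \cap M^g|$ can be computed two ways. Actually the cleanest route: the number of pairs $(x, M^g)$ with $x \in \pi^{S_n}$ and $x \in M^g$ equals $|\pi^{S_n}| \cdot f_M$ (each $x$ in the class lies in exactly $f_M$ conjugates of $M$, by conjugation-equivariance of the whole setup) and also equals $(\text{number of conjugates of } M) \cdot |\pi^{S_n} \cap M| \leq (n!/|M|) \cdot |M| = n!$. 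Hence $f_M \leq n!/|\pi^{S_n}| \leq n^c$, which is exactly the claimed bound. This argument uses only that $\pi^{S_n}$ is a union of full conjugacy classes (trivially true) and is pleasantly free of any appeal to the structure of $M$ beyond its being a subgroup.

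The main obstacle—really the only point requiring care—is the first step's double-counting identity: one must check that every element of $\pi^{S_n}$ lies in exactly the same number $f_M$ of conjugates of $M$. This holds because $S_n$ acts transitively by conjugation on both $\pi^{S_n}$ and the set of $S_n$-conjugates of $M$, and the incidence relation ``$x \in M'$'' is conjugation-invariant; so the bipartite incidence graph is vertex-transitive on each side, forcing constant degree $f_M$ on the $\pi^{S_n}$ side. Everything else is the two displayed inequalities $|\pi^{S_n} \cap M| \leq |M|$ and the previous lemma $|\pi^{S_n}| \geq n!/n^c$, after which $f_M = n!/|\pi^{S_n}| \cdot (|\pi^{S_n} \cap M|/|M|) \cdot (\text{something} \leq 1)$ collapses to $f_M \leq n^c$. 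I would write this up in a few lines, citing the penultimate lemma for the class-size bound and noting that the primitivity and maximality of $M$ are not even needed for this particular estimate (they will be used in the subsequent argument, where $|M|$ is bounded using the classification).
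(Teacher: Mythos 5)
Your proof is correct and is essentially the paper's own argument: the same double count of pairs $(x, M^g)$ with $x \in \pi^{S_n}$, $x \in M^g$ (which the paper phrases as ``the probability that a random conjugate of $M$ contains $\pi$ equals the probability that a random conjugate of $\pi$ lies in $M$''), combined with $|\pi^{S_n} \cap M| \leq |M|$ and the previous lemma's bound $|\pi^{S_n}| \geq n!/n^c$. The only cosmetic difference is that you bound the number of conjugates by $[S_n : N_{S_n}(M)] \leq [S_n : M]$ rather than invoking maximality to get equality, which, as you note, makes maximality unnecessary for this particular estimate; the opening remark about $|M| \leq c_0^n$ is indeed not needed here and belongs to the subsequent lemma.
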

\begin{proof}
The probability that a random conjugate of $M$ contains $\pi$ is the same as the probability that a random conjugate of $\pi$ is contained in $M$, so
\[
  \frac{f_M}{|S_n:M|} = \frac{|\pi^{S_n} \cap M|}{|\pi^{S_n}|}.
\]
(That $|S_n:M|$ is the number of conjugates of $M$ follows from maximality.) Therefore, by the previous lemma,
\[
  f_M = \frac{n! |\pi^{S_n} \cap M|}{|M| |\pi^{S_n}|} \leq \frac{n!}{|\pi^{S_n}|} \leq n^c.\qedhere
\]
\end{proof}

\begin{lemma} \label{lem:primitive-case}
The probability that $\langle \pi, \pi'\rangle$ is contained in a primitive subgroup other than $S_n$ or $A_n$ is bounded by $n^{c + c' + \sqrt{n}}/n!$.

In particular, if $c+c' \leq (1- \delta)n$ then this probability is bounded by $n^{-\delta n} e^{O(n)}$.
\end{lemma}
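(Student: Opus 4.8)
The plan is to run a union bound over conjugacy classes of primitive maximal subgroups, exactly as the utility of Lemma~\ref{fixed_point_lemma} was advertised above. If $G = \langle\pi,\pi'\rangle$ is primitive and not $A_n$ or $S_n$, then $G$ is a proper transitive subgroup and lies in some maximal subgroup $M$, which is automatically primitive: $M$ is transitive (it contains the transitive group $G$), and any $M$-block system is in particular $G$-invariant, hence trivial. If $G \not\leq A_n$ we may take $M$ to be a maximal subgroup of $S_n$ with $M \neq A_n$; if $G \leq A_n$ we instead take $M$ to be a maximal subgroup of $A_n$. In all cases $M$ is a primitive permutation group of degree $n$ not containing $A_n$, so it suffices to bound, for one representative $M$ of each such conjugacy class, the probability that $\pi$ and $\pi'$ both lie in a conjugate of $M$.

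First I would fix such an $M$ and apply Lemma~\ref{fixed_point_lemma} with $X$ the set of conjugates of $M$; the relevant group ($S_n$ or $A_n$) acts transitively on $X$, and $|X|$ is the index of $M$ in it, since $M$ is self-normalizing by maximality. By Lemma~\ref{lem:f_M-bound} (and the identical computation inside $A_n$, with $n!$ replaced by $n!/2$), $\pi$ has at most $n^c$ fixed points in $X$ and $\pi'$ at most $n^{c'}$, so the probability that $\pi$ and a random conjugate of $\pi'$ have a common fixed point in $X$ --- equivalently, that $\pi$ and $\pi'$ both lie in a conjugate of $M$ --- is at most $n^{c+c'}|M|/n!$ up to a factor of $2$. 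Summing over conjugacy classes,
\[
  \P\bigl(G \text{ is primitive},\ G \neq A_n, S_n\bigr) \ll \frac{n^{c+c'}}{n!}\sum_M |M|,
\]
and the whole statement reduces to the bound $\sum_M |M| \leq n^{\sqrt n}$ for $n$ large.

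For this I would invoke the classification of primitive permutation groups (via CFSG), in the form of Mar\'oti's theorem. The primitive groups of degree $n$ not containing $A_n$ split into: (i) the ``large'' geometric families --- the action of $S_m$ on $\ell$-subsets with $\binom m\ell = n$, $\ell\geq 2$; the product action of $S_m \wr S_k$ on $[m]^k$ with $m^k = n$, $k\geq 2$; and their iterates --- and (ii) everything else (affine, diagonal, twisted-wreath, Mathieu, and almost simple of non-alternating type), for which Mar\'oti's bound gives $|M| < n^{1+\log_2 n}$. In regime (ii) there are only polynomially many conjugacy classes (again a consequence of CFSG), so the total contribution is at most $n^{O(1)}\cdot n^{1+\log_2 n} = n^{o(\sqrt n)}$. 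In regime (i) there are only $O(\log^2 n)$ conjugacy classes, and for each one Stirling's approximation gives a bound well below $n^{\sqrt n}$: the two extreme cases are $S_m$ on $2$-subsets, where $m \leq 1 + \sqrt{2n}$ forces $|M| = m! \leq n^{(1/\sqrt2 + o(1))\sqrt n}$, and $S_{\sqrt n}\wr S_2$, where $|M| = (\lfloor\sqrt n\rfloor!)^2\cdot 2 = n^{\sqrt n}\,e^{-2\sqrt n + o(\sqrt n)}$; in every case $|M| \leq n^{\sqrt n}/n^{100}$, say, for $n$ large. Adding the two regimes gives $\sum_M |M| \leq n^{\sqrt n}$, and hence $\P(G \text{ primitive},\ \neq A_n, S_n) \leq n^{c+c'+\sqrt n}/n!$. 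The main obstacle is precisely this last step: it leans on CFSG both for the order bound and for the fact that the list of conjugacy classes is short, and one has to notice that the genuinely large families lie a polynomial factor --- in fact $n^{\Omega(\sqrt n/\log n)}$ --- below $n^{\sqrt n}$, so that the merely polynomial count of all classes is comfortably absorbed.

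For the final assertion, suppose $c + c' \leq (1-\delta)n$. Since $n! \geq (n/e)^n$ and $n^{\sqrt n} = e^{\sqrt n\log n} = e^{o(n)}$,
\[
  \frac{n^{c+c'+\sqrt n}}{n!} \;\leq\; \frac{n^{(1-\delta)n+\sqrt n}\,e^n}{n^n} \;=\; n^{-\delta n}\,e^{n}\,n^{\sqrt n} \;=\; n^{-\delta n}\,e^{O(n)},
\]
as claimed.
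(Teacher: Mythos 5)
Your proof follows the paper's argument essentially verbatim: Lemma~\ref{fixed_point_lemma} applied to the set of conjugates of a maximal primitive subgroup $M$, the bound $f_M \leq n^c$ of Lemma~\ref{lem:f_M-bound}, and a union bound over conjugacy classes, the only difference being that where the paper simply cites Babai for $\sum_M |M| \leq n^{\sqrt n}$, you re-derive that bound from Mar\'oti's theorem (and you additionally treat the case where the only $S_n$-maximal overgroup is $A_n$, which the paper glosses over). One small phrasing point: the lemma's event is that $\langle \pi, \pi'\rangle$ is \emph{contained in} some primitive $H \neq S_n, A_n$, not that $\langle \pi, \pi'\rangle$ is itself primitive, so your maximal-overgroup argument should be applied to $H$ rather than to $G$; with that substitution everything goes through unchanged.
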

\begin{proof}
Let $M$ be a maximal primitive subgroup other than $S_n$ or $A_n$. By Lemma~\ref{fixed_point_lemma} again, along with the previous lemma, the probability that $\pi, \pi'$ are both contained in a conjugate of $M$ is bounded by
\[
  \frac{f_M f'_M}{|S_n : M|} \leq \frac{|M| n^{c + c'}}{n!}.
\]

Now as in Babai~\cite[Lemma~2.5 and the proof of Theorem~1.4]{babai_prob_gen}, the sum of $|M|$ over all conjugacy classes of primitive maximal subgroups of $S_n$ (other than $S_n$ and $A_n$) is bounded by $n^{\sqrt{n}}$. This proves the lemma.
\end{proof}

\begin{remark}
There do exist permutations $\pi$ such that $\pi$ and a random conjugate are trapped in a common primitive subgroup with positive probability. For example, suppose $n = k^2$, and consider $S_k \wr S_2$ acting on the cartesian square $\{1, \dots, k\}^2$. This gives us a map $S_k \wr S_2 \to S_n$ whose image is a primitive subgroup $M$ of order $2 k!^2$, and different conjugates of $M$ correspond to different labelling maps $\{1, \dots, k\}^2 \to \{1, \dots, n\}$. Let $\pi$ be the element of $M$ which swaps the first two rows, and let $\pi'$ be the element which swaps the third and fourth rows. Then $\pi$ and $\pi'$ are elements of cycle type $2^k$ with disjoint supports, and conversely any two elements of cycle type $2^k$ with disjoint supports are contained in a conjugate of $M$. Note that two random elements of cycle type $2^k$ have disjoint supports with probability $\sim e^{-4}$.

This may be essentially the only counterexample, however. We formalize this in the following conjecture, which we have so far not been able to prove.
\end{remark}

\begin{conjecture}
Let $\calC, \calC'\subset S_n$ be fixed nontrivial conjugacy classes, and let $\pi\in\calC$ and $\pi'\in\calC'$ be random. Then almost surely $\langle \pi, \pi'\rangle$ is not contained in any primitive subgroup apart from $S_n$, $A_n$, and any conjugate of $S_{\sqrt{n}} \wr S_2$ or $A_{\sqrt{n}} \wr S_2$.
\end{conjecture}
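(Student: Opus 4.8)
The natural framework is that of Lemma~\ref{lem:primitive-case}: it suffices to prove $\sum_M f_M f'_M/|S_n:M| = o(1)$, where $M$ runs over representatives of the conjugacy classes of primitive maximal subgroups of $S_n$ other than $A_n$, $S_n$, and conjugates of $(A_{\sqrt n}\text{ or }S_{\sqrt n})\wr S_2$. The obstacle to reusing that proof is the estimate $f_M \le n^c$: it loses a factor of order $e^n$ in the worst case (for instance at cycle type $2^{n/2}$, where $n^c = n^{n/2}$ but the centralizer order is only $n^{n/2}e^{-\Theta(n)}$), so it cannot be summed over $M$ once $c+c'$ is close to $n$. The plan has three ingredients: (i) use minimal-degree bounds to dispose of permutations of small support and to restrict which $M$ are relevant to a given $\pi$; (ii) replace $f_M\le n^c$ by a bound reflecting the cycle type of $\pi$ inside $M$; and (iii) --- crucially --- analyse $\pi$ and $\pi'$ \emph{jointly}, since the true phenomenon is that they seldom lie in a common conjugate of $M$ even when each lies in very many conjugates of $M$.

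For (i): by the classical lower bounds on the minimal degree of a primitive group (Bochert, Wielandt, Babai) together with the classification of the primitive groups attaining minimal degree $O(\sqrt n)$, the only primitive groups of degree $n$ other than $A_n$, $S_n$ containing a nontrivial element of support at most $2\sqrt n$ are conjugates of $(A_{\sqrt n}\text{ or }S_{\sqrt n})\wr S_2$. Hence if $\pi$ or $\pi'$ moves at most $2\sqrt n$ points we are done, and otherwise both $\pi$ and $\pi'$ move more than $2\sqrt n$ points. A family-by-family sharpening of the same principle further limits, for each $M$, which $\pi$ it can contain (e.g.\ an affine group has minimal degree $\geq n/2$, so contains no $\pi$ moving fewer than $n/2$ points), and in particular forces the centralizer order $z(\pi)=\prod_j j^{c_j}c_j!$ to be well below $n!$ whenever $\pi$ is relevant to $M$.

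For (ii) and (iii) I would stratify by the O'Nan--Scott theorem. For the subexponential strata --- affine, diagonal, twisted wreath, almost simple other than $A_n,S_n$, and product action $H\wr S_k$ on $\Delta^k$ with $k\geq 3$, all of order $n^{o(n)}$ --- the substitute $f_M\le|M|=n^{o(n)}$, together with the smallness of $z(\pi),z(\pi')$ from (i), should make each summand $e^{-\Omega(n)}$; and there are at most $n^{O(\sqrt n)}$ classes, so their total contribution is $o(1)$. This is essentially the proof of Lemma~\ref{lem:primitive-case} with $f_M\le n^c$ improved to $f_M\le|M|$. The genuinely hard case is the product action $H\wr S_2$ on $\Delta^2$ with $|\Delta|=\sqrt n$ and $H$ close to $S_{\sqrt n}$ or $A_{\sqrt n}$ --- the unique product-action family that is not subexponential --- where $f_M$ is comparable to $|M|$ and no one-variable bound suffices.

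Here one must show that a random $\pi$ and a random $\pi'$ of the prescribed cycle types are simultaneously ``grid permutations'' for a common $\sqrt n\times\sqrt n$ labelling with probability $o(1)$. Although each of $\pi$, $\pi'$ typically lies in enormously many conjugates of $M$, lying in a \emph{common} conjugate forces the supports of $\pi$ and $\pi'$ --- and the matchings induced on them --- to be compatible in a rigid way (the overlap of the two supports must be a union of cycles of each, and must sit inside the grid as a subgrid), and this fails with probability $1-o(1)$ unless $\pi$ and $\pi'$ each move only $O(\sqrt n)$ points \emph{and} their supports are disjoint; that last scenario is exactly the $e^{-4}$ situation of the Remark above, and it is possible only because in $S_{\sqrt n}\wr S_2$ the support of a ``row swap'' ($2\sqrt n$ points) is much smaller than the degree. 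Carrying out this joint analysis uniformly over all admissible cycle types, and verifying that no other O'Nan--Scott family (in particular no $k\geq 3$ product action and no $S_m$ or $A_m$ acting on $2$-subsets, where supports always overlap) admits this ``disjoint supports'' escape, is the step I expect to be the main obstacle, and is presumably why the conjecture is still open.
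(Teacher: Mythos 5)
This statement is a \emph{conjecture}, which the authors explicitly say they have not been able to prove, so there is no proof in the paper to compare against; the question is only whether your argument closes the gap, and it does not. What you give is a reduction programme: the easy strata (subexponential primitive groups, handled by $f_M \leq |M|$ together with Lemma~\ref{fixed_point_lemma}, exactly as in Lemma~\ref{lem:primitive-case}) are fine, and the identification of the product action $S_{\sqrt n} \wr S_2$ (and $A_{\sqrt n}\wr S_2$) as the genuinely hard family is correct --- but the decisive step, the \emph{joint} analysis showing that two random elements of arbitrary fixed nontrivial cycle types lie in a common conjugate of this $M$ with probability $o(1)$, is precisely what you defer, and you say yourself that it is ``the main obstacle''. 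A plan whose central step is declared open is not a proof of an open conjecture.

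Moreover, the structural claim you lean on in that hard case is false as stated. You assert that if $\pi$ and $\pi'$ lie in a common conjugate of $M$ then the overlap of their supports must be a union of cycles of each; but take $\pi$ the element of $M$ swapping rows $1$ and $2$ of the grid pointwise and $\pi'$ the element swapping rows $2$ and $3$: both lie in $M$, their supports overlap exactly in row $2$, and row $2$ is not a union of $2$-cycles of either element. Consequently the claimed dichotomy (``probability $1-o(1)$ of failure unless both elements move $O(\sqrt n)$ points with disjoint supports'') is not established, and it also overlooks relevant classes of large support, e.g.\ the class of the coordinate flip $(x,y)\mapsto(y,x)$, of cycle type $(1^{\sqrt n}, 2^{(n-\sqrt n)/2})$, whose members lie in very many conjugates of $M$; ruling out a common conjugate for two random such elements is exactly the kind of estimate your sketch does not supply. (Your minimal-degree step also needs more care at the stated constant: $S_m$ acting on $2$-subsets has minimal degree $\sim 2\sqrt{2n}$, so the precise classification of primitive groups with an element of support at most $2\sqrt n$ must be quoted accurately, e.g.\ from Liebeck--Saxl, rather than inferred from the $\Omega(\sqrt n)$ bound alone.) In short: correct framing, correct identification of the bottleneck, but the conjecture remains unproved by this proposal.
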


In any case, we have now proved under the hypotheses of Theorem~\ref{main-thm} that $G = \langle \pi, \pi'\rangle$ is almost surely not a transitive subgroup smaller than $A_n$, which completes the proof.

\section{Generators with a given order}\label{sec:fixed-order}
In this section we will apply Theorem~\ref{main-thm} to deduce some results concerning random generation of $S_n$ under order constraints. Let $\ordn=\{\ord \pi : \pi \in S_n\}$. For $m \in \ordn$, we are interested in the probability that two random elements of order $m$ generate at least $A_n$.

We may think of drawing a random $\pi\in S_n$ of order $m$ in two stages: first we pick a conjugacy class $\calC$ of elements of order $m$ with probability proportional to $|\calC|$, then we pick $\pi \in \calC$ uniformly. As a consequence of this and Theorem~\ref{main-thm}, we have the following criteria:

\begin{enumerate}
\item Assume almost all elements of order $m$ have $o(n^{1/2})$ fixed points and $o(n)$ $2$-cycles. Then two random elements of order $m$ almost surely generate at least $A_n$.
\item Assume almost all elements of order $m$ have $O(n^{1/2})$ fixed points and ${n/2-\Omega(n)}$ $2$-cycles. Then two random elements of order $m$ generate at least $A_n$ with probability bounded away from zero.
\end{enumerate}
Clearly, the converses in Theorem~\ref{main-thm} also apply. For instance, if a positive proportion of elements of order $m$ have $\Omega(n^{1/2})$ fixed points, the probability of generating at least $A_n$ will be bounded away from $1$.

Therefore, our business in this section is to understand which conditions on $m$ ensure that a random element of order $m$ has few fixed points and few $2$-cycles. This condition is certainly not automatic, as the following examples show.

\begin{example}
    The case $m = 2$ is rather special. A random permutation of order $2$ has $\sim n^{1/2}$ fixed points (see~\cite[Proposition~IX.19]{flajolet-sedgewick}) so random pairs of elements of order $2$ do not generate. In fact, since any two elements of order $2$ generate a dihedral group, indeed no two elements of order $2$ generate $A_n$.
\end{example}

\begin{example}
    Suppose $m  = p$ for some prime $p \sim 3n/4$. Then any element of order $m$ has $\Omega(n)$ fixed points, so random elements of order $m$ almost surely do not generate.
\end{example}

\begin{example}
    Similarly, suppose $m = 2p$ for some $p \sim 3n/4$. Then all $\pi$ of order $m$ have one $p$-cycle and either $\Omega(n)$ fixed points or $\Omega(n)$ $2$-cycles. It is not hard to see that random elements of order $m$ have $\Theta(n^{1/2})$ fixed points, so by Theorem~\ref{main-thm} random pairs of elements of order $m$ generate with probability bounded away from $0$ and from $1$.
\end{example}

\begin{example}
    There are also examples with much larger $m$. Let $p_1 < \cdots < p_k$ be primes, let $m = p_1 \cdots p_k$, and let
    \[
      n = \sum_{i=1}^k p_i + p_1 - 1.
    \]
    Then any $\pi \in S_n$ has one $p_i$-cycle for each $i$ and $p_1 - 1$ fixed points. If $p_1$ is large compared to $n^{1/2}$ then random pairs of permutations of order $m$ almost surely do not generate.
\end{example}

In each of these examples, the arithmetic of $m$ guarantees that \emph{all} permutations of order $m$ have either many fixed points or many $2$-cycles. As a last resort, one might hope at least for a ``zero--one law'': perhaps as long as there is at least one permutation of order $m$ with few fixed points and few $2$-cycles, then almost all permutations of order $m$ are such. Our last example also dishes this hope.

\begin{example} Let $p$ and $q$ distinct primes of roughly the same size, $p < q$, let $n = pq + p - 1$, and let $m = pq$. There are just two ways that $\pi \in S_n$ can have order $m$:
    \begin{enumerate}
        \item $\pi$ might be a $pq$-cycle. Any such $\pi$ has $p-1 \sim n^{1/2}$ fixed points.
        \item $\pi$ has cycle type $p^\mu q^\nu$ for some $\mu, \nu \geq 1$. There is some $\pi$ of this type with no fixed points (by the Frobenius postage stamp problem).
    \end{enumerate}
    The proportion of elements of the first type is
    \[
      \frac1{p q (p-1)!} \approx \frac1{(n^{1/2})!}.
    \]
    The proportion of elements of the second type is at most
    \[
      \frac1{p^\mu \mu! q^\nu \nu! (n - \mu p - \nu q)!} \lesssim \frac1{(n^{1/2})!^2}.
    \]
    Summing over $\mu, \nu$ adds just another two factors of $n$, so almost all $\pi$ of order $m$ are $pq$-cycles. Therefore the probability that two random elements of order $m$ generate is bounded away from $1$.
\end{example}

Having moderated our expectations, we will prove the following positive results. Assume $m \in \ordn$.
\begin{enumerate}
    \item If $m$ has a divisor $d$ in the range $3 \leq d \leq o(n^{1/2})$, then two random elements of order $m$ almost surely generate.
    \item If $m$ is even and there is at least one $\pi \in S_n$ with $o(n^{1/2})$ fixed points and $o(n)$ $2$-cycles, then two random elements of order $m$ almost surely generate.
    \item If $m$ is odd and there is at least one $\pi \in S_n$ with $O(n^{1/2})$ fixed points, then two random elements of order $m$ generate with positive probability.
    \item If $m > 2$ is even, then two random elements of order $m$ generate with positive probability.
\end{enumerate}
The first two of these points are the content of Theorem~\ref{thm:fixed-order-almost-sure}; the latter two are the content of Theorem~\ref{thm:fixed-order-positive-prob}.

\subsection{Theorem~\ref{thm:fixed-order-almost-sure}: Almost-sure generation}

We will use a simple computation a couple of times: for convenience we isolate the statement.

\begin{lemma} \label{easy_computation}
Let $\calC$ be a conjugacy class of $S_n$, and let $2 \leq d \leq n$. Assume $c_1 \geq 2kd$ for some $k \geq 1$. Define $\calC '$ by replacing $kd$ fixed points by $k$ $d$-cycles, i.e., $c'_1 = c_1-kd$, $c'_d=c_d+k$, and $c'_j=c_j$ for every other $j$. Then
\[
 \frac{|\calC|}{|\calC '|} \leq  \frac{(c_d+k)^k d^k}{(kd)^{kd}} \leq  \left( \frac{n}{(kd)^d} \right)^k.
\]
\end{lemma}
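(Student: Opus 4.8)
The plan is to write both conjugacy class sizes via the standard centralizer formula and cancel everything except the factors that change. Recall that for a conjugacy class $\calC$ with $c_j$ $j$-cycles we have $|\calC| = n! / \prod_j j^{c_j} c_j!$. Since $\calC$ and $\calC'$ agree in every part except $j = 1$ and $j = d$, the ratio telescopes to
\[
  \frac{|\calC|}{|\calC'|} = \frac{1^{c'_1} c'_1!\; d^{c'_d} c'_d!}{1^{c_1} c_1!\; d^{c_d} c_d!}
  = \frac{(c_1 - kd)!\; d^{c_d + k} (c_d + k)!}{c_1!\; d^{c_d} c_d!}
  = \frac{(c_d + k)!}{c_d!}\; \frac{d^k}{c_1!/(c_1 - kd)!}.
\]

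Now I would bound each of the two surviving fractions crudely. First, $(c_d+k)!/c_d! = (c_d+1)(c_d+2)\cdots(c_d+k) \leq (c_d+k)^k$. Second, $c_1!/(c_1-kd)! = c_1 (c_1-1) \cdots (c_1 - kd + 1) \geq (c_1 - kd + 1)^{kd} \geq (kd)^{kd}$, where the last step uses the hypothesis $c_1 \geq 2kd$ (so $c_1 - kd + 1 \geq kd + 1 > kd$). Combining these gives
\[
  \frac{|\calC|}{|\calC'|} \leq \frac{(c_d + k)^k d^k}{(kd)^{kd}},
\]
which is the first claimed inequality.

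For the second inequality, note that $c_d + k \leq n/d$: the $c_d$ $d$-cycles and the $k$ new $d$-cycles together occupy at most $n$ points, i.e.\ $(c_d + k) d \leq n$. Hence $(c_d + k)^k d^k \leq (n/d)^k d^k = n^k$, and therefore
\[
  \frac{(c_d + k)^k d^k}{(kd)^{kd}} \leq \frac{n^k}{(kd)^{kd}} = \left( \frac{n}{(kd)^d} \right)^k,
\]
as required. I expect no real obstacle here; the only point needing a little care is invoking the hypothesis $c_1 \geq 2kd$ at exactly the right moment to turn the falling factorial $c_1!/(c_1-kd)!$ into a clean lower bound of $(kd)^{kd}$, and remembering that the $d$-cycles fit inside the ground set to get $c_d + k \leq n/d$.
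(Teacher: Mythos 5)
Your proof is correct and follows essentially the same route as the paper: write both class sizes via the centralizer formula $|\calC| = n!/\prod_j j^{c_j} c_j!$, cancel all factors except $j=1$ and $j=d$, and bound the surviving ratios. In fact you spell out two points the paper leaves implicit (and whose final display even appears to have a typo, showing $(kd)^k$ rather than $(kd)^{kd}$): the lower bound $c_1!/(c_1-kd)! \geq (c_1-kd+1)^{kd} \geq (kd)^{kd}$ from the hypothesis $c_1 \geq 2kd$, and the observation $(c_d+k)d \leq n$ needed for the second inequality.
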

\begin{proof}
\[
  \frac{|\calC|}{|\calC'|} = \frac{\prod_j c'_j ! j^{c'_j}}{\prod_j c_j! j^{c_j}} = \frac{(c_1 - kd)! (c_d+k)! d^k}{c_1! c_d!} \leq \frac{(c_d + k)^k d^k}{(kd)^k} . \qedhere
\]
\end{proof}

To prove the first part of Theorem~\ref{thm:fixed-order-almost-sure} we must show that if $m$ has a small divisor (other than $2$) then almost all $\pi$ of order $m$ have $o(n^{1/2})$ fixed points and $o(n)$ $2$-cycles. This is the content of the following lemma.

\begin{lemma} \label{good_m_calculation}
Assume $m$ has a divisor $d$ such that $3 \leq d \leq o(n^{1/2})$. Then almost all permutations of order $m$ have $o(n^{1/2})$ fixed points and $o(n)$ $2$-cycles.
\end{lemma}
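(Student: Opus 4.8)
The plan is to estimate, via a union bound over the ``bad'' events, the proportion of order-$m$ permutations with $\geq \omega_n n^{1/2}$ fixed points or $\geq \omega_n' n$ $2$-cycles, where $\omega_n, \omega_n' \to \infty$ arbitrarily slowly, and to show this proportion is $o(1)$. Fix a slowly growing $\omega_n$; I will show the proportion of order-$m$ permutations with at least $t := \lceil \omega_n n^{1/2}\rceil$ fixed points is $o(1)$, and similarly with $2$-cycles. Since there are only polynomially many conjugacy classes of order-$m$ elements (at most $\exp(O(n^{1/2}))$, in fact far fewer than the relevant saving), it suffices to bound, for each conjugacy class $\calC$ of order-$m$ elements with $c_1 \geq t$, the ratio $|\calC|/(\text{total number of order-}m\text{ elements})$, and the key point is to compare $|\calC|$ against a single comparison class obtained by a cycle-surgery that preserves having order $m$.

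The central device is Lemma~\ref{easy_computation}: given $\calC$ with $c_1 \geq 2kd$ fixed points, replacing $kd$ of them by $k$ $d$-cycles yields a class $\calC'$ (still of order $m$, since $d \mid m$ and $d \geq 3$ so the order is unchanged) with $|\calC|/|\calC'| \leq (n/(kd)^d)^k$. Taking $kd \asymp c_1 \gg \omega_n n^{1/2}$, and using $d \geq 3$, the quantity $(kd)^d \geq (kd)^3 \gg (\omega_n n^{1/2})^3 = \omega_n^3 n^{3/2} \gg n$ once $n$ is large, so the ratio is at most $(1/\omega_n^{3})^{k}$ or better; since $k \asymp c_1/d \gg \omega_n n^{1/2}/o(n^{1/2}) \to \infty$, this ratio is superpolynomially small, easily swamping the count of conjugacy classes. (Here one must also check that the surgery is legitimate, i.e. that we do not overshoot $c_1$; this is why we phrase things with $c_1 \geq 2kd$, choosing $k = \lfloor c_1/(2d)\rfloor$ say.) Summing over all order-$m$ classes $\calC$ with $c_1 \geq t$ then gives that the total proportion of such elements is $o(1)$. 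The same argument handles $2$-cycles: if $c_2 \geq \omega_n' n$ then there are $\gg \omega_n' n / d$ disjoint ways to group $2$-cycles into a $d$-cycle-plus-remainder of total length $2$ blocks --- more cleanly, one applies the analogous surgery replacing, say, a batch of $2$-cycles by $d$-cycles (when $d$ is odd; if $d$ is even replace pairs of $2$-cycles, or reduce to the fixed-point case after first noting $d$ odd is the substantive case), again gaining a factor that is a large power of $(1/\omega_n')$ or $(1/n)$ to a power tending to infinity. I would state and prove a second easy-computation lemma of the Lemma~\ref{easy_computation} type adapted to converting $2$-cycles, or simply observe that replacing $2$-cycles by $d$-cycles gives $|\calC|/|\calC'| \leq ((c_d+k)^k d^k 2^{(d/2)k\cdot\text{something}})/\ldots$, a routine variant.

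The step I expect to be the main obstacle is the $2$-cycle bound when $m$ is not divisible by an odd small number but only by $d$ even (e.g. $d = 4$): then replacing $2$-cycles by $d$-cycles is the right move, but one must be careful that the resulting class still has order exactly $m$ (it does, since $\lcm$ is unaffected by adding $d$-cycles when $d \mid m$) and that the arithmetic saving $(kd)^d/2^{?} \gg n$ still goes through --- with $d \geq 4$ and $kd \gg \omega_n' n^{1/2}$ one gets $(kd)^d \geq (kd)^4 \gg n^2$, comfortably enough. One subtlety is that when $d = 3$ one cannot directly convert $2$-cycles into $3$-cycles of the same support size; instead convert three $2$-cycles (support $6$) into two $3$-cycles (support $6$), which is parity- and order-compatible, or convert a $2$-cycle into nothing is illegal, so the grouping must be done in blocks of matched support and the bookkeeping checked. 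None of this is deep, but it is where care is needed; I would isolate it as a short auxiliary computation paralleling Lemma~\ref{easy_computation}. Once both bad events are shown to have $o(1)$ probability, the lemma follows.
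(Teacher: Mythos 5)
Your central device is the same as the paper's: the surgery of Lemma~\ref{easy_computation}, replacing fixed points (or $2$-cycles) by $d$-cycles, which preserves the order because $d\mid m$ and not all $2$-cycles are removed. The genuine gap is in the global accounting. You bound each bad class $\calC$ by $|\calC|\leq(\text{ratio})\cdot|\calC'|\leq(\text{ratio})\cdot\#\{\text{order-}m\text{ elements}\}$ and then take a union bound over bad classes, so you need $(\text{number of bad classes})\times(\text{worst-case ratio})=o(1)$. But the saving you actually state is $(\omega_n^{-3}n^{-1/2})^{k}$ with $k=\lfloor c_1/(2d)\rfloor$, i.e.\ $n^{-\Theta(k)}$ where $k$ may grow arbitrarily slowly (when $d$ is close to $n^{1/2}$ and $c_1$ is just above the threshold $t\approx\omega_n n^{1/2}$), and ``superpolynomially small'' does not swamp the number of conjugacy classes of order-$m$ elements, for which the only a priori bound is the partition function $e^{O(\sqrt n)}$. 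This is not a phantom worry: take $m=2p_1\cdots p_r$ with $r\approx n^{1/5}$ distinct primes $p_i\approx n^{2/5}$, so every admissible $d$ is $\approx n^{2/5}$. Duplicating an arbitrary subset of the mandatory $p_i$-cycles, keeping exactly about $t$ fixed points, and filling the remaining $\approx n$ points with $2$-cycles produces $e^{\Omega(n^{1/5})}$ distinct order-$m$ classes sitting right at the fixed-point threshold, where your $k\approx\omega_n n^{1/10}$ and your stated per-class saving $e^{-O(\omega_n n^{1/10}\log n)}$ is far too weak to cover the class count. So the parenthetical assertion that the class count is ``far fewer than the relevant saving'' is precisely the unproved step, and as written the union bound fails.

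The argument is repairable in two ways. Keeping your union bound, you must use the full strength of Lemma~\ref{easy_computation}: with $k=\lfloor c_1/(2d)\rfloor$ the bound is $\exp\bigl(k\log n-kd\log(kd)\bigr)$, and since $kd\asymp c_1\geq\omega_n n^{1/2}$ and $d\geq 3$ this is $e^{-\Omega(c_1\log n)}=e^{-\Omega(\omega_n n^{1/2}\log n)}$, which does beat $e^{O(\sqrt n)}$ classes. The paper's route is cleaner and needs no class counting at all: fix a single $k$ (the same for every class) with $n^{1/2}/\log n\leq kd\leq o(n^{1/2})$, note that $\calC\mapsto\calC'$ is then \emph{injective} on the classes with $c_1\geq 2kd$, and conclude $\sum_{\text{bad}}|\calC|\leq o(1)\sum|\calC'|\leq o(1)\cdot\#\{\text{order-}m\text{ elements}\}$; an $o(1)$ ratio per class suffices. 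For the $2$-cycle statement the uniform surgery, valid for every $d\geq 3$ whether even or odd, is to replace $kd$ $2$-cycles by $2k$ $d$-cycles (same support $2kd$), with $k$ chosen so that $n/\log n\leq kd\leq o(n)$; this disposes of your support-matching worry for $d=3$, and since that threshold is linear in $n$ the accounting there is not delicate.
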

\begin{proof}
Let $k$ be such that $n^{1/2}/ \log n \leq kd \leq o(n^{1/2})$. We will show that almost all permutations of order $m$ have at most $2kd = o(n^{1/2})$ fixed points. If a conjugacy class $\calC$ is made of elements with $c_1 \geq 2kd$, define $\calC '$ as in the previous lemma. We have
\[
  \frac{|\calC|}{|\calC'|} \leq \left( n^{-1/2+o(1)}\right)^k =o(1).
\]
Therefore we have injectively associated to every conjugacy class of permutations of order $m$ with $c_1 \geq 2kd$ some much larger conjugacy class of permutations of order $m$, so it follows that almost all $\pi \in S_n$ of order $m$ have fewer than $2kd = o(n^{1/2})$ fixed points.

The proof for $2$-cycles is the same, starting with $k$ such that $n/ \log n \leq kd \leq o(n)$. If $c_2 \geq 2kd$, we define $c'_2 = c_2-kd$ and $c'_d=c_d+2k$, and a similar computation concludes the proof.
\end{proof}

Assume now $m$ is even. Then we can prove the zero--one law suggested earlier.

\begin{lemma}\label{good_m_calculation_2}
Assume $m$ is even, and assume there is an element of order $m$ having $o(n^{1/2})$ fixed points and $o(n)$ $2$-cycles. Then almost all elements of order $m$ have the same property.
\end{lemma}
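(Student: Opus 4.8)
The plan is to argue, exactly as in Lemmas~\ref{easy_computation} and~\ref{good_m_calculation}, that conjugacy classes of order-$m$ elements with ``too many'' fixed points or ``too many'' $2$-cycles are negligible: given such a class $\calC$, one injects it into a much larger class $\calC'$ of order-$m$ elements obtained by fusing several short cycles of $\calC$ into a single longer cycle whose length still divides $m$ (so the order is unchanged), the gain being governed by Lemma~\ref{easy_computation}. The first move is a reduction. If $m$ has a divisor $d$ with $3\le d\le o(n^{1/2})$ -- in particular if $4\mid m$ -- then fusing fixed points into $d$-cycles, and $2$-cycles into $d$-cycles, proves the conclusion outright (without even using the hypothesis); this is exactly Lemma~\ref{good_m_calculation}. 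So we may assume $m=2m'$ with $m'$ odd. If $m'=1$ then $m=2$, and since $c_1+2c_2=n$ for every element of order $2$ the hypothesis fails and the statement is vacuous; otherwise $4\nmid m$ and the least prime $q$ of $m'$ is not $o(n^{1/2})$, say $q\gtrsim n^{1/2}$ (and $q\le n$).

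In this remaining regime every cycle of length $\ge 3$ in an order-$m$ element has length divisible by an odd prime of $m'$, hence $\ge q$, so the only ``small'' cycles are fixed points and $2$-cycles: one should picture an order-$m$ element as a collection of cycles of length $\ge q$ (of lengths dividing $m$, with the correct lcm) together with a residual involution on $r:=c_1+2c_2$ points. The injection ``fuse $q$ residual points into one $q$-cycle'' is available whenever $r\gtrsim q$, and a Stirling computation shows it is profitable: writing the count of the stratum of given residual size as (roughly) $\binom{n}{r}\cdot(\#\text{ fillings of }n-r\text{ by }(\ge q)\text{-cycles})\cdot(\#\text{ involutions of }S_r)$, one finds that decreasing $r$ by $q$ multiplies the count by at least $q^{\,q}/n\gg1$ (because $q\gtrsim n^{1/2}$). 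Hence the dominant strata have $r$ as small as possible; here the hypothesis does its essential work, guaranteeing via $\pi_0$ (whose residual is $c_1^{(0)}+2c_2^{(0)}=o(n)$) that strata with $r=o(n)$ exist at all -- ruling out configurations like $m=2q$ with $q\asymp n$ and $n-q$ of linear size. Within the dominant strata the number of $2$-cycles is at most $r/2=o(n)$, while the number of fixed points is the number of fixed points of a (conditioned) random involution on an $r$-point set with $r=o(n)$, hence concentrated around $O(\sqrt r)=o(n^{1/2})$ with a geometrically small upper tail. This gives the conclusion for both statistics; the $2$-cycle bound can alternatively be obtained directly by fusing $q$ $2$-cycles into a $2q$-cycle when $2q\le n$.

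The main obstacle is precisely this last regime, in which $m$ has no divisor in $[3,n^{1/2})$: there is no cheap injection that removes a sublinear-but-$\gg\sqrt n$ batch of fixed points, so one cannot push the fixed-point count below $\sqrt n$ by a single move and must instead show that the dominant strata outweigh the bad cycle types. Making the stratification rigorous is the technical heart of the argument: the Stirling error terms have to be controlled uniformly in $r$ (and over the choice of $(\ge q)$-filling), and when $m'$ has several prime factors -- necessarily all $\gtrsim n^{1/2}$ -- the ``residual'' part is not a pure involution but carries a bounded number of additional long cycles, so one must first organise the possible ``cores'', using $\pi_0$ to pin them down and to establish the relation $n=(\text{core size})+o(n)$, before running the same comparison.
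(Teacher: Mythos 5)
Your plan is genuinely different from the paper's proof, and it is worth recording how much simpler the paper's route is: there one only trades $2k$ fixed points for $k$ $2$-cycles (Lemma~\ref{easy_computation} with $d=2$), which shows that classes with $\geq 4k$ fixed points are negligible unless they carry $\geq k^2$ $2$-cycles; permutations with $\geq k^2$ $2$-cycles have density at most $1/(k^2)!$ in $S_n$, while the hypothesized class $\calC_0$ has density at least $n^{-o(n)}$ (its elements have $o(n)$ cycles once the small-divisor case is delegated to Lemma~\ref{good_m_calculation}); choosing $k=o(n^{1/2})$ with $(k^2)!$ large enough finishes. No stratification by residual size, no fusion into long cycles, and no analysis of fixed points of random involutions is needed: one absolute upper bound against one absolute lower bound suffices.

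Measured against that, your sketch has a genuine gap at its central step. The fusion comparison, as you state it, only shows that the mass decays as the residual $r$ increases in steps of $q$ (or $2q$); by itself this confines almost all of the mass to $r=O(q)$, which is $o(n)$ only when $q=o(n)$. In the remaining case $q\asymp n$ --- which is exactly the regime where the hypothesis is doing real work (compare the paper's examples with $m=2p$, $p\asymp n$) --- you defer the decisive comparison to an unspecified ``organisation of the cores'', so the claim that the dominant strata have $r=o(n)$, on which your involution-concentration conclusion rests, is not actually established. There are further unaddressed points in the fusion itself: the move must preserve the order \emph{exactly} $m$, so one may not consume all the even cycles when the long cycles have odd lcm (fusing ``$q$ residual points'' can do precisely this), and when the residual consists only of $2$-cycles no odd batch of size $q$ is available, so the move has to be modified (and injectivity across the several move types re-checked). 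All of this can very likely be carried out --- your structural picture of order-$m$ elements as long cycles plus a residual involution is correct, as is the concentration of the fixed-point count of a random involution at $\sqrt r$ --- but these deferred items are the actual content of the argument, so as written the proposal is a plausible plan rather than a proof, and a substantially heavier one than the comparison the paper uses.
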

\begin{proof}
Applying Lemma~\ref{easy_computation} with $d=2$, we get
\[
  \frac{|\calC|}{|\calC '|} \leq \left(\frac{c_2+k}{2k^2}\right)^k.
\]
This will be small unless $c_2 \geq k^2$. This means that the number of permutations of order $m$ with at least $4k$ fixed points is dominated by the number of permutations of order $m$ with more than $k^2$ $2$-cycles. Note that the density of such permutations is bounded by $1/(k^2)!$.

On the other hand let $\calC_0$ be a conjugacy class of elements of order $m$ having $o(n^{1/2})$ fixed points and $o(n)$ $2$-cycles. Write $a_j$ for the number of $j$-cycles. If $m$ has any small odd divisor, then we conclude by Lemma~\ref{good_m_calculation}. Otherwise, the elements of $\calC_0$ have $o(n)$ cycles. Therefore
\[
  \frac{|\calC_0|}{n!} = \frac{1}{\prod a_j! j^{a_j}} \geq \frac{1}{\prod (a_j j)^{a_j}} \geq n^{-o(n)}.
\]
Therefore we can choose $k = o(n^{1/2})$ so that $1/(k^2)! = o(|\calC_0| / n!)$. It follows that $\calC_0$ is much bigger than the set of all permutations with more than $k^2$ $2$-cycles, so again we find that almost all $\pi \in S_n$ have at most $4k = o(n^{1/2})$ fixed points and at most $k^2 = o(n)$ $2$-cycles.
\end{proof}

This completes the proof of Theorem~\ref{thm:fixed-order-almost-sure}.

\subsection{Theorem~\ref{thm:fixed-order-positive-prob}: Positive-probability generation} Now we investigate the conditions under which two random elements of order $m$ generate with positive probability. We are able to give a complete characterization of this property.

\begin{lemma} Let $m \in \ordn$.
\begin{enumerate}
\item Assume $m$ is odd, and assume there exists a permutation of order $m$ having $O(n^{1/2})$ fixed points. Then almost all permutations of order $m$ have the same property.
\item Assume $m \neq 2$ is even. Then almost all permutations of order $m$ have $O(n^{1/2})$ fixed points and $n/2-\Omega(n)$ $2$-cycles.
\end{enumerate}
\end{lemma}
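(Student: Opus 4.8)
Both parts follow the scheme of Lemmas~\ref{good_m_calculation} and~\ref{good_m_calculation_2}: produce one conjugacy class of order $m$ which is ``good'', and then use the injection of Lemma~\ref{easy_computation} — replacing fixed points (or, in part~(2), two-cycles) by longer cycles — to show that the ``bad'' classes together account for a vanishing fraction of all permutations of order $m$. For part~(1), let $q$ be the least prime factor of $m$; then $q\geq 3$. If $q\leq n^{1/2}$, apply Lemma~\ref{easy_computation} with $d=q$ and $k=\floor{n^{1/2}/q}\geq 1$: a class with $c_1\geq 2kq$ injects into one larger by the factor $\bigl((kq)^q/n\bigr)^k=n^{\Omega(1)}$, since $kq\gg n^{1/2}$ and $q\geq 3$; hence almost all permutations of order $m$ have $c_1<2kq=O(n^{1/2})$, and the hypothesis is not needed here. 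Suppose instead $q>n^{1/2}$. Then $m$ is squarefree (a divisor $p^2$ would exceed $n$, impossible in $S_n$), say $m=q_1\cdots q_r$ with $q_1=q$. Since any divisor of $m$ that is a product of two or more of the $q_i$ exceeds $n$, a permutation of order $m$ must contain a distinct $q_i$-cycle for each $i$, so $q_1+\cdots+q_r\leq n$ and $r<n^{1/2}$, and its cycle lengths lie in $\{1,q_1,\dots,q_r\}$; thus the number of cycle types of order $m$ is at most $\prod_i\floor{n/q_i}<n^{n^{1/2}/2}$. Now the hypothesis supplies a class $\calC_0$ with $c_1\leq Cn^{1/2}$; as it has fewer than $n/q<n^{1/2}$ nontrivial cycles, $|\calC_0|=n!/\prod_j a_j!\,j^{a_j}\geq n!\,e^{-(C/2+1+o(1))n^{1/2}\log n}$, whereas any class with $c_1=j$ has $|\calC|\leq n!/j!$, which for $j\geq C'n^{1/2}$ is at most $n!\,e^{-(C'/2)(1+o(1))n^{1/2}\log n}$. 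Taking $C'$ a sufficiently large constant, each such class is smaller than $|\calC_0|$ by a factor $e^{-\Omega(n^{1/2}\log n)}$ that outweighs the bound $n^{n^{1/2}/2}$ on the number of cycle types, so summing over them shows the permutations of order $m$ with $c_1\geq C'n^{1/2}$ form a $o(1)$ fraction. (Morally: once $q>n^{1/2}$ every fixed point costs a factor $\asymp n$ in the size of the class, so the classes with the fewest fixed points dominate, and by hypothesis that minimum is $O(n^{1/2})$.)

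For part~(2) there is no hypothesis, but $2\mid m$ takes its place. Starting from any permutation of order $m$ (one exists as $m\in\ordn$) and repeatedly replacing two fixed points by a two-cycle — which preserves the order — yields a permutation of order $m$ with at most one fixed point, so a good class exists automatically. For the fixed-point bound we iterate the reverse move: by Lemma~\ref{easy_computation} with $d=2$, a class with $c_1=j$ and $c_2\leq n/2$ two-cycles injects into one with $c_1=j-2$ larger by the factor $j(j-1)/(2(c_2+1))\geq 4$ once $j\geq 3n^{1/2}$; telescoping downwards shows $\sum_{j\geq 6n^{1/2}}\sum_{c_1(\calC)=j}|\calC|$ is an exponentially small fraction of the total, so almost all permutations of order $m$ have $c_1=O(n^{1/2})$ — because $d$ can be taken to be $2$, this avoids the delicacy of part~(1). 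For the two-cycle bound, fix $\delta\in(0,1/4)$. If $m$ has an odd prime factor $>\delta n$, then every permutation of order $m$ has a cycle of length $>\delta n$, hence fewer than $n/2-\Omega(n)$ two-cycles. Otherwise $m$ has an even divisor $e$ with $4\leq e\leq\delta n$ (namely $e=4$ if $4\mid m$, else $e=2p$ for the least odd prime $p\mid m$), and replacing $e/2\geq 2$ two-cycles by one $e$-cycle injects a class with $c_2\geq 2\delta n$ into one larger by a factor $\Omega(n)$, so such classes are negligible and again $c_2<2\delta n\leq n/2-\Omega(n)$ almost surely. With the criteria recorded just before the lemma, this finishes the proof of Theorem~\ref{thm:fixed-order-positive-prob}.

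The crux is the case $q>n^{1/2}$ of part~(1): one has to count cycle types finely enough that the $n^{-\Omega(n^{1/2})}$ gain from Lemma~\ref{easy_computation} actually beats their number — which is precisely what squarefreeness and the restriction of admissible cycle lengths to $\{1,q_1,\dots,q_r\}$ deliver — and then chase constants so that $C'$ can be chosen uniformly. Everything else is comparatively soft: in part~(1) with $q\leq n^{1/2}$, and throughout part~(2), the relevant injection already yields enough of a saving, iterated when necessary.
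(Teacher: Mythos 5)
Your argument is correct, and its skeleton --- comparisons of class sizes via Lemma~\ref{easy_computation}, with the hypothesised class $\calC_0$ brought in when the least divisor of $m$ is large --- is the paper's. The main difference is in what you call the crux. For part (1) with least prime factor $q>n^{1/2}$, the squarefreeness of $m$, the restriction of cycle lengths to $\{1,q_1,\dots,q_r\}$, and the bound $n^{n^{1/2}/2}$ on the number of admissible cycle types are all unnecessary: the paper simply bounds the number of \emph{all} permutations of $S_n$ (of whatever order) having at least $bn^{1/2}$ fixed points by $n!/(bn^{1/2})!=n!\,e^{-(b/2)(1+o(1))n^{1/2}\log n}$, and compares this directly with $|\calC_0|\geq n!\,n^{-O(n^{1/2})}$ (which only uses that every nontrivial cycle of $\calC_0$ has length at least $q$, so $\calC_0$ has $O(n^{1/2})$ cycles); taking $b$ large in terms of the constant for $\calC_0$ finishes the case with no enumeration of cycle types --- your class-by-class version with the extra factor $n^{n^{1/2}/2}$ still closes, it is just more work. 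In part (2) the paper's two-cycle bound is organised differently: with $d$ the least divisor of $m$ exceeding $2$ (the case $d=o(n^{1/2})$ being Lemma~\ref{good_m_calculation}), one swap of $d$ two-cycles for two $d$-cycles gives $c_2\leq 2d$ almost surely, and since every element of order $m$ has a cycle of length at least $d$, one gets $c_2\leq\min\{2d,(n-d)/2\}\leq 2n/5$; your split on whether $m$ has an odd prime factor exceeding $\delta n$, with the swap of $e/2$ two-cycles for one $e$-cycle otherwise, reaches the same conclusion. Two harmless slips: in the case $e=2p$ you only have $e\leq 2\delta n$, not $e\leq\delta n$ (the injection still gives a factor $\Omega(n)$), and the observation that a good class ``exists automatically'' in part (2) is never used, since your argument there is pure injection. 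Your iterated $k=1$ swap for the fixed points in (2) is a fine substitute for the paper's single application of Lemma~\ref{easy_computation} with $d=2$, $k=\lfloor n^{1/2}\rfloor$.
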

\begin{proof}
(1) Let $d$ denote the smallest nontrivial divisor of $m$. If $d = o(n^{1/2})$, we conclude by Lemma~\ref{good_m_calculation}. Assume then $d = \Omega(n^{1/2})$. As in the proof of Lemma~\ref{good_m_calculation_2}, the density of permutations having at least $bn^{1/2}$ fixed points (for some constant $b$) is at most $1/(bn^{1/2})!$. Moreover, if $\calC_0$ is a conjugacy class of elements of order $m$ having $O(n^{1/2})$ fixed points, then the elements of $\calC_0$ have $O(n^{1/2})$ cycles, and
\[
 \frac{|\calC_0|}{n!} \geq n^{-O(n^{1/2})}.
\]
Thus if $b$ is a sufficiently large constant then this dominates, so indeed almost all elements of order $m$ have $O(n^{1/2})$ fixed points.

(2) Let $d$ denote the smallest divisor of $m$ larger than $2$. Note that $d$ is either $4$ or prime. As above, we may assume $d = \Omega(n^{1/2})$. The argument of Lemma~\ref{good_m_calculation} (with $k=1$) shows that almost all elements of order $m$ have at most $2d$ $2$-cycles. On the other hand any element of order $m$ must have a cycle of length at least $d$. Thus almost all permutations have at most $\min \{2d, (n-d)/2\}$ $2$-cycles. This is at most $2n/5 = n/2-\Omega(n)$, so the statement on $2$-cycles is proved.

Regarding fixed points, we apply Lemma~\ref{easy_computation} with $d=2$ and $k=\floor{n^{1/2}}$, getting
\[
 \frac{|\calC|}{|\calC '|} \leq \left(\frac{n}{4\floor{n^{1/2}}^2}\right)^{\floor{n^{1/2}}} = o(1).
\]
This shows that almost all elements of $S_n$ of order $m$ have at most $4n^{1/2}$ fixed points, and the proof is concluded.
\end{proof}

Theorem~\ref{thm:fixed-order-positive-prob} follows immediately from this and Theorem~\ref{main-thm}.

\bibliography{refs}
\bibliographystyle{alpha}

\end{document}